\newtheorem{definition}{Definition}
\newtheorem{theorem}{Theorem}
\newtheorem{proposition}{Proposition}[section]
\newtheorem{corollary}[proposition]{Corollary}
\newtheorem{lemma}[proposition]{Lemma}
\theoremstyle{break} 
\newenvironment{proof}%
{{\par\noindent \bf Proof. \nobreak}}%
{\nobreak \removelastskip \nobreak \hfill $\Box$ \medbreak}
{{\par\noindent \bf Proof \nobreak}}%
{\nobreak \removelastskip \nobreak \hfill $\Box$ \medbreak}
{{\par\noindent \bf Proof lemma. \nobreak}}%
{\nobreak \removelastskip \nobreak \bf End proof lemma. \medbreak}
\newenvironment{remark}{\par \medskip \noindent {\bf Remark. }\nobreak}{\par \medskip}
\def\paragraph#1{{\bf #1\ }}
\newcommand{\expo}{\mathrm{e}}
\newcommand{\dd}{\mathrm{d}}
\newcommand{\R}{{\mathbb{R}}}
\newcommand{\eps}{\varepsilon}
\newcommand{\emp}{\mathrm{emp}}
\newcommand{\DD}{\mathrm{D}}
\newcommand{\KL}{\mathrm{KL}}
\newcommand{\HH}{\mathrm{H}}
\newcommand{\MM}{\mathrm{M}}
\newcommand{\overbar}[1]{\mkern 1.5mu\overline{\mkern-1.5mu#1\mkern-1.5mu}\mkern 1.5mu}
\def\Proof{\noindent{\bf Proof}\quad}
\def\qed{\hfill$\square$\smallskip}
\title{From interacting agents to Boltzmann-Gibbs distribution of money}
\author{Pierre-Emmanuel Jabin \footnotemark[1] \and Fei Cao\footnotemark[2]}
\begin{document}
\maketitle

\footnotetext[1]{Pennsylvania State University - Department of Mathematics and Huck Institutes, State College, PA 16802, USA}
\footnotetext[2]{University of Massachusetts Amherst - Department of Mathematics and Statistics, Amherst, MA 01003, USA}

\tableofcontents

\begin{abstract}
We investigate the unbiased model for money exchanges: agents give at random time a dollar to one another (if they have one). Surprisingly, this dynamics eventually leads to a geometric distribution of wealth (shown empirically by Dragulescu and Yakovenko in \cite{dragulescu_statistical_2000} and rigorously in \cite{cao_derivation_2021,graham_rate_2009,lanchier_rigorous_2017,merle_cutoff_2019}). We prove a uniform-in-time propagation of chaos result as the number of agents goes to infinity, which links the stochastic dynamics to a deterministic infinite system of ordinary differential equations. This deterministic description is then analyzed by taking advantage of several entropy-entropy dissipation inequalities and we provide a quantitative almost-exponential rate of convergence toward the equilibrium (geometric distribution) in relative entropy.
\end{abstract}

\noindent {\bf Key words: Econophysics, Agent-based model, Propagation of chaos, Relative entropy, Logarithmic Sobolev inequalities}

\section{Introduction}
\setcounter{equation}{0}

\subsection{Background and main results}
In this work, we consider a simple mechanism for money exchange in a closed economical system, meaning that there are a fixed number of agents, denoted by $N$, with an (fixed) average number of dollars $\mu \in \mathbb{N}_+$. We denote by $S_i(t)$ the amount of dollars the agent $i$ has at time $t$. Since it is a closed economical system, we have:
\begin{equation}\label{eq:preserved_sum}
S_1(t)+ \cdots +S_N(t) = N\mu = \textrm{Constant} \qquad \text{for all } t\geq 0.
\end{equation}
Specifically, we consider the model proposed in \cite{dragulescu_statistical_2000}: at random time (generated by an exponential law), an agent $i$ is picked uniformly at random and if it has one dollar (i.e. $S_i\geq 1$) it will give it to another agent $j$ picked uniformly at random. If $i$ does not have one dollar (i.e. $S_i= 0$), then nothing happens. This model is termed as the \textbf{unbiased exchange model} in \cite{cao_derivation_2021} (as all the agents are being picked with equal probability) and can be represented by \eqref{unbiased_exchange}.
\begin{equation}
\label{unbiased_exchange}
\textbf{unbiased exchange:} \qquad   (S_i,S_j)~ \begin{tikzpicture} \draw [->,decorate,decoration={snake,amplitude=.4mm,segment length=2mm,post length=1mm}]
(0,0) -- (.6,0); \node[above,red] at (0.3,0) {\small{$λ$}};\end{tikzpicture}~  (S_i-1,S_j+1) \quad (\text{if } S_i\geq 1).
\end{equation}
In other words, any agents with at least one dollar gives to all of the others agents at a fixed rate. In order to have the correct asymptotic as $N→+∞$ (so that the rate of a typical agent giving a dollar per unit time is of order $1$), we need to adjust the rate $λ$ (more exactly $λ\mathbbm{1}_{[1\!,+∞)}(S_i)$) by normalizing by $N$.

\begin{figure}[!htb]
  \centering
  \includegraphics[scale = 0.7]{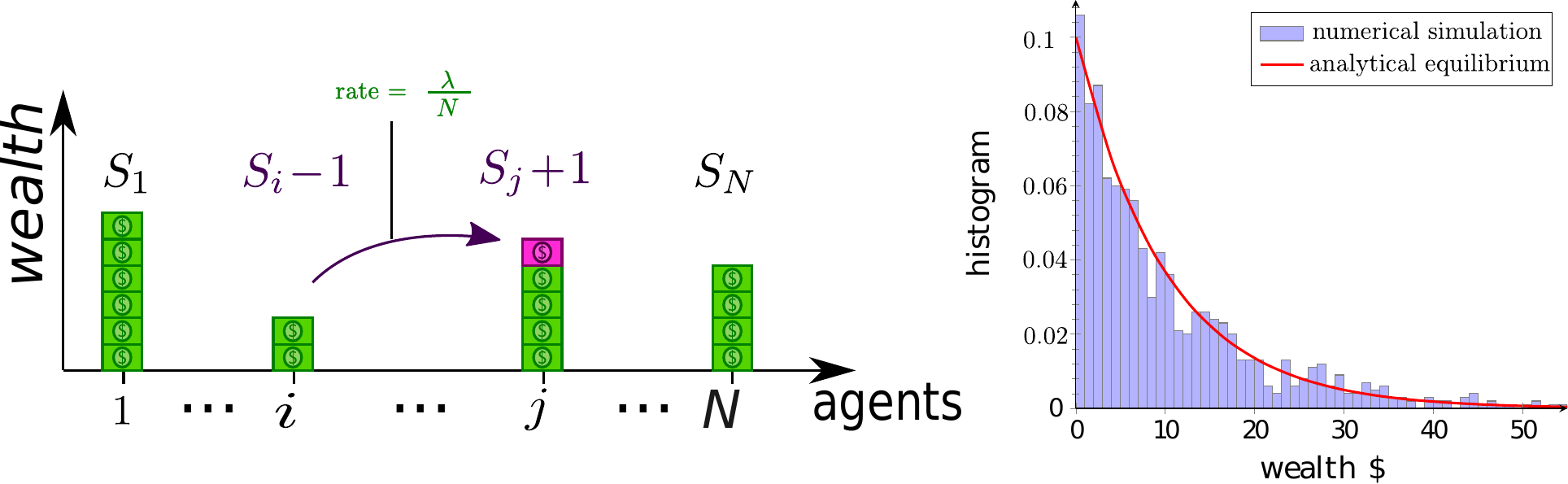}
  \caption{{\bf Left:} Illustration of the unbiased exchange model: at random time, one dollar is passed from a ``giver'' $i$ to a ``receiver'' $j$ at a fixed rate as long as the ``giver'' $i$ has at least a dollar. {\bf Right:} The distribution of wealth for the unbiased dynamics after 50, 000 steps with the average wealth per agent $\mu = 10$, this distribution is well-approximated by a exponential distribution with mean value $\mu = 10$.}
  \label{fig:illustration_model}
\end{figure}

We illustrate the dynamics in figure \ref{fig:illustration_model}-left. The fundamental question of interest is the exploration of the limiting money distribution among the agents as the total number of agents and time become large. We illustrate numerically in figure \ref{fig:illustration_model}-right the unbiased exchange dynamics using $N=1000$ agents. Notice that the wealth distribution is approximately exponential with the proportion of agents decaying as wealth increases.

If we denote by ${\bf p}(t)=\left(p_0(t),p_1(t),\ldots\right)$ the law of the process ${S}_1(t)$ as $N \to \infty$, i.e. $p_n(t) = \lim\limits_{N \to \infty} \mathbb P\left(S_1(t) = n\right)$, its time evolution is given by:
\begin{equation}
  \label{eq:law_limit}
  \frac{\dd}{\dd t} {\bf p}(t) = λ \,\mathcal{L}[{\bf p}(t)]
\end{equation}
with:
\begin{equation}
  \label{eq:L}
  \mathcal{L}[{\bf p}]_n:= \left\{
    \begin{array}{ll}
      p_1-\overbar{r}\,p_0 & \quad \text{if } n=0 \\
      p_{n+1}+\overbar{r}\,p_{n-1}- (1+\overbar{r})p_n & \quad \text{for } n \geq 1
    \end{array}
  \right.
\end{equation}
and $\overbar{r}=1-p_0$. Throughout this manuscript, we will put $\lambda = 1$ without any loss of generality. The link between the stochastic $N$-agents dynamics \eqref{unbiased_exchange} and the infinite system of ODEs \eqref{eq:law_limit} as $N \to \infty$ is refereed to as \emph{propagation of chaos} \cite{sznitman_topics_1991} and has been rigorously justified in this setting, see for instance \cite{cao_derivation_2021,graham_rate_2009,merle_cutoff_2019}. To be more precise, the work of \cite{cao_derivation_2021,graham_rate_2009,merle_cutoff_2019} only prove the validity of the evolution equation \eqref{eq:law_limit} for $t \in [0,T]$ with $T$ being arbitrary but fixed. In this work, we prove a uniform-in-time propagation of chaos for this mean-field dynamics, which refines earlier short time result on the propagation of chaos phenomenon.


\begin{remark}
The equation \eqref{eq:law_limit} can be seen as a jump process with loss and gain:
  \begin{itemize}
  \item \underline{gain} at $S_1$: two sources, the `$S_1+1$' that give a dollar and the `$S_1-1$' that receive a dollar ($S_1$ has to be greater than or equal to $1$ in that case)
    \begin{equation*}
      S_1+1 \to S_1 \quad \text{and} \quad S_1-1  \to S_1 \;\;\text{if}\quad S_1 \geq 1.
    \end{equation*}
  \item \underline{loss} at $S_1$, the `$S_1$' that give or receive a dollar
    \begin{equation*}
      S_1 \to S_1-1 \;\; \text{if } S_1\geq 1 \quad \text{and} \quad S_1 \to S_1+1.
    \end{equation*}
  \end{itemize}
\end{remark}

\begin{figure}[!htb]
\centering
\includegraphics[scale=1.0]{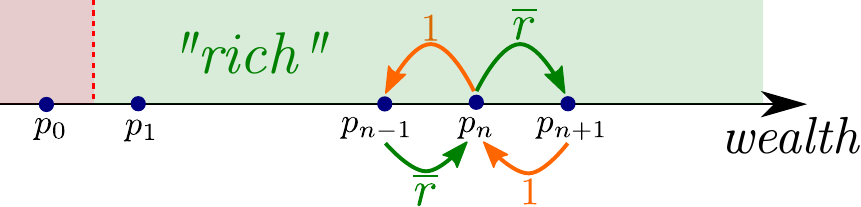}
\caption{Schematic illustration of the limiting ODE system \eqref{eq:law_limit} with $\lambda :=1$.}
\end{figure}

After we achieved the transition from the interacting system of agents \eqref{unbiased_exchange} to the deterministic system of nonlinear ODEs \eqref{eq:law_limit}, the natural follow-up step is to investigate the system \eqref{eq:law_limit} with the intention of proving convergence of solution of \eqref{eq:law_limit} to its (unique) equilibrium solution, see figure \ref{fig:scheme_sketch} for the general philosophy of the strategy, which has also been employed in \cite{cao_derivation_2021,graham_rate_2009,merle_cutoff_2019}.

\begin{figure}[!htb]
\centering
\includegraphics[scale = 0.7]{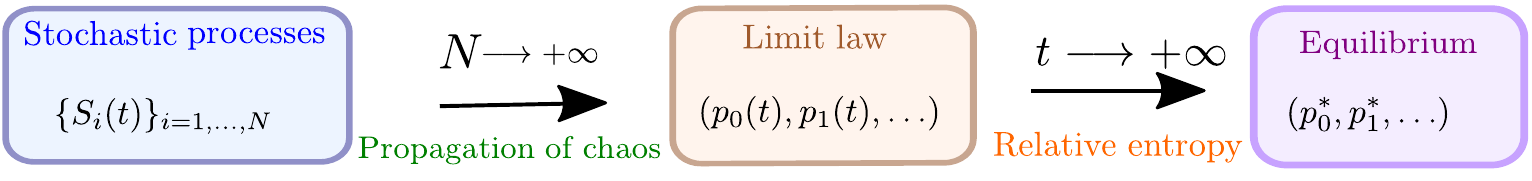}
\caption{Schematic illustration of the strategy of the limiting procedures.}
\label{fig:scheme_sketch}

\end{figure}

We quickly sketch some elementary observations concerning the ODE system \eqref{eq:law_limit} and refer interested readers to \cite{cao_derivation_2021,graham_rate_2009,merle_cutoff_2019} for detailed proofs and discussions.

\begin{lemma}\label{invariant}
If~${\bf p}(t)=\{p_n(t)\}_{n\geq 0}$ is the (unique) solution of \eqref{eq:law_limit}, then
\begin{equation}
\label{eq:conservation_mass_mean_value}
\sum_{n=0}^\infty \mathcal{L}[{\bf p}]_n =0,~~\textrm{and}~~ \sum_{n=0}^\infty n\,\mathcal{L}[{\bf p}]_n =0.
\end{equation}
In particular, we have ${\bf p}(t) \in V_\mu$ for all $t\geq 0$, where \[V_\mu:=\biggl\{\,{\bf p} \mathrel{\Big|} p_n \geq 0,~\sum_{n=0}^\infty p_n =1,~\sum_{n=0}^\infty n\,p_n =\mu \,\biggr\}\] is the space of probability mass functions on $\mathbb{N}$ (denoted by $\mathcal{P}(\mathbb N)$) with the prescribed mean value $\mu$. Moreover, the (unique) equilibrium distribution ${\bf p}^*=\{p^*_n\}_n$ in $V_\mu$ associated with \eqref{eq:law_limit} is given by:
\begin{equation}
\label{eqn:equil_limit}
p^*_n = p^*_0(r^*)^n,\quad n\geq 0,
\end{equation}
where $p^*_0 = 1 - r^* = \frac{1}{1+\mu}$ if we put initially that $\sum_{n=0}^\infty n\,p_n(0)=\mu$ for some $\mu \in \mathbb N_+$.
\end{lemma}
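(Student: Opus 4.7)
The plan is to verify the two conservation identities by direct rearrangement of the series defining $\mathcal{L}[\mathbf{p}]$, then deduce $\mathbf{p}(t)\in V_\mu$ by integrating in time, and finally solve the stationary equation $\mathcal{L}[\mathbf{p}^*]=0$ as a linear second-order recurrence to identify the equilibrium.

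For the conservation of mass, I would write
$$
\sum_{n=0}^\infty \mathcal{L}[\mathbf{p}]_n = (p_1-\overbar{r}\,p_0) + \sum_{n=1}^\infty \bigl(p_{n+1}+\overbar{r}\,p_{n-1}-(1+\overbar{r})p_n\bigr),
$$
re-index the three inner sums (shifting $n\mapsto n\pm 1$), and observe that the factors of $\overbar{r}$ and of $1$ collapse by telescoping to zero. For the mean, I would run the same telescoping after multiplying by $n$; here one gets terms like $\sum n(p_{n+1}-p_n)$ and $\overbar{r}\sum n(p_{n-1}-p_n)$, each of which telescopes to $-\sum_{n\geq 1}p_n=-\overbar{r}$ and to $+\overbar{r}$ respectively, so the two contributions cancel exactly. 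The only non-routine point is justifying the rearrangement in the second identity, which requires finiteness of $\sum n\,p_n$ and also a vanishing boundary term $\lim_{N\to\infty}N\,p_N=0$. I would absorb this by noting that any solution of \eqref{eq:law_limit} starting from an initial datum with finite mean preserves summability of $n\,p_n$ (either by a standard a priori estimate on the first moment of the ODE system, or by appealing to the probabilistic representation in terms of the $N$-agent model and Fatou's lemma).

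Once both identities hold pointwise in $t$, the two ODEs $\frac{d}{dt}\sum p_n(t)=0$ and $\frac{d}{dt}\sum n\,p_n(t)=0$ immediately give $\sum p_n(t)=1$ and $\sum n\,p_n(t)=\mu$ for every $t\geq 0$, which is the statement $\mathbf{p}(t)\in V_\mu$; non-negativity of $p_n(t)$ follows from the fact that $\mathcal{L}$ has non-negative off-diagonal entries (standard Kolmogorov-type argument, or again from the probabilistic representation).

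For the equilibrium, setting $\mathcal{L}[\mathbf{p}^*]=0$ gives the two conditions $p_1^*=\overbar{r}^*\,p_0^*$ and, for $n\geq 1$,
$$
p_{n+1}^* - p_n^* = \overbar{r}^*\bigl(p_n^*-p_{n-1}^*\bigr),
$$
which is a linear recurrence with characteristic roots $1$ and $\overbar{r}^*$. The general bounded solution has the form $p_n^*=A+B(\overbar{r}^*)^n$, and summability of $\mathbf{p}^*$ forces $A=0$; the boundary relation at $n=0$ is then automatically satisfied and $B=p_0^*$, yielding $p_n^*=p_0^*(r^*)^n$ with $r^*=\overbar{r}^*=1-p_0^*$. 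Imposing the mean constraint $\sum n\,p_n^*=\mu$ gives the geometric series identity $r^*/(1-r^*)=\mu$, hence $p_0^*=1/(1+\mu)$ and uniqueness in $V_\mu$ follows. The only delicate step here is ruling out the unbounded mode $A\neq 0$, which I would handle cleanly by invoking $\sum p_n^*<\infty$.
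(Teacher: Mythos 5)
Your proof is correct and follows the standard direct-computation route. Note that the paper itself does not supply a proof of this lemma: it explicitly defers to the references \cite{cao_derivation_2021,graham_rate_2009,merle_cutoff_2019} for ``detailed proofs and discussions.'' So there is no in-paper argument to compare against; you are filling a gap the authors chose to leave open.

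A few remarks on the substance. Your telescoping for mass conservation is fine. For the mean, writing $\mathcal{L}[\mathbf{p}]_n=(p_{n+1}-p_n)+\overbar{r}(p_{n-1}-p_n)$ for $n\geq 1$ and summing against $n$ indeed gives $-\overbar{r}+\overbar{r}=0$; you are right that one must justify the rearrangement, and invoking finiteness of the first moment together with the vanishing boundary term $N p_N\to 0$ (which follows from summability of $n\,p_n$) is an appropriate way to do it. One small point worth making explicit: the generator $\mathcal{L}$ depends on $\mathbf{p}$ through $\overbar{r}$, so the ``non-negative off-diagonal entries'' argument for positivity is really applied to the linear time-dependent system obtained by freezing $\overbar{r}(t)\geq 0$ along the given solution; this is standard but deserves a sentence. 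In the equilibrium step, you have two independent mechanisms forcing $A=0$ in $p_n^*=A+B(\overbar{r}^*)^n$: summability, and the boundary equation $p_1^*=\overbar{r}^*p_0^*$ combined with $\overbar{r}^*<1$. Either suffices, though strictly speaking you should also rule out $\overbar{r}^*=1$ (double root, $p_n^*=A+Bn$), which again summability does. Overall the proposal is a complete, correct, and self-contained proof at the level of rigor appropriate for this lemma.
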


\begin{remark}
If $\mu = 0$, then the geometric equilibrium distribution \eqref{eqn:equil_limit} reduces to $p^*_0 = 1$ and $p^*_n = 0$ for all $n\geq 1$. Indeed, since nobody can give (as there are no ``rich'' people), the distribution ${\bf p}$ is not evolving. If the average number of dollars per agent $\mu$ is large, i.e., if $\mu \gg 1$, the unique equilibrium distribution ${\bf p}^*$ is well-approximated by a Boltzmann-Gibbs (exponential) distribution with mean value $\mu$, as \[p^*_n = \tfrac{1}{\mu}\,\expo^{-(n+1)\,\log(1+\frac{1}{\mu})} \approx \tfrac{1}{\mu}\,\expo^{-\frac{n}{\mu}}, \quad \forall\,n\geq 0 .\]
\end{remark}

Next, we recall the definition of entropy and relative entropy \cite{cover_elements_1999}, which will play a major role in the analysis of the large time behavior of the system \eqref{eq:law_limit}.

\begin{definition}
For a given probability mass function ${\bf p} \in V_\mu$, the entropy of ${\bf p}$ is defined via
\begin{equation}\label{entropy}
\HH[{\bf p}]=\sum_n p_n \log p_n.
\end{equation}
The relative entropy (or Kullback-Leiber divergence) from $\mathbf{p}^*$ to ${\bf p}$ is given by
\begin{equation}\label{relative_entropy}
\DD_{\KL} \left({\bf p}~||~{\bf p}^* \right)= \sum_{n=0}^\infty p_n\,\log \frac{p_n}{p^*_n}.
\end{equation}
\end{definition}

To prove convergence of the solution of \eqref{eq:law_limit} to its equilibrium solution \eqref{eqn:equil_limit}, a key observation is to realize that the original ODE dynamics \eqref{eq:law_limit} can be reformulated as a variant of a Fokker-Planck equation. Indeed, one has the following integration by parts formula:
\begin{lemma}
Let $\{p_n(t)\}_{n\geq 0}$ be the solution to \eqref{eq:law_limit} and $\varphi \colon \mathbb R \to \mathbb R$ to be a continuous function, then
\begin{equation}\label{IBP}
\sum_{n=0}^\infty p'_n\,\varphi(n)=\sum_{n=0}^\infty (\overbar{r}p_n-p_{n+1})\big(\varphi(n+1)-\varphi(n)\big).
\end{equation}
\end{lemma}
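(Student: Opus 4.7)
The plan is to exhibit \eqref{IBP} as a discrete summation-by-parts (Abel) identity, after first rewriting the operator $\mathcal{L}$ in a divergence form. To this end, I would introduce the net flux from state $n$ to state $n+1$,
\[
J_n := \overbar{r}\,p_n - p_{n+1}, \qquad n\geq 0,
\]
together with the boundary convention $J_{-1}:=0$, and verify by direct substitution that $\mathcal{L}[\mathbf{p}]_n = J_{n-1} - J_n$ for every $n\geq 0$. The case $n=0$ matches the first line of \eqref{eq:L} precisely because $J_{-1}=0$ (reflecting the absence of a state $-1$), and the case $n\geq 1$ follows from an elementary cancellation. This is the discrete analogue of recasting a transport equation in the form $\partial_t p = -\partial_x J$.

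Once the divergence form is in hand, using $p_n' = \mathcal{L}[\mathbf{p}]_n = J_{n-1} - J_n$, I would split the LHS of \eqref{IBP} as
\[
\sum_{n=0}^\infty p_n'\,\varphi(n) = \sum_{n=0}^\infty J_{n-1}\,\varphi(n) - \sum_{n=0}^\infty J_n\,\varphi(n),
\]
and re-index the first sum by $m=n-1$. The would-be boundary term $J_{-1}\varphi(0)=0$ vanishes by the convention, and the two remaining sums combine to give $\sum_{n=0}^\infty J_n\bigl(\varphi(n+1)-\varphi(n)\bigr)$, which is exactly the desired RHS.

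The only real issue is justifying these rearrangements of potentially infinite series, which requires absolute summability. Since $\mathbf{p}(t)\in V_\mu$ has finite first moment by Lemma~\ref{invariant}, this is immediate whenever $\varphi$ has at most linear growth. For more general continuous $\varphi$ one can truncate at $n\leq N$, verify the identity for the resulting finite sums (where no convergence issue arises), and pass to the limit using that $n\,p_n \to 0$ forces the boundary contribution $J_N\,\varphi(N)$ to vanish as long as $\varphi$ does not grow too fast. I do not anticipate any deep obstacle; the main thing to watch is the boundary behaviour at $n=0$, where \eqref{eq:L} changes form, and which is absorbed cleanly by the convention $J_{-1}=0$.
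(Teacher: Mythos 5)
Your proposal is correct and is the natural argument; the paper in fact states this lemma without proof (it is a routine discrete summation-by-parts), and your computation — rewriting $\mathcal{L}[\mathbf p]_n = J_{n-1}-J_n$ with $J_n=\overbar{r}\,p_n-p_{n+1}$ and $J_{-1}=0$, then re-indexing — is exactly what is being taken for granted. Your remark about absolute summability and the boundary term $J_N\varphi(N)$ is a reasonable extra precaution; in the paper's actual use the identity is applied with $\varphi(n)=\log p_n$, so one should note that the needed decay/integrability there ultimately rests on the moment and exponential-moment bounds established elsewhere (Lemma~\ref{invariant}, Corollary~\ref{cor1}) rather than on continuity of $\varphi$ alone.
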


\noindent In particular, inserting $\varphi(n)=\log p_n$ yields the entropy dissipation result.

\begin{lemma}[\textbf{Entropy dissipation}]
Let ${\bf p}(t)=\{p_n(t)\}_{n\geq 0}$ be the solution to \eqref{eq:law_limit} and $\HH[{\bf p}]$ be the associated entropy, then for all $t>0$,
\begin{equation}\label{eq:entropy_dissipation}
\begin{aligned}
\frac{\dd}{\dd t}\HH[{\bf p}] &=  -\sum_{n=0}^\infty (\tilde{p}_{n+1} - p_{n+1})\left(\log(\tilde{p}_{n+1}) - \log(p_{n+1})\right) \\
&= -\mathrm{D}_{\mathrm{KL}} \left( {\bf p}~||~\mathbf{\tilde{p}} \right) - \DD_{\KL} \left({\bf \tilde{p}}~||~{\bf p} \right) \leq 0,
\end{aligned}
\end{equation}
where ${\bf \tilde{p}}:=\{\tilde{p}_n\}_{n\geq 0}$ is defined by $\tilde{p}_0=p_0$ and $\tilde{p}_n=\overbar{r}p_{n-1}$ for $n \geq 1$.
\end{lemma}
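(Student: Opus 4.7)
The plan is to differentiate the entropy in time, invoke the integration-by-parts formula \eqref{IBP} with the test function $\varphi(n)=\log p_n$, and then rearrange the resulting expression so that the auxiliary sequence $\tilde{\mathbf p}$ appears naturally.

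First I would write
\[
\frac{\dd}{\dd t}\HH[{\bf p}] \;=\; \sum_{n\geq 0} p_n'\,(\log p_n + 1) \;=\; \sum_{n\geq 0} p_n'\,\log p_n,
\]
where the ``$+1$'' term drops out because mass conservation \eqref{eq:conservation_mass_mean_value} gives $\sum_n p_n' = 0$. Then I would apply \eqref{IBP} with $\varphi(n)=\log p_n$ (which is admissible on the support of ${\bf p}$, the $p_n=0$ terms contributing nothing) to obtain
\[
\frac{\dd}{\dd t}\HH[{\bf p}] \;=\; \sum_{n\geq 0}\bigl(\overbar{r}\,p_n - p_{n+1}\bigr)\bigl(\log p_{n+1}-\log p_n\bigr).
\]

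The second step is to recognize that $\overbar{r}\,p_n = \tilde p_{n+1}$ by definition of $\tilde{\mathbf p}$, so the prefactor is exactly $\tilde p_{n+1}-p_{n+1}$. To turn the $\log$-difference into $\log \tilde p_{n+1} - \log p_{n+1}$, I would use $\log\tilde p_{n+1} = \log \overbar{r} + \log p_n$, which yields
\[
\log p_{n+1} - \log p_n \;=\; -\bigl(\log \tilde p_{n+1} - \log p_{n+1}\bigr) + \log \overbar{r}.
\]
The constant $\log\overbar{r}$ is harmless: since $\sum_{n\geq 0}\tilde p_{n+1} = \overbar{r}\sum_{n\geq 0}p_n = \overbar{r}$ and $\sum_{n\geq 0}p_{n+1} = 1-p_0 = \overbar{r}$, one has $\sum_{n\geq 0}(\tilde p_{n+1}-p_{n+1})=0$, so that term vanishes upon summation. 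This leaves exactly
\[
\frac{\dd}{\dd t}\HH[{\bf p}] \;=\; -\sum_{n\geq 0}\bigl(\tilde p_{n+1}-p_{n+1}\bigr)\bigl(\log\tilde p_{n+1}-\log p_{n+1}\bigr),
\]
which is the first equality in \eqref{eq:entropy_dissipation}.

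For the second equality, I would note that $\tilde p_0 = p_0$ means the $n=0$ terms of both $\DD_{\KL}({\bf p}\,||\,\tilde{\mathbf p})$ and $\DD_{\KL}(\tilde{\mathbf p}\,||\,{\bf p})$ vanish, and then expand $(\tilde p_{n+1}-p_{n+1})(\log\tilde p_{n+1}-\log p_{n+1})$ into four terms and regroup to recover precisely $\DD_{\KL}({\bf p}\,||\,\tilde{\mathbf p}) + \DD_{\KL}(\tilde{\mathbf p}\,||\,{\bf p})$. The sign then follows from the elementary inequality $(a-b)(\log a - \log b)\geq 0$ for $a,b>0$, equivalently from the non-negativity of each relative entropy (both are well-defined since $\tilde{\mathbf p}$ is itself a probability distribution, as $\sum_{n\geq 0}\tilde p_n = p_0+\overbar{r}=1$).

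The only real subtlety — and the main thing to check carefully — is the cancellation of the $\log\overbar{r}$ constant and the fact that $\tilde{\mathbf p}$ is an honest probability measure (so that the two KL divergences are literally the ones defined in \eqref{relative_entropy}). The manipulations are otherwise mechanical, and absolute convergence of the series is straightforward if one assumes enough moments on the initial data (which is natural in view of Lemma \ref{invariant}).
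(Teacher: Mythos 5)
Your proposal is correct and follows exactly the route the paper indicates: differentiate the entropy, drop the $+1$ via mass conservation, apply the integration-by-parts formula \eqref{IBP} with $\varphi(n)=\log p_n$, identify $\overbar{r}p_n=\tilde p_{n+1}$, and observe that the constant $\log\overbar{r}$ cancels because $\sum_{n\geq 0}(\tilde p_{n+1}-p_{n+1})=\overbar{r}-\overbar{r}=0$. The paper only sketches this in one sentence (``inserting $\varphi(n)=\log p_n$ yields the entropy dissipation result''), so your fleshed-out version — including verifying that $\tilde{\mathbf p}$ is a genuine probability measure and that the $n=0$ terms of both KL divergences vanish because $\tilde p_0=p_0$ — is a faithful and complete rendering of the intended argument.
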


\begin{remark}
If we notice that $\frac{\dd}{\dd t} \sum_{n=0}^\infty p_n\log p^*_n = 0$ for all $t \geq 0$ along the flow of the solution to \eqref{eq:law_limit}, we can rewrite the entropy dissipation \eqref{eq:entropy_dissipation} as
\begin{equation}\label{eq:entropy_dissipation_rewrite}
\frac{\dd}{\dd t} \DD_{\KL}({\bf p}~||~{\bf p}^*) = -\mathrm{D}_{\mathrm{KL}} \left( {\bf p}~||~{\bf \tilde{p}} \right) - \DD_{\KL} \left({\bf \tilde{p}}~||~{\bf p} \right) \leq 0.
\end{equation}
\end{remark}

\noindent It has been shown in \cite{cao_derivation_2021,graham_rate_2009,merle_cutoff_2019} that we have ${\bf p}(t) \xrightarrow{t \to \infty} {\bf p}^*$ for each ${\bf p}(0) \in V_\mu$. Moreover, if the initial configuration of the stochastic system is $S_i(0) = \mu$ for all $1\leq i\leq N$ (which implies that the components of ${\bf p}(0)$ are all zero except a $1$ at its $(\mu+1)$-th spot), Theorem 1.2 in \cite{graham_rate_2009} showed that
\begin{equation}\label{eq:previous_result}
\limsup\limits_{t\to \infty} t^{-1}\log \DD_{\KL}\left({\bf p}(t)~||~{\bf p}^*\right) = -\Omega(\mu^{-2}),
\end{equation}
in which the notation $\Omega(\mu^{-2})$ is interpreted as follows: Given positive functions $f,g \colon \R_+ \to \R_+$, we say that $f = \Omega(g)$ if $f \geq c\,g$ for a constant $c > 0$. Unfortunately (at least from the perspective of the authors of this work), both \cite{graham_rate_2009} and \cite{merle_cutoff_2019} rely heavily on the particle system to get various a priori estimates that will be eventually leveraged to analyze the ODE system \eqref{eq:law_limit} (where everything is deterministic), so in the end their entropy dissipation analysis is \emph{not} independent of the investigation of the underlying stochastic particle systems \eqref{unbiased_exchange}. One of the main contributions of the present work is the derivation a entropy-entropy dissipation inequality (for general datum), relying solely on the nonlinear ODE system \eqref{eq:law_limit} itself, so that the ODE analysis part can be completely separated from the stochastic analysis part. In this regard, our approach will be accessible and beneficial to broader audiences. In particular, we will prove in Theorem \ref{thm2} that
\begin{equation}\label{eq:main_result_1}
\DD_{\KL}({\bf p}(t)~||~{\bf p}^*) \leq C\expo^{-C\sqrt{t}} \quad \forall\,t \geq 0
\end{equation}
holds for general initial datum ${\bf p}(0)$, where $C > 0$ is some fixed constant.

Although we will only investigate a specific binary exchange models in the present work, other exchange rules can also be imposed and studied, leading to different models. To name a few, the so-called immediate exchange model introduced in \cite{heinsalu_kinetic_2014} assumes that pairs of agents are randomly and uniformly picked at each random time, and each of the agents transfer a random fraction of its money to the other agents, where these fractions are independent and uniformly distributed in $[0,1]$. The so-called uniform reshuffling model investigated in \cite{dragulescu_statistical_2000,lanchier_rigorous_2018} suggests that the total amount of money of two randomly and uniformly picked agents possess before interaction is uniformly redistributed among the two agents after interaction. For closely related variants of the unbiased exchange model we refer to the recent work \cite{cao_derivation_2021}. For other models arising from econophysics, see \cite{chakraborti_statistical_2000, chatterjee_pareto_2004, lanchier_rigorous_2018-1,cao_explicit_2021}.

To the best of our knowledge, uniform in time propagation of chaos for other models coming from econophysics has only been studied in \cite{cortez_quantitative_2016,cortez_particle_2018} which include the uniform reshuffling model and the immediate exchange model as special cases. The pillars of the argument employed in \cite{cortez_quantitative_2016,cortez_particle_2018} is an ``optimal-coupling'' type argument, which can be dated back to 1970s \cite{murata_propagation_1977,tanaka_probabilistic_1978} and relies on a stochastic differential equation representation of the particle system in terms of Poisson random measures as well as a tricky application of the existence of a optimal coupling between the empirical measure and the law of the ``limit object''. Our approach to uniform in time propagation of chaos for the unbiased exchange model at hand will be based on a careful study of the entropy-entropy dissipation relation, at the level of the $N$-agent system as well as its limiting ODE system. In particular, our main theorem is stated as follows:
\begin{theorem}\label{thm4}
Let ${\bf q} = (q_0,q_1,\ldots)$ to be the individual coordinates of the empirical measure of the unbiased exchange dynamics, i.e., $q_n = \#\{i\in \mathbb N \mid S_i = n\} / N$ for all $n \in \mathbb N$. For all $t > 0$ and all $N \geq 1$, we have that
\begin{equation}\label{eq:L1chaos}
\mathbb{E}\,\|{\bf q}(t) - {\bf p}(t)\|^2_{\ell^1} \leq \frac{C}{1+\log N},
\end{equation}
in which ${\bf p}(t)=\{p_n(t)\}_{n\geq 0}$ is the unique solution of ODE system \eqref{eq:law_limit}. Furthermore, we have the entropic uniform in time propagation of chaos, i.e.,
\begin{equation}\label{eq:entropic_chaos}
\mathbb{E}\,\sum_n q_n(t)\,\log\frac{q_n(t)}{p_n(t)}\leq C\,\frac{\log \log N}{1+\log N}.
\end{equation}
\end{theorem}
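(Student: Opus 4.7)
The plan is to leverage the ODE-level decay estimate \eqref{eq:main_result_1} together with a standard short-time propagation of chaos, split along a carefully chosen time scale to obtain a uniform-in-time bound.

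First, I would establish the $L^1$ bound \eqref{eq:L1chaos}. Writing the empirical measure as a jump process in martingale form, each coordinate satisfies $\dd q_n = \mathcal{L}[\mathbf{q}(t)]_n\,\dd t + \dd M_n^N(t)$ where the martingale $M_n^N$ has quadratic variation of order $1/N$ per unit time. Exchangeability of the agents and the fact that $\overbar{r}$ depends only on $p_0$ make $\mathcal{L}$ Lipschitz on $V_\mu$ in the $\ell^1$-norm with an absolute constant $K$, so a Gr\"onwall argument yields the short-time estimate
\begin{equation*}
\mathbb{E}\|\mathbf{q}(t)-\mathbf{p}(t)\|_{\ell^1}^2 \leq \frac{C\,\expo^{Kt}}{N}, \qquad t \geq 0.
\end{equation*}
This bound is efficient on a window $[0,T_N]$ with $T_N := \alpha\,\log N$ and $\alpha < 1/K$, where it yields a polynomial-in-$1/N$ error.

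For $t \geq T_N$, I would use the triangle inequality $\|\mathbf{q}-\mathbf{p}\|_{\ell^1} \leq \|\mathbf{q}-\mathbf{p}^*\|_{\ell^1} + \|\mathbf{p}-\mathbf{p}^*\|_{\ell^1}$. The second term is handled by \eqref{eq:main_result_1} and Csiszar-Kullback-Pinsker, so $\|\mathbf{p}(t)-\mathbf{p}^*\|_{\ell^1}^2 \leq C\expo^{-C\sqrt{t}}$. To control the first term, I would lift the entropy dissipation \eqref{eq:entropy_dissipation_rewrite} to the $N$-agent level: computing $\tfrac{\dd}{\dd t}\mathbb{E}[\DD_{\KL}(\mathbf{q}(t)\|\mathbf{p}^*)]$ with the jump generator and exchangeability reproduces the ODE-level dissipation formula plus a discreteness correction of order $1/N$. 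Combined with the entropy-entropy dissipation inequality underlying \eqref{eq:main_result_1}, this yields an essentially sub-exponential decay of $\mathbb{E}\,\DD_{\KL}(\mathbf{q}(t)\|\mathbf{p}^*)$ up to a $1/N$ noise plateau. Balancing the two pieces across the splitting then produces the rate $C/(1+\log N)$.

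To deduce the entropic bound \eqref{eq:entropic_chaos} from \eqref{eq:L1chaos}, I would split the relative entropy at a cutoff $n = R_N$. On $\{n \leq R_N\}$ the density $p_n(t)$ is uniformly bounded below (by comparison with $\mathbf{p}^*$, whose smallest atom on this range is of order $(\mu/(1+\mu))^{R_N}$), so the truncated relative entropy is dominated by the $L^1$ distance, contributing at most $C R_N/(1+\log N)$. On the tail $\{n > R_N\}$, conservation of the first moment (Lemma~\ref{invariant}) gives $\sum_{n > R_N} n\,q_n \leq \mu$, and combined with a geometric-like upper bound on $p_n(t)$ this yields tail contribution $O(\expo^{-c R_N})$. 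Choosing $R_N \sim \log\log N$ balances the two contributions and produces the rate $C\log\log N/(1+\log N)$. The main obstacle will be the $N$-agent entropy-dissipation identity: one needs an analogue of \eqref{eq:entropy_dissipation} mirroring the ODE-level formula while cleanly absorbing the $O(1/N)$ correction from the discrete jumps, so that the entropy-entropy dissipation inequality underlying \eqref{eq:main_result_1} can be transferred to the noisy $N$-agent flow.
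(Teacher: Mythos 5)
Your high-level plan is aligned with the paper's: a short-time Gr\"onwall estimate, a long-time control via the triangle inequality against $\mathbf{p}^*$ using the ODE-level decay together with an $N$-agent entropy decay, a split at $T\sim\log N$, and an interpolation argument for the entropic bound. The final constants and rates you aim for are correct. However, the proposal is severely underspecified precisely where the paper does its genuinely new and hardest work, and one of its component steps is likely to fail as described.

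The main gap is the sentence ``computing $\tfrac{\dd}{\dd t}\mathbb{E}[\DD_{\KL}(\mathbf{q}(t)\|\mathbf{p}^*)]$ \dots reproduces the ODE-level dissipation formula plus a discreteness correction of order $1/N$.'' This is not a routine bookkeeping step. First, the discrete entropy $q_n\log q_n$ is singular at $q_n=0$, and the empirical coordinates sit on the lattice $\{0,1/N,2/N,\dots\}$, so the paper has to replace $\log$ by a truncation $L$ and track the atoms of size $1/N$; the correction that results is $\log N/\sqrt{N}$, not $1/N$. Second, to use the entropy--entropy-dissipation inequality at the $N$-agent level one needs the analogue of Corollary~\ref{cor1}: uniform-in-time control of $\mathbb{E}\sum_n K^n q_n$ with $K>1$. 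This is not at all immediate: $\bar r=1-q_0$ is now a random variable that can get close to $1$ on rare events, and those rare events would destroy the exponential moment bound unless one has a sharp large-deviation tail for $\bar r$. The paper devotes Lemmas~\ref{phik} and~\ref{1/Nr} to producing the bound $\sup_{t\geq 1}\mathbb{P}(\bar r(t)\geq r_0)\leq \expo^{-N/\lambda}$, whose exponential-in-$N$ smallness is essential to beat the $K^{N\mu}$ growth in Lemma~\ref{Kn}. Third, the log-Sobolev inequality has to be restated in a pointwise form (Proposition~\ref{preciselogsobolev}) that tracks its dependence on $K$, $\bar r$, and on the discrepancy $|\mu-\tilde\mu|$ in the first moment of the normalized empirical measure; none of these constants are benign, and the $1/(1-\bar r)$ factors have to be tamed by the large-deviation bound. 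Your proposal treats all of this as a ``clean absorption of $O(1/N)$''; it is in fact the bulk of Section~3.

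A secondary issue: your interpolation for the entropic bound cuts the sum at an index $n=R_N$ and claims the head is ``dominated by the $L^1$ distance''. But for $n\leq R_N$ one still has $q_n$ taking values on the lattice $\{0,1/N,\dots\}$, so $|\log q_n|$ can be as large as $\log N$ even at small $n$, and that factor $\log N$ would spoil the rate if it multiplied $\mathbb{E}\|\mathbf{q}-\mathbf{p}\|_{\ell^1}$. The paper avoids this by cutting instead on the ratio $q_n/p_n$ at a level $M$: on the band $M^{-1}p_n\leq q_n\leq Mp_n$ the integrand $q_n\log\frac{q_n}{p_n}+p_n-q_n$ is bounded by $|\log M|\,|q_n-p_n|^2$, and the two tail pieces ($q_n>Mp_n$ and $q_n<M^{-1}p_n$) are handled by the moment bound and by the trivial upper bound respectively; optimizing in $M$ gives $\log\log N/(1+\log N)$ with no dangerous $|\log q_n|$ factor and without needing any pointwise lower bound on $p_n(t)$.

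Finally, a small overstatement: you expect ``essentially sub-exponential decay of $\mathbb{E}\,\DD_{\KL}(\mathbf{q}\|\mathbf{p}^*)$ up to a $1/N$ noise plateau.'' The paper proves only the polynomial rate $C/(1+t)+C\log N/N^{1/4}$ (Theorem~\ref{thm3}): only the weaker entropy--entropy-dissipation inequality \eqref{eq:EED1}, not the refined one from Theorem~\ref{thm2}, transfers cleanly to the noisy level. This is still enough, because after splitting at $T\sim\log N$ the bottleneck is $C/(1+T)$, so polynomial decay already yields $C/(1+\log N)$; but one should not expect more from the $N$-agent side than the paper delivers.
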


\subsection{Ideas behind the proof of main results}

We plan to indicate the main steps and ideas used for the proof of our main theorems. We begin with the procedures towards the proof of the quantitative decay in relative entropy stated in \eqref{eq:main_result_1} (see also Theorem \ref{thm} at the beginning of section \ref{sec:2} below). We first establish a uniform in time upper bound for $\bar r = 1-p_0$ in Proposition \ref{prop1}, which enables us to deduce in Corollary \ref{cor1} the finiteness of the exponential moments of ${\bf p}(t)$ for all $t \geq 0$ whenever the initial datum ${\bf p}(0)$ has a finite exponential moment. The next key observation to make is that the entropy dissipation term serves as a upper bound for $|p_0 - p^*_0|$ (or for $|\bar{r} - r^*|$, which is the content of Lemma \ref{lem1}. The aforementioned crucial observation then allows us to derive a first entropy-entropy dissipation estimate, giving rise to a polynomial in time decay of the relative entropy (see Theorem \ref{thm1}):
\begin{equation}\label{eq:polynoimal_decay_intro}
\DD_{\KL}({\bf p}(t)~||~{\bf p}^*) \leq \frac{C}{t + C}.
\end{equation}
This entropy-entropy dissipation estimate will be refined (at large times) to obtain the improved upper bound \eqref{eq:main_result_1} stated in Theorem \ref{thm}. Such improvement relies crucially on the validity of certain logarithmic Sobolev inequalities, which will be justified by employing a technical theorem provided in \cite{canizo_trend_2017}.

We now turn to the ideas behind the proof of the uniform in time propagation of chaos (Theorem \ref{thm4}). The proof will be based on upper bounds on two relative entropies $\mathbb{E}\,\sum_{n\geq 0} q_n(t)\,\log \frac{q_n(t)}{p_n^*}$ and $\sum_{n\geq 0} p_n(t)\,\log \frac{p_n(t)}{p_n^*}$, the classical Csisz\'{a}r-Kullback-Pinsker inequality and a elementary interpolation argument. We will first establish in section \ref{subsec:3.1} a stochastic analog of the entropy-entropy dissipation relation in the deterministic setting, then in section \ref{subsec:3.2} we extend Corollary \ref{cor1} to the $N$-agent based dynamics, which is more involved than the deterministic setting as we need a relatively sharp large derivation estimate for the empirical proportion of ``rich'' people (i.e., for the random variable $r = 1 - q_0$). Section \ref{subsec:3.3} contains a precise quantitative logarithmic Sobolev inequality required for the proof of the uniform propagation of chaos result in section \ref{subsec:3.4}. Lastly, We believe it will be possible to modify our argument presented in this work so that uniform in time propagation of chaos result can be obtained for closely related models, for instance the so-called poor-biased and rich-biased exchange model investigated in \cite{cao_derivation_2021}.

\section{Quantitative entropy-entropy dissipation estimates}\label{sec:2}
\setcounter{equation}{0}

In this section, starting from the entropy dissipation relation
\begin{equation}\label{eq:ED_restate}
\frac{\dd}{\dd t} \HH = -\DD
\end{equation}
with $\HH := \DD_{\KL}({\bf p}~||~{\bf p}^*)$ and $\DD :=  \sum_{n=0}^\infty (\tilde{p}_{n+1} - p_{n+1})\left(\log(\tilde{p}_{n+1}) - \log(p_{n+1})\right)$, we derive a general entropy-entropy dissipation inequality based on certain logarithmic Sobolev inequalities (holding for a generic initial datum ${\bf p}(0) \in V_\mu$), which enables us to obtain a quantitative result on the decay of the relative entropy $\DD_{\KL}({\bf p}~||{\bf p}^*)$. In particular, the main theorem we will prove in this section reads as
\begin{theorem}\label{thm}
Assume that ${\bf p} \in V_\mu$ is the unique solution to the nonlinear system of ODEs \eqref{eq:law_limit}. There exists some $C > 0$ such that
\begin{equation}\label{eq:almost_exponential_decay_beginning}
\HH = \DD_{\KL}({\bf p}(t)~||~{\bf p}^*) \leq C\expo^{-C\sqrt{t}}.
\end{equation}
\end{theorem}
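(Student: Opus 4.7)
The plan is to follow the five-step roadmap sketched in the introduction, turning the entropy dissipation identity \eqref{eq:entropy_dissipation_rewrite} into an explicit decay rate by progressively sharpening the entropy--entropy dissipation inequality. I would first establish a uniform-in-time lower bound $p_0(t)\geq c_\mu>0$, equivalently $\bar r(t)\leq \bar r_\infty<1$. A clean way is to combine the ODE $p_0'=p_1-\bar r p_0$ with the moment constraint $\sum_n n p_n=\mu$ (so that $\bar r\leq \mu$ is automatic but we want $<1$): if $p_0$ ever becomes smaller than some threshold depending on $\mu$ and ${\bf p}(0)$, then $p_1\geq \bar r p_0$, forcing $p_0'\geq 0$, which traps $p_0$ from below. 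Plugging $\varphi(n)=\expo^{\theta n}$ into the integration-by-parts formula \eqref{IBP} then gives
\begin{equation*}
\frac{\dd}{\dd t} M_\theta(t) = (\expo^\theta-1)\bigl[(\bar r - \expo^{-\theta})M_\theta + \expo^{-\theta}p_0\bigr],\qquad M_\theta(t):=\sum_n p_n(t)\expo^{\theta n},
\end{equation*}
and for any $\theta$ with $\expo^{-\theta}>\bar r_\infty$ one reads off $M_\theta(t)\leq \max(M_\theta(0),C_\theta)$; this is the uniform exponential moment bound.

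Next I would show that $\DD$ controls $(\bar r-r^*)^2$. Keeping only the $n=0$ term in the dissipation gives
\begin{equation*}
\DD\;\geq\;(\bar r p_0-p_1)\bigl(\log(\bar r p_0)-\log p_1\bigr)\;\geq\;\frac{(\bar r p_0-p_1)^2}{\max(\bar r p_0,p_1)},
\end{equation*}
and since $\bar r-r^*=(1-p_0)-(1-p_0^*)$ while $p_1-\bar r p_0$ essentially measures the defect of geometric structure at the first level, the constraint $\sum_n n p_n=\mu=\sum_n n p_n^*$ together with the exponential moment bound propagates this into $\DD\geq c|\bar r-r^*|^2$. Coupling this with the Csiszár--Kullback--Pinsker inequality applied to the two-point comparison of $(p_0,\bar r)$ with $(p_0^*,r^*)$, and to the remainder controlled by the exponential moment, yields $\DD\geq c\,\HH^2$. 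Feeding this into $\HH'=-\DD$ and integrating the ODE $y'\leq -cy^2$ produces the polynomial bound $\HH(t)\leq C/(t+C)$ claimed in Theorem~\ref{thm1}.

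The final step is to upgrade this into the almost-exponential rate. Once $\HH(t)$ is small, $\bar r(t)$ is close to $r^*$ and the dissipation is essentially the Dirichlet form of a Fokker--Planck-type operator on $\mathbb{N}$ whose invariant measure is geometric; the technical theorem of \cite{canizo_trend_2017} provides a (weak, moment-dependent) logarithmic Sobolev inequality of the form
\begin{equation*}
\DD\;\geq\;\frac{c}{1+|\log \HH|}\,\HH,
\end{equation*}
with the $|\log \HH|$ factor arising because the log Sobolev constant on $\mathbb{N}$ degrades polynomially with the size of the essential support of ${\bf p}(t)-{\bf p}^*$, which is in turn controlled by the exponential moments and by the polynomial decay just obtained. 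Integrating $\HH'\leq -c\HH/(1+|\log \HH|)$ gives $|\log \HH(t)|^2\gtrsim t$, hence $\HH(t)\leq C\expo^{-C\sqrt{t}}$ for all $t$ large, and the bound on a compact initial interval is trivial.

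The main obstacle is the logarithmic Sobolev step: the reference measure is only \emph{asymptotically} geometric with parameter $\bar r(t)$, which drifts in time, so I would need to either freeze a reference measure ${\bf p}^*$ and quantify the error in the log Sobolev constant due to $|\bar r-r^*|$, or, more robustly, apply the Cañizo--Einav--Lods type criterion directly to the linearized operator around ${\bf p}^*$ and absorb the nonlinear correction using the polynomial decay from the previous step. Everything else (moment bounds, the crude $\DD\gtrsim \HH^2$ estimate, the Gronwall integration) is fairly mechanical once the uniform bound on $\bar r$ is in hand.
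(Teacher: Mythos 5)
Your roadmap coincides with the paper's: a uniform-in-time bound on $\bar r$, propagation of exponential moments, a control of $|\bar r - r^*|$ by $\sqrt{\DD}$, a crude entropy--entropy dissipation estimate giving $C/(t+C)$, and then a refinement via Ca\~nizo--Einav--Lods to upgrade to $\DD\gtrsim \HH/|\log\HH|$. However, two of the key supporting lemmas are argued in a way that does not work as stated.

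The most serious gap is in the first step. You claim that if $p_0$ drops below a threshold depending only on $\mu$ and the data, then automatically $p_1\geq \bar r p_0$, so $p_0'=p_1-\bar r p_0\geq 0$ and $p_0$ is trapped from below. This maximum-principle argument is false: nothing prevents $p_0$ from being small while $p_1$ is even smaller (or zero) and all the mass accounting for $\sum_n n p_n=\mu$ sits at $n\geq 2$. In that configuration $p_0'=p_1-\bar r p_0<0$ and $p_0$ continues to decrease. The paper's Proposition~\ref{prop1} does not attempt a pointwise comparison; it shows that the \emph{dissipation} $\DD$ being bounded on a time window (from $\int_i^{i+1}\DD\leq C_0$) forces $p_0$ to be bounded below via an iterated dissipation estimate of the form \eqref{eq:Basic}, descending from the guaranteed bulk index $n_0\leq N_0$ down to $0$, and then propagates this to all times with $p_0'\geq -p_0$. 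The dissipation term is essential; a purely algebraic argument with the ODE for $p_0$ alone cannot give the uniform bound.

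A secondary issue is the control of $|\bar r - r^*|$ by $\sqrt{\DD}$. You keep only the $n=0$ term of the dissipation, which bounds $|p_1-\bar r p_0|$, and then assert that the moment constraint ``propagates'' this into $\DD\geq c|\bar r-r^*|^2$. This is not justified: the $n=0$ defect measures how far the first rung of $\{p_n\}$ is from geometric, but $|\bar r - r^*|$ reflects the accumulated defect at every level. The paper's Lemma~\ref{lem1} uses the telescoping identity $p_n-\bar r^n p_0=\sum_{k<n}\bar r^{n-k-1}(p_{k+1}-\bar r p_k)$, sums over $n$ with weight $n$, and applies Cauchy--Schwarz against the \emph{entire} dissipation sum $\sum_k |p_{k+1}-\bar r p_k|^2/(p_{k+1}+\bar r p_k)$. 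Keeping a single term is not enough. Once these two steps are repaired (and assuming you supply the $\HH=\HH_1+\HH_2$ decomposition with the $M$-threshold to get $\HH\lesssim \sqrt{\DD}|\log(1/\DD)|$, and work out the interpolated equilibrium needed to make the Ca\~nizo--Einav--Lods criterion quantitative in the drifting parameter $\bar r$), the Gronwall arguments you sketch are indeed the same as the paper's.
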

We emphasize that our analysis of the nonlinear ODE system \eqref{eq:law_limit} is purely analytic and fully independent of the underlying stochastic system, in contrast to the probabilistic approaches taken in \cite{graham_rate_2009,merle_cutoff_2019}.

\subsection{A uniform in time upper bound on $\bar{r}(t)$}\label{subsec:2.1}

In order to obtain the existence of some exponential moments of the solution ${\bf p}$ to \eqref{eq:law_limit}, we need an upper bound on $\bar{r} = \sum_{n\geq 1} p_n$ for any $t > 0$.

\begin{proposition}\label{prop1}
Assume that ${\bf p} \in V_\mu$ is the unique solution to the nonlinear system of ODEs \eqref{eq:law_limit}. There exists a fixed constant $C \in (0,1)$ depending only on $\mu$, such that $\bar{r}(t) < C$ for all $t > 0$.
\end{proposition}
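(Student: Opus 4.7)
The natural starting point is the ODE satisfied by $\bar r(t)=1-p_0(t)$. From the first line of \eqref{eq:L},
\[
\bar r'(t) = -p_0'(t) = \bar r(t)\,p_0(t) - p_1(t) = \bar r(t)\bigl(1-\bar r(t)\bigr) - p_1(t).
\]
My plan is to lower-bound $p_1(t)$ solely in terms of $\bar r(t)$ and $\mu$, using only conservation of mass and mean, and then turn the resulting expression into a differential inequality pushing $\bar r$ below a constant strictly less than $1$.

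The simplest such estimate is the ``lumping'' inequality $p_1\geq 2\bar r-\mu$, obtained from
\[
\mu=\sum_{n\geq 1} n\,p_n \;\geq\; p_1+2\sum_{n\geq 2} p_n \;=\; 2\bar r-p_1.
\]
Substituting this back into the equation for $\bar r'$ gives, whenever $\bar r>\mu/2$,
\[
\bar r'(t)\leq \bar r(1-\bar r)-(2\bar r-\mu) = -\bar r^2-\bar r+\mu,
\]
and comparing with the autonomous ODE $y'=-y^2-y+\mu$ shows that $\bar r(t)$ is driven down toward the positive root $\bar r_+ := \tfrac{-1+\sqrt{1+4\mu}}{2}$ of $y^2+y-\mu=0$. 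When $\mu<2$ this yields $\bar r_+<1$, and the proposition follows with $C := \max\{\bar r(0),\bar r_+\}<1$ by standard ODE comparison (or, after a transient, with any $C\in(\bar r_+,1)$).

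For $\mu\geq 2$ the crude bound yields $\bar r_+\geq 1$, so the first-moment lumping alone is not enough. The natural refinement is to iterate the lumping with larger cutoffs $k\geq 2$,
\[
\mu\geq \sum_{j=1}^{k-1} j\,p_j+k\Bigl(\bar r-\sum_{j=1}^{k-1} p_j\Bigr),
\]
coupling $p_1$ with $p_2,\dots,p_{k-1}$, and combining these with the evolution equations for the low-index states $p_1,p_2,\dots$ coming from \eqref{eq:L} to get a sharper pointwise lower bound on $p_1$ in terms of $\bar r,\mu$ and auxiliary quantities.

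The main obstacle, as I see it, is turning these instantaneous estimates into a genuinely uniform-in-$t$ bound (in particular, one has to handle initial data for which $\bar r(0)$ is very close to $1$, so the constant will ultimately depend on $\mu$ and, at least implicitly, on a transient time). I expect two pieces of time-integrated information to close the argument: first, the second-moment identity $\tfrac{\dd}{\dd t}\sum_{n} n^2 p_n=2(1+\mu)(\bar r(t)-\bar r^*)$, which, together with the automatic lower bound $\sum_n n^2 p_n\geq \mu^2$, gives time-averaged control of $\bar r-\bar r^*$; second, the entropy dissipation \eqref{eq:entropy_dissipation}, which implies $\int_0^\infty \DD(s)\,\dd s\leq \HH[{\bf p}(0)]-\HH[{\bf p}^*]<\infty$ and therefore forces $\bar r(t_k)\to\bar r^*<1$ along some sequence $t_k\to\infty$. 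Combining these averaged estimates with the differential inequality from the lumping argument, via a continuity/bootstrap argument, should yield the desired uniform upper bound.
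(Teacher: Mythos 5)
Your argument breaks down at its central step, and for a structural reason that the iterated refinements you sketch will not repair. You want to lower-bound $p_1(t)$ by a positive function of $\bar r(t)$ and $\mu$ using only the conservation of mass and mean. But for any $\mu\geq 2$ there exist $\mathbf{p}\in V_\mu$ with $\bar r=1$ and $p_1=0$ (take, e.g., $p_2=\frac{k-\mu}{k-2}$, $p_k=\frac{\mu-2}{k-2}$, all other entries $0$, for any integer $k>\mu$). So no inequality of the form $p_1\geq F(\bar r)$ with $F(1)>0$ can follow from the constraints $\sum p_n=1$, $\sum n\,p_n=\mu$ alone, and this is exactly what your differential inequality for $\bar r$ needs near $\bar r=1$. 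Concretely, your lumping bound $p_1\geq 2\bar r-\mu$ only plays a role when $\bar r>\mu/2$; since $\bar r\leq 1$, this region is empty as soon as $\mu\geq 2$, i.e.\ for every admissible integer mean except $\mu=1$. The iterated-cutoff inequalities you propose are still linear consequences of mass and mean conservation and face the same obstruction; the vague appeal to ``the evolution equations for the low-index states'' is where the real content would have to live, and it is not supplied.

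The paper's proof uses a mechanism you do not have: it exploits the \emph{pointwise} value of the dissipation $\DD(t)$ to control successive ratios $p_{n}/p_{n+1}$. First, a Markov-type argument shows that for each $t$ there is some $n_0\leq N_0(\mu)$ with $p_{n_0}\geq 1/C_\mu$. Then the inequality $p_{n+1}\mathbbm{1}_{\{M p_n\leq p_{n+1}\}}\leq C\DD/\log M$ (which comes directly from the definition of $\DD$, not from moment constraints) is chained from $n_0$ down to $0$ to produce a lower bound $p_0\geq 1-\phi(\DD(t))$ with $\phi$ increasing and $\phi(\infty)=1$. Your proposal does invoke $\int_0^\infty\DD\,\dd s<\infty$, which is the same time-averaged information the paper uses in its third step to locate good times $t_i\in[i,i+1]$ with $\DD(t_i)\leq C_0$; but without the pointwise $\DD\Rightarrow\bar r$ bound of step two, knowing that $\DD(t_k)$ is small tells you nothing about $\bar r(t_k)$. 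Likewise the second-moment identity gives only time-averaged control of $\bar r-r^*$ and does not by itself prevent excursions of $\bar r$ arbitrarily close to $1$. Finally, even granting bounds at a sparse sequence of times, you still need the paper's Gronwall step $\frac{\dd}{\dd t}(1-\bar r)\geq -(1-\bar r)$ to fill in between those times; your ``continuity/bootstrap'' placeholder does not do this. In short, the strategy needs an entirely new ingredient — dissipation-controlled ratio bounds — and the remaining pieces you list, while individually correct, do not substitute for it.
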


\Proof The proof can be divided into three steps. The first step consists of proving the existence of a positive fixed integer $N_0 \in \mathbb{N}$ and a fixed constant $C_\mu > 1$ (depending only on $\mu$) such that for any $t \geq 0$, there exists some $n_0 \leq N_0$ such that $p_{n_0} \geq 1/C_\mu$. Indeed, for any integer $N > \mu$, we have \[\mu = \sum_{n\geq 1} np_n \geq N\sum_{n\geq N} p_n \geq N\left(1-N\max\limits_{k\geq N-1} p_k\right),\] whence $\max\limits_{k\leq N-1} p_k \geq (N-\mu)/N^2$. In the second step, we show that $\bar{r} \leq \phi(D)$ for each $t > 0$, where $\phi \colon [0,\infty) \to [0,\infty)$ is a non-decreasing function such that $\lim_{x \to \infty} \phi(x) = 1$. For this purpose, we observe that for any $a,b \in \mathbb{R}_+$, if $a \geq Mb$ for some $M \geq 2$, then there exists some constant $C > 0$ independent of $a,b$ and $M$, for which $(a-b)\log(a/b) \geq \frac{1}{C}\,a\log(a/b) \geq \frac{1}{C}\,a\log M$. With this elementary observation in mind, we deduce from the definition of the dissipation term $\DD$ that there exists some fixed constant $C > 0$ such that for all $n \in \mathbb{N}_+$ and $M \geq 2$,
\begin{equation}\label{eq:Basic}
p_{n+1}\mathbbm{1}_{\{Mp_n \leq p_{n+1}\}} \leq \frac{CD}{\log M}.
\end{equation}
Since there exists some $n_0 \leq N_0$ such that $p_{n_0} \geq 1/C_\mu$, if we choose $M_0$ large enough so that $1/C_\mu > \frac{CD}{\log M}$ (or equivalently $M_0 \approx \expo^{CC_\mu D}$), then $p_{n_0 -1} > \frac{p_{n_0}}{M_0} \geq frac{1}{C_\mu M_0}$. We can choose $M_1$ such that $\frac{1}{C_\mu M_0} > \frac{CD}{\log M_1}$ (or equivalently $M_1 \approx \expo^{CC_\mu M_0 D}$), then $p_{n_0 -2} > \frac{p_{n_1}}{M_1} \geq \frac{1}{C_\mu M_0 M_1}$. Iterating this process eventually leads us to \[p_0 = 1 - \bar{r} \geq 1/\exp\{CC_\mu D\exp\{CC_\mu D\exp\{\cdots \exp\{CC_\mu D\}\cdots\} \} \},\] where the exponential function appears at most $N_0$ times. This estimate completes the proof of our second step. For any $i \in \mathbb N$, integrating \label{eq:ED} from $t = i$ to $t = i + 1$ gives rise to $\int_i^{i+1} \DD(s) \dd s \leq C_0$, where $C_0 := \DD_{\KL}({\bf p}(0)~||~{\bf p}^*)$ is a fixed constant. Hence there exists some $t_i \in [i,i+1]$ such that $\DD(t_i) \leq C_0$, and by the previous analysis we deduce that $1 - \bar{r}(t_i) \geq 1 - \phi(C_0)$. Now for any $t \geq 1$ (the constant $1$ can be replaced by any positive constant by a obvious modification of the argument presented here), there exists some $i \in \mathbb N$ for which $t \in [i+1,i+2]$ so that $t_i \leq t \leq t_i + 2$. Finally, thanks to the differential inequality $\frac{\dd}{\dd t} (1 - \bar{r}) = p_1 - \bar{r}p_0 \geq -(1 - \bar{r})$, we conclude that \[1 - \bar{r}(t) \geq \left(1 - \bar{r}(t_i)\right)/\expo^{2} \geq \left(1-\phi(C_0)\right)/\expo^{2},\] which completes the proof. \qed

\begin{remark}
The essence of the content of Proposition \ref{prop1} coincides with Lemma 3 in \cite{merle_cutoff_2019}. However, while the argument provided in \cite{merle_cutoff_2019} is entirely probabilistic and is built upon a non-trivial explicit stochastic representation of the underlying particle system, our argument is solely analytic and perhaps more accessible to a wider audience.
\end{remark}

As a very important consequence of Proposition \ref{prop1}, we deduce the existence of some exponential moments of the solution ${\bf p}$ to \eqref{eq:law_limit}.

\begin{corollary}\label{cor1}
Assume that ${\bf p} \in V_\mu$ is the unique solution to the nonlinear system of ODEs \eqref{eq:law_limit}. There exists some fixed $K > 1$ such that \[\sup\limits_{t > 0} \sum_{n \geq 0} K^n p_n(t) < \infty \] if $\sum_{n \geq 0} K^n p_n(0)$ is finite.
\end{corollary}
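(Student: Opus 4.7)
The plan is to choose $K$ slightly larger than $1$ and bounded above by $1/C$ (where $C < 1$ is the constant from Proposition \ref{prop1}), apply the integration by parts identity \eqref{IBP} with the test function $\varphi(n)=K^n$, and show that the corresponding $K$-moment $M(t):=\sum_{n\geq 0} K^n p_n(t)$ satisfies a linear differential inequality of the form $M'(t) \leq -\alpha M(t) + \beta$ with $\alpha,\beta>0$, so that $M$ stays bounded by $\max\{M(0),\beta/\alpha\}$ for all $t\geq 0$.

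More concretely, inserting $\varphi(n) = K^n$ (so that $\varphi(n+1)-\varphi(n) = (K-1)K^n$) into \eqref{IBP} yields, at least formally,
\begin{equation*}
\frac{\dd}{\dd t} M(t) = (K-1)\sum_{n=0}^\infty \bigl(\bar{r}\,p_n - p_{n+1}\bigr) K^n = (K-1)\Bigl[\bigl(\bar{r}-\tfrac{1}{K}\bigr) M(t) + \tfrac{p_0(t)}{K}\Bigr].
\end{equation*}
Thanks to Proposition \ref{prop1}, we have $\bar{r}(t) \leq C < 1$ for all $t>0$, so by fixing any $K \in (1,1/C)$ and setting $\alpha := (K-1)\bigl(\tfrac{1}{K}-C\bigr)>0$ and $\beta := (K-1)/K > 0$, the above identity produces the differential inequality
\begin{equation*}
\frac{\dd}{\dd t} M(t) \leq -\alpha M(t) + \beta,
\end{equation*}
from which a routine Gronwall argument gives $M(t) \leq \max\bigl\{M(0),\beta/\alpha\bigr\}$ uniformly in $t \geq 0$.

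The only genuine obstacle is the rigorous justification of the integration by parts formula \eqref{IBP} with the unbounded test function $\varphi(n)=K^n$, since a priori we do not know that $M(t)$ is finite (or even that the relevant series converges) for $t>0$. I would handle this by a standard truncation argument: apply \eqref{IBP} with the bounded test function $\varphi_N(n) := K^{\min(n,N)}$, whose discrete derivative vanishes for $n \geq N$, to obtain the analogous inequality
\begin{equation*}
\frac{\dd}{\dd t} M_N(t) \leq -\alpha M_N(t) + \beta, \qquad M_N(t) := \sum_{n=0}^\infty \varphi_N(n)\, p_n(t),
\end{equation*}
which gives a uniform-in-$N$ bound $M_N(t) \leq \max\{M(0),\beta/\alpha\}$. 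Letting $N\to\infty$ and invoking monotone convergence (since $\varphi_N \nearrow K^n$) yields the claimed uniform bound on $M(t)$. The assumption $M(0)<\infty$ is used precisely to start this truncated estimate from a finite initial value.
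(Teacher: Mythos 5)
Your main computation is correct and follows exactly the same route as the paper: insert $\varphi(n)=K^n$ into \eqref{IBP}, obtain a linear ODE for $M(t)=\sum_n K^n p_n(t)$ whose leading coefficient is $(K-1)(\bar r - 1/K)$, and use Proposition~\ref{prop1} to make that coefficient strictly negative for $K\in(1,1/C)$. Your identity $\frac{\dd}{\dd t}M=(K-1)\bigl[(\bar r-\tfrac1K)M+\tfrac{p_0}{K}\bigr]$ is the correct version when the sum starts at $n=0$ (the paper's displayed formula with $\bar rKp_0-p_1$ corresponds to tracking $\sum_{n\geq 1}K^np_n$, despite the notation; the two formulations are equivalent for the purposes of the Gronwall step).

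However, your truncation argument for a priori finiteness has a genuine gap. With $\varphi_N(n)=K^{\min(n,N)}$, the integration by parts produces
\[
\frac{\dd}{\dd t}M_N=(K-1)\Bigl[\bigl(\bar r-\tfrac1K\bigr)\,A_N - K^{N-1}p_N+\tfrac{p_0}{K}\Bigr],
\qquad
A_N:=\sum_{n=0}^{N-1}K^n p_n ,
\]
and $A_N=M_N-K^N\sum_{m\geq N}p_m$ is in general strictly smaller than $M_N$. Since $\bar r-\tfrac1K<0$, the bound $A_N\leq M_N$ goes the \emph{wrong} way: $\bigl(\bar r-\tfrac1K\bigr)A_N\geq \bigl(\bar r-\tfrac1K\bigr)M_N$, so you cannot conclude $\frac{\dd}{\dd t}M_N\leq -\alpha M_N+\beta$. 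In other words, the truncation removes the tail $K^N\sum_{m\geq N}p_m$ from the dissipative term while leaving it in $M_N$, and this discarded piece can a priori dominate $M_N$. What the truncated inequality actually yields (after dropping the nonpositive terms) is $\frac{\dd}{\dd t}M_N\leq \tfrac{K-1}{K}$, hence only a bound growing linearly in time.

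The easiest way to close the gap is a two-step argument. First, from $\frac{\dd}{\dd t}M_N\leq (K-1)/K$ (or the equally crude $\frac{\dd}{\dd t}M_N\leq (K-1)\bar r\,A_N\leq (K-1)M_N$), obtain $M_N(t)\leq M(0)+C\,t$ uniformly in $N$; by monotone convergence $M(t)<\infty$ for every finite $t$. Second, once $M(t)<\infty$, the series in the IBP formula with $\varphi(n)=K^n$ converges absolutely (each term is dominated by $C\,K^n(p_n+p_{n+1})$, summable when $M<\infty$), so the formal identity is rigorous, and the differential inequality $\frac{\dd}{\dd t}M\leq -\alpha M+\beta$ together with Gronwall yields the uniform bound. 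The paper omits this regularity step entirely, so you are being more careful than the source; the flaw is only in asserting that the truncated quantity inherits the dissipative inequality verbatim.
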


\Proof It suffices to show the existence of a fixed constant $K > 1$ such that $\sup\limits_{t > 0} \sum_{n \geq 1} K^n p_n(t) < \infty$ when $\sum_{n \geq 1} K^n p_n(0) < \infty$. Let $\MM (t) :=  \sum_{n \geq 0} K^n p_n(t)$, a direct computation yields \[\frac{\dd}{\dd t} \MM = \frac{(\bar{r}K - 1)(K-1)}{K}\MM + \bar{r}Kp_0 - p_1.\] Thanks to Proposition \ref{prop1}, there exists a fixed constant $C \in (0,1)$ such that $\bar{r}(t) \leq C$ for all $t > 0$. To finish the proof, it suffices to take $K > 1$ such that $K \leq 1/C$. \qed

\subsection{The first entropy-entropy dissipation estimate}\label{subsec:2.2}

To derive our first entropy-entropy dissipation estimate, we first show that the entropy dissipation $\DD$ serves us a upper bound on $|p_0 - p^*_0| = |\bar{r} - r^*|$.

\begin{lemma}\label{lem1}
Assume that ${\bf p} \in V_\mu$ is the unique solution to the nonlinear system of ODEs \eqref{eq:law_limit}. Then there exists a fixed constant $C > 0$ such that
\begin{equation}\label{eq:pillar}
|p_0 - p^*_0| = |\bar{r} - r^*| \leq C\sqrt{\DD}
\end{equation}
for any $t > 0$.
\end{lemma}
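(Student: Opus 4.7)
The plan is to extract the inequality $|\bar{r} - r^*| \leq C\sqrt{\DD}$ by combining the mean-conservation law of Lemma~\ref{invariant} with a weighted Pinsker-type estimate applied to the entropy dissipation $\DD$. First, I would rewrite $\bar{r} - r^*$ as a first moment of the defect $p_n - \tilde{p}_n$. Since the first moment is preserved along the flow we have $\sum_{n \geq 0} n\,p_n = \mu$, while a direct index shift using the definition of $\tilde{\bf p}$ gives
\begin{equation*}
\sum_{n \geq 0} n\, \tilde{p}_n = \sum_{n \geq 1} n\,\bar{r}\, p_{n-1} = \bar{r} \sum_{k \geq 0} (k+1)\, p_k = \bar{r}(\mu + 1).
\end{equation*}
Subtracting and using $p^*_0 = 1/(\mu+1)$ together with $\tilde{p}_0 = p_0$ to drop the $n = 0$ term yields the key identity
\begin{equation*}
-(\mu + 1)(\bar{r} - r^*) = (\mu+1)(p_0 - p^*_0) = \sum_{n \geq 1} n\,(p_n - \tilde{p}_n).
\end{equation*}

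Next, I would apply the Cauchy--Schwarz inequality with the symmetric weight $p_n + \tilde{p}_n$ to connect this first moment to the dissipation:
\begin{equation*}
(\mu + 1)\,|\bar{r} - r^*| \leq \sum_{n \geq 1} n\,|p_n - \tilde{p}_n| \leq \Bigl(\sum_{n \geq 1} n^2 (p_n + \tilde{p}_n)\Bigr)^{1/2} \Bigl(\sum_{n \geq 1} \frac{(p_n - \tilde{p}_n)^2}{p_n + \tilde{p}_n}\Bigr)^{1/2}.
\end{equation*}
The second factor is then bounded directly by $\DD$ via the elementary termwise inequality
\begin{equation*}
(a - b)\bigl(\log a - \log b\bigr) \geq \frac{2(a - b)^2}{a + b}, \qquad a,b > 0,
\end{equation*}
which is equivalent to $\log t \geq 2(t-1)/(t+1)$ for $t > 0$ and follows from $\frac{\dd}{\dd t}\bigl[\log t - 2(t-1)/(t+1)\bigr] = (t-1)^2/[t(t+1)^2] \geq 0$ together with equality at $t = 1$. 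Applying this to $a = \tilde{p}_n$, $b = p_n$ and summing over $n \geq 1$ yields $\sum_{n \geq 1}(p_n - \tilde{p}_n)^2/(p_n + \tilde{p}_n) \leq \DD/2$.

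The only remaining task is to control the first Cauchy--Schwarz factor uniformly in $t$. Since $\sum_{n \geq 1} n^2 \tilde{p}_n = \bar{r} \sum_{k \geq 0}(k+1)^2 p_k \leq \sum_{k \geq 0}(k+1)^2 p_k$, both pieces are bounded by the second moment of ${\bf p}(t)$, which in turn is controlled by the uniform-in-time exponential moment bound provided by Corollary~\ref{cor1}. Collecting everything, $(\mu+1)^2(\bar{r} - r^*)^2 \leq C_\mu\,\DD/2$, which gives the claim with $C = \sqrt{C_\mu/2}/(\mu+1)$. The main obstacle is precisely this uniform-in-time control of the second moment---without it, the Cauchy--Schwarz step would only yield a time-dependent constant---but this is exactly what Proposition~\ref{prop1} and Corollary~\ref{cor1} have been designed to supply, so every other step reduces to elementary manipulation.
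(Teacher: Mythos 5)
Your proof is correct and uses essentially the same machinery as the paper's: the conserved first moment, Cauchy--Schwarz with the symmetric weight $p_n+\tilde p_n$, the elementary inequality $(a-b)(\log a-\log b)\geq (a-b)^2/(a+b)$, and the uniform-in-time second-moment bound from Proposition~\ref{prop1}/Corollary~\ref{cor1}. The only (minor, purely cosmetic) difference is that you compute $\sum_n n\,\tilde p_n = \bar r(\mu+1)$ directly, whereas the paper reaches the equivalent quantity $\sum_{k\geq 0}(k+1)|p_{k+1}-\bar r p_k|$ by expanding $p_n-\bar r^n p_0$ through a telescoping sum and interchanging the order of summation---your route simply skips that intermediate step.
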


\Proof We observe that for each $n \in \mathbb N$, we have
\begin{equation}\label{eq:identity}
p_n - \bar{r}^n p_0 = \sum_{0\leq k\leq n-1} \bar{r}^{n-k-1}\left(p_{k+1} - \bar{r}p_k\right),
\end{equation}
with the convention that sums over the empty set are equal to zero. Therefore, for some fixed constants $C, \tilde{C} > 0$,
\begin{align*}
\frac{\left|r^* - \bar{r}\right|}{(1-\bar{r})(1+\mu)} &= \left|\sum_{n\geq 1} \left(np_n - n\bar{r}^n p_0\right) \right| \\
&= \left|\sum_{n\geq 1}\sum_{0\leq k\leq n-1} n\bar{r}^{n-k-1}\left(p_{k+1} - \bar{r}p_k\right)\right| \\
&\leq \sum_{k\geq 0} |p_{k+1} - \bar{r}p_k| \sum_{n\geq k+1} n\bar{r}^{n-k-1} \\
&\leq \tilde{C}\sum_{k\geq 0} (k+1)\frac{|p_{k+1} - \bar{r}p_k|}{\sqrt{|p_{k+1} + \bar{r}p_k|}}\sqrt{|p_{k+1} + \bar{r}p_k|} \\
&\leq C\sqrt{\DD},
\end{align*}
where the last inequality follows from the observation that for any $a,b \in \mathbb R_+$,
\begin{equation}\label{eq:elementary}
(a-b)\left(\log a - \log b\right) \geq \frac{(a-b)^2}{a+b}.
\end{equation}
This finishes the proof. \qed

Equipped with Lemma \ref{lem1}, we can prove our first entropy-entropy dissipation estimate.
\begin{theorem}\label{thm1}
Assume that ${\bf p} \in V_\mu$ is the unique solution to the nonlinear system of ODEs \eqref{eq:law_limit}. There exists a fixed constant $C > 0$ such that \begin{equation}\label{eq:EED1}
\HH \leq C\sqrt{\DD}\left|\log(1/\DD)\right|.
\end{equation}
for all $t > 0$. Consequently, there exists some $C > 0$ such that
\begin{equation}\label{eq:polynoimal_decay}
\HH = \DD_{\KL}({\bf p}(t)~||~{\bf p}^*) \leq \frac{C}{t + C}.
\end{equation}
\end{theorem}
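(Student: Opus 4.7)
My plan is to first establish the EDR $\HH \leq C\sqrt{\DD}|\log(1/\DD)|$ and then feed it into the identity $\HH'(t) = -\DD(t)$ to obtain polynomial decay.

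The key step is to introduce the geometric distribution $\mathbf{q} = \{q_n\}_{n\geq 0}$ with parameter $\bar{r}$, i.e., $q_n := (1-\bar{r})\bar{r}^n$. A direct computation using the explicit form of $\mathbf{p}^*$ yields the identity
$$\HH = \DD_{\KL}(\mathbf{p}~||~\mathbf{q}) + \bigl[f(\bar{r}) - f(r^*)\bigr],$$
where $f(r) := \log(1-r) + \mu\log r$. Since $f$ is strictly concave on $(0,1)$ with unique maximum at $r^* = \mu/(1+\mu)$, the bracket is nonpositive and consequently
$$\HH \leq \DD_{\KL}(\mathbf{p}~||~\mathbf{q}).$$
It remains to bound $\DD_{\KL}(\mathbf{p}~||~\mathbf{q})$ in terms of $\DD$.

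Setting $\delta_k := p_{k+1} - \bar{r}p_k$, the recursion starting from $p_0 = q_0 = 1 - \bar{r}$ gives $p_n - q_n = \sum_{k=0}^{n-1}\bar{r}^{n-k-1}\delta_k$. The same Cauchy--Schwarz plus exponential-moment argument as in the proof of Lemma \ref{lem1} also yields $\sum_k (k+1)|\delta_k| \leq C\sqrt{\DD}$. I would then truncate $\DD_{\KL}(\mathbf{p}~||~\mathbf{q})$ at some level $N$: on the head $\{n \leq N\}$ both $p_n$ and $q_n$ are bounded below by a multiple of $\bar{r}^N$, so the quadratic Taylor expansion of $x\log x$ near $x = 1$ produces a contribution of order $\DD \cdot \mathrm{poly}(N)\cdot \bar{r}^{-N}$; on the tail $\{n > N\}$, Corollary \ref{cor1} gives $p_n \leq MK^{-n}$, which in turn bounds $\sum p_n|\log p_n|$ (via $x\log(1/x)\leq 2\sqrt{x}$) and $\sum p_n|\log q_n|$ (via linear growth of $|\log q_n|$) by a quantity exponentially small in $N$. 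Choosing $N$ proportional to $\log(1/\DD)$ with a suitably small constant balances both contributions at the level $C\sqrt{\DD}|\log(1/\DD)|$, producing the advertised EDR.

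Once the EDR is in hand, the polynomial decay \eqref{eq:polynoimal_decay} follows by ODE comparison: inverting the EDR yields $\DD \gtrsim \HH^2/(\log(1/\HH))^2$, hence $\HH'(t) \lesssim -\HH(t)^2/(\log(1/\HH(t)))^2$, and separating variables and integrating produces a $1/(t+C)$ decay up to logarithmic corrections absorbed into the constant. The hard part of this program is the truncation balance: the head estimate blows up as $\bar{r}^{-N}$ whereas the tail decays as $K^{-N}$, and these become compatible only when the exponential moment constant $K$ from Corollary \ref{cor1} is tuned on the scale of $1/\bar{r}$. Proposition \ref{prop1} provides the uniform upper bound $\bar{r} \leq C_\mu < 1$ that makes such a choice admissible, and Lemma \ref{lem1} itself furnishes the complementary lower bound on $\bar{r}$ in the regime where $\DD$ is small (where $\bar{r}$ is forced close to $r^*$); a two-regime analysis, with the trivial bound $\HH \leq \HH(0)$ handling moderate $\DD$, then closes the argument cleanly.
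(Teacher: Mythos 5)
Your opening reduction is a genuinely nice observation that the paper does not use: writing $q_n=(1-\bar r)\bar r^n$ and using $\sum p_n=1$, $\sum n p_n=\mu$, one does get
$\HH=\DD_{\KL}(\mathbf p\,\|\,\mathbf q)+f(\bar r)-f(r^*)$ with $f(r)=\log(1-r)+\mu\log r$, and since $f$ is concave with its unique maximum at $r^*=\mu/(1+\mu)$ the bracket is nonpositive, so $\HH\leq \DD_{\KL}(\mathbf p\,\|\,\mathbf q)$. The companion fact $\sum_k(k+1)|\delta_k|\leq C\sqrt{\DD}$ and the induced $\ell^1$ control $\sum_n|p_n-q_n|\leq C\sqrt{\DD}$ are also correct (and, because $p_0=q_0$ exactly, this even avoids the $|p_0-p_0^*|$ term that the paper has to control via Lemma~\ref{lem1}). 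The concluding ODE comparison $\HH'=-\DD\lesssim -\HH^2/(\log\HH)^2\Rightarrow\HH\lesssim 1/(t+C)$ is fine.

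The gap is in the middle: the truncation-in-$n$ does not close. For the head $\{n\le N\}$ you need $\bar r^N\gtrsim\sqrt{\DD}$ so that the $\ell^1$ bound $|p_n-q_n|\leq C\sqrt\DD$ forces the ratios $p_n/q_n$ to stay bounded, which constrains $N\lesssim\tfrac{1}{2}\log(1/\DD)/|\log\bar r|$; for the tail $\{n>N\}$ you need $K^{-N}\lesssim\sqrt{\DD}$, i.e.\ $N\gtrsim\tfrac12\log(1/\DD)/\log K$. Compatibility forces $\log K\geq|\log\bar r|$, i.e.\ $K\geq 1/\bar r(t)$, but Corollary~\ref{cor1} only supplies some fixed $K\leq 1/\sup_t\bar r(t)$, which is on the \emph{wrong} side of $1/\bar r(t)$. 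You cannot ``tune $K$ on the scale of $1/\bar r$'' because $\bar r$ varies in time while $K$ must be a single constant uniform in $t$; Proposition~\ref{prop1} gives only an upper bound on $\bar r$ (and hence a lower bound on $1/\bar r$), which does not resolve the conflict. The ``moderate $\DD$ regime handled by $\HH\leq\HH(0)$'' observation is fine but does not touch the small-$\DD$ regime where the tension lives. The fix is to abandon the index-based split and adopt the paper's ratio-based split: decompose $\DD_{\KL}(\mathbf p\,\|\,\mathbf q)$ over $\{p_n\geq Mq_n\}$ versus its complement. On $\{p_n<Mq_n\}$ use the bound $(a-b)\log(a/b)+b-a\leq C|\log M|\,\frac{|a-b|^2}{a+b}$ together with your $\ell^1$ estimate to get $C\sqrt{\DD}\,|\log M|$; on $\{p_n\geq Mq_n\}$ use Corollary~\ref{cor1} and a fractional power of $p_n/q_n$ to absorb the weight $n$, yielding $C M^{-\theta}$; then optimize $M=(1/\DD)^{1/\theta}$. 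This recovers \eqref{eq:EED1} with no head/tail balancing at all, and combined with your reduction to $\DD_{\KL}(\mathbf p\,\|\,\mathbf q)$ would actually give a mild streamlining of the paper's argument.
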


\Proof We start from the decomposition
\begin{equation}\label{eq:decompose_H}
\HH = \sum_{n \geq 0} \left(p_n\,\log \frac{p_n}{p^*_n} + p^*_n - p_n\right) := \HH_1 + \HH_2,
\end{equation}
with
\begin{equation}\label{eq:H1}
\HH_1 := \sum_{n \colon p_n \geq Mp^*_n} \left(p_n\,\log \frac{p_n}{p^*_n} + p^*_n - p_n\right)
\end{equation}
and
\begin{equation}\label{eq:H2}
\HH_2 := \sum_{n \colon p_n < Mp^*_n} \left(p_n\,\log \frac{p_n}{p^*_n} + p^*_n - p_n\right)
\end{equation}
for some (potentially) large $M > 0$ to be determined. We can take $K > 1$ so that \[\sup\limits_{t > 0} \sum_{n \geq 0} K^n p_n(t) < \infty,\] and then let $\theta > 0$ to be small enough for which $(r^*)^{2\theta} > 1/K$. The estimate of $\HH_1$ can be carried out as follows:
\begin{align*}
\HH_1 &= \sum_{n \colon p_n \geq Mp^*_n} \left(p_n\,\log \frac{p_n}{p^*_n} + p^*_n - p_n\right) \\
&\leq C_1\sum_{n \colon p_n \geq Mp^*_n} p_n(n + C_2) \\
&\leq \frac{C_1}{M^\theta}\sum_{n \colon p_n \geq Mp^*_n} K^{\frac{n}{2}}p_n \left(\frac{p_n}{p^*_n}\right)^\theta \frac{n+C_2}{K^{\frac{n}{2}}}\\
&\leq \frac{C_3}{M^\theta}\left(\sum_{n\geq 0} K^n p^{2+2\theta}_n \right)^{\frac 12}\left(\sum_{n\geq 0} \frac{(n+C_2)^2}{\left(K(r^*)^{2\theta}\right)^n}\right)^{\frac 12} \leq C/M^\theta,
\end{align*}
in which $C_1, C_2, C_3 > 0$ and $C >0$ are some fixed constants. Next, notice that for each $n \in \mathbb N$, we have
\begin{equation}\label{eq:identity}
p_n - p^*_n = \sum_{0\leq k\leq n-1} \bar{r}^{n-k-1}\left(p_{k+1} - \bar{r}p_k\right) + \left(\bar{r}^n p_0 - (r^*)^n p^*_0\right).
\end{equation}
Then for some constant $C > 0$ whose value might change from line to line, \begin{align*}
\sum_{n\geq 0} \frac{|p_n - p^*_n|^2}{p_n + p^*_n} &\leq \sum_{n\geq 0} |p_n - p^*_n| \\
&\leq \sum_{k \geq 0} |p_{k+1}-\bar{r}p_k|\sum_{n\geq k+1} \bar{r}^{n-k-1} + \sum_{n\geq 0} \left|\bar{r}^n p_0 - (r^*)^n p^*_0\right| \\
&\leq C|p_0 - p^*_0| + C\left(\sum_{k\geq 0} \frac{|p_{k+1}-\bar{r}p_k|^2}{p_{k+1}+\bar{r}p_k}\right)^{\frac 12} \leq C\sqrt{\DD},
\end{align*}
where the last inequality follows from Lemma \ref{lem1} and the elementary fact \eqref{eq:elementary}. To control $\HH_2$, we observe that for any $a,b \in \mathbb{R}_+$, if $a \leq Mb$ for some $M > 0$, then $(a-b)\log(a/b) + b - a \leq C\,\left|\log M\right|\,\frac{|a-b|^2}{a+b}$ for some constant $C > 0$ (independent of $a,b$ and $M$). Thanks to this observation, we deduce that
\begin{equation}\label{eq:bound_H2}
\HH_2 \leq C\,\left|\log M\right|\sum_{n\geq 0} \frac{|p_n - p^*_n|^2}{p_n + p^*_n} \leq C\,\sqrt{\DD}\left|\log M\right|.
\end{equation}
Putting the estimates on $\HH_1$ and $\HH_2$ together, we obtain
\begin{equation}\label{eq:bound_H}
\HH = \HH_1 + \HH_2 \leq \frac{C}{M^\theta} + C\sqrt{\DD}\left|\log M\right|
\end{equation}
for some constant $C > 0$, from which the advertised bound \eqref{eq:EED1} on $\HH$ follows by taking $M = (1/\DD)^{1/\theta}$. Finally, \eqref{eq:polynoimal_decay} follows from the simple differential inequality \[\frac{\dd}{\dd t} \HH = -\DD \leq -C\frac{\HH^2}{(\log \HH)^2}\] for some constant $C > 0$. \qed
\subsection{The refined entropy-entropy dissipation estimate}\label{subsec:2.3}
Thanks to the decay of the relative entropy \eqref{eq:polynoimal_decay}, there exists some $t_* > 0$ such that $|\bar{r}(t) - r^*|$ is sufficiently small for all $t\geq t_*$. In particular, we may assume that for $t \geq t_*$, $\frac{r^*}{\bar r}\leq \frac{1+K}{2}$ and $\frac{\bar r}{r^*}\leq \frac{1+K}{2}$ for some $K > 1$ thanks to Corollary \ref{cor1}. These observations enable us to refine the entropy-entropy dissipation estimate \eqref{eq:EED1} at large times, leading to an improved convergence guarantee than \eqref{eq:polynoimal_decay} when time $t$ becomes large. The main result of this section is summarized in the following theorem, which can be viewed as the complete version of Theorem \ref{thm} provided in the beginning of section \ref{sec:2}.

\begin{theorem}\label{thm2}
Assume that ${\bf p} \in V_\mu$ is the unique solution to the nonlinear system of ODEs \eqref{eq:law_limit}. Define the time $t^*$ such that for all $t\geq t^*$
\[
\frac{r^*}{\bar r}\leq \frac{1+K}{2} ~~\text{and}~~ \frac{\bar r}{r^*}\leq \frac{1+K}{2},
\]
with $K$ given by Corollary~\ref{cor1}. We then have for all $t\geq t^*$ that
\[
\HH \leq C\,\DD\,\left|\log(1/\DD)\right|
\]
for some positive constant~$C$ independent of $t$. Consequently, there exists some $C > 0$ such that
\begin{equation}\label{eq:almost_exponential_decay}
\HH = \DD_{\KL}({\bf p}(t)~||~{\bf p}^*) \leq C\expo^{-C\sqrt{t}}
\end{equation}
whenever $t \geq t_*$. In particular, taking into account of the estimate \eqref{eq:polynoimal_decay}, \eqref{eq:almost_exponential_decay} holds for all $t > 0$ (with a possibly different constant $C > 0$).
\end{theorem}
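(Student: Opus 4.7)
The plan is to rerun the decomposition $\HH=\HH_1+\HH_2$ from Theorem~\ref{thm1}, but to upgrade the crude $L^{1}$ control used there to a quadratic estimate valid once $\bar r$ has entered a prescribed neighborhood of $r^{*}$. The improvement is driven by the fact that once we are allowed to square it, Lemma~\ref{lem1} reads $|\bar r - r^{*}|^{2}\le C\DD$ rather than $|\bar r - r^{*}|\le C\sqrt\DD$, so every $\sqrt\DD$ appearing in the proof of Theorem~\ref{thm1} through Lemma~\ref{lem1} will be squared.

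I would keep the decomposition and the tail estimate of Theorem~\ref{thm1} untouched: the bound $\HH_{1}\le C/M^{\theta}$ depends only on the uniform exponential moment of Corollary~\ref{cor1} and works for any $\bar r$. The pointwise inequality $(a-b)\log(a/b)+b-a\le C|\log M|(a-b)^{2}/(a+b)$ still gives
\[
\HH_{2}\le C|\log M|\sum_{n\ge 0}\frac{(p_n-p_n^{*})^{2}}{p_n+p_n^{*}}.
\]
The essential refinement is to replace the bound $\sum_{n}(p_n-p_n^{*})^{2}/(p_n+p_n^{*})\le C\sqrt\DD$ used in Theorem~\ref{thm1} by the quadratic bound $C\DD$.

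The main obstacle is establishing this quadratic inequality, and this is where the logarithmic Sobolev inequality borrowed from \cite{canizo_trend_2017} intervenes. I would use the splitting $p_n-p_n^{*}=A_n+B_n$ with $A_n=\sum_{k=0}^{n-1}\bar r^{\,n-k-1}(p_{k+1}-\bar r p_k)$ and $B_n=\bar r^{\,n}p_0-(r^{*})^{n}p_0^{*}$. The term $B_n$ is the difference of two geometric laws with parameters $\bar r$ and $r^{*}$, so once $\bar r$ is close to $r^{*}$ a Taylor expansion in the parameter combined with Lemma~\ref{lem1} gives $\sum_n B_n^{2}/(p_n+p_n^{*})\le C(\bar r-r^{*})^{2}\le C\DD$, where the series converges because of Corollary~\ref{cor1} together with $K$ being chosen close enough to $1$. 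For $A_n$, a Cauchy--Schwarz inequality against the weight $1/(p_{k+1}+\bar r p_k)$ bounds $A_n^{2}$ by a local Dirichlet form whose sum in $k$ is already dominated by~$\DD$ via~\eqref{eq:elementary}; interchanging the two summations and using the lower bound $p_n+p_n^{*}\ge(1-r^{*})(r^{*})^{n}$ converts the total into a multiple of $\DD$, provided $\bar r/r^{*}$ is in a sufficiently tight neighborhood of $1$, which is exactly the hypothesis for $t\ge t^{*}$. The delicate part is arranging the Cauchy--Schwarz weights so that no $\sqrt\DD$ factor reappears after interchanging the sums, and this is precisely the content of the log-Sobolev machinery of \cite{canizo_trend_2017}.

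Combining the two pieces yields $\HH\le C/M^{\theta}+C|\log M|\DD$, and the optimal choice $M^{\theta}=1/\DD$ produces the advertised inequality $\HH\le C\DD|\log(1/\DD)|$. Feeding this back into the dissipation identity $\dot\HH=-\DD$ gives $\dot\HH\le -C\HH/|\log(1/\HH)|$; the substitution $u=-\log\HH$ turns it into $\dot u\ge C/u$, equivalently $(u^{2})'\ge 2C$, so that $u(t)^{2}\gtrsim t$ and hence $\HH(t)\le C\expo^{-C\sqrt t}$ for $t\ge t^{*}$. Gluing this to the polynomial bound \eqref{eq:polynoimal_decay} valid on $[0,t^{*}]$ extends the estimate to all $t>0$ at the cost of adjusting the constant.
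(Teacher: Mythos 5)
Your plan — keep the decomposition $\HH=\HH_1+\HH_2$ of Theorem~\ref{thm1} and upgrade the intermediate estimate $\sum_n (p_n-p_n^*)^2/(p_n+p_n^*)\lesssim\sqrt\DD$ to $\lesssim\DD$ — does not go through as sketched, and the missing step is not a technicality but the central idea of the paper's proof.

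To see the problem concretely, follow your own Cauchy--Schwarz suggestion for $A_n=\sum_{k<n}\bar r^{\,n-k-1}(p_{k+1}-\bar rp_k)$ with the weight $w_k=p_{k+1}+\bar r p_k$. You obtain
\[
A_n^2\leq \Bigl(\sum_{k<n}\bar r^{\,n-k-1}w_k\Bigr)\Bigl(\sum_{k<n}\bar r^{\,n-k-1}\frac{(p_{k+1}-\bar rp_k)^2}{w_k}\Bigr),
\]
and after dividing by $p_n+p_n^*\gtrsim (r^*)^n$ and interchanging sums, the Dirichlet form reappears \emph{with the weight $(r^*)^{-(k+1)}$ attached}:
\[
\sum_n\frac{A_n^2}{p_n+p_n^*}\ \lesssim\ \sum_k\frac{(p_{k+1}-\bar rp_k)^2}{p_{k+1}+\bar rp_k}\cdot\frac{1}{(r^*)^{k+1}}\sum_{m\geq 0}\Bigl(\frac{\bar r}{r^*}\Bigr)^m.
\]
The geometric weight $(r^*)^{-k}$ is unbounded and is \emph{not} controlled by $\DD$; worse, the inner sum already diverges as soon as $\bar r>r^*$. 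No choice of Cauchy--Schwarz weights fixes this, because the underlying ``log-Sobolev constant'' you are implicitly asking for is $\sup_k (\sum_{n\leq k}\nu_n^{-1})/\Psi^{-1}(1/\sum_{n>k}\mu_n)$ in the Muckenhoupt criterion of \cite{canizo_trend_2017}, and with $\mu_n=p_n^*$ and $\nu_n=\bar p_{n+1}$ that supremum contains the factor $(r^*/\bar r)^k$, which is unbounded in $k$ whenever $\bar r\neq r^*$. Invoking ``the log-Sobolev machinery'' does not rescue the argument: that machinery produces an entropy--dissipation inequality, not the $\chi^2$--dissipation inequality $\sum_n(p_n-p_n^*)^2/(p_n+p_n^*)\lesssim\DD$ that your decomposition requires, and even the entropy version fails with the untruncated reference measure for the reason just given.

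The paper's proof is built around exactly the ingredient you are missing: a truncation of the reference measure at a finite level $N$, chosen adaptively in terms of $\DD$. It introduces the interpolated measure $q_n$ equal to $p_n^*$ for $n\leq N$ and $\bar p_n/n$ for $n>N$, the associated normalized entropy $H_{int}$, and the modified dissipation $\DD^*$ with weight $\bar p_{n+1}$; it shows $\DD^*\lesssim\DD$ under the hypothesis $\bar r/r^*,\,r^*/\bar r\leq(1+K)/2$, then applies Theorem~2.3 of \cite{canizo_trend_2017} with $\mu_n=q_n/\bar q$ and $\nu_n=\bar p_{n+1}$ to get $H_{int}\lesssim(1+N(r^*)^N/\bar r^N)\,\DD$. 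The dangerous factor $(r^*/\bar r)^N$ is harmless precisely because $N$ is then chosen of order $|\log(1/\DD)|$, where Lemma~\ref{lem1} guarantees $(r^*/\bar r)^N\lesssim 1$; the truncation error $\HH-H_{int}$ is $O(L^{-N})\lesssim\DD$. Your proposal has no analogue of this $N$-dependent truncation and optimization, which is why the constant in your Cauchy--Schwarz bound is infinite rather than a finite $|\log\DD|$. The final ODE step $\dot\HH\leq -C\HH/|\log(1/\HH)|\Rightarrow\HH\leq C\expo^{-C\sqrt t}$ is fine and matches the paper.
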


\begin{proof}
  We adapt the ideas of~\cite{canizo_trend_2017} here, and more specifically their Theorem~2.3, which shows an equivalence between the validity of certain weighted logarithmic Sobolev inequalities and a Muckenhoupt-type criterion.

  For any given $N$,  we introduce an interpolated equilibrium
  \[
  q_n=\left\{\begin{aligned}
  &p_n^*,\quad n\leq N,\\
  &\frac{\bar p_n}{n},\quad n>N,
  \end{aligned}\right.
  \]
  where we define $\bar p_n=p_0\,(\bar r)^n$. We accordingly introduce the interpolated relative entropy with respect to $\{\bar p_n\}_{n\geq 0}$
  \[
H_{int}=\sum_n \frac{q_n}{\bar q}\,\frac{p_n}{p_n^*}\,\log \frac{p_n}{m\,p_n^*},
\]
where we normalized the various measures
\[
\bar q=\sum_{n\geq 0} q_n,\quad m=\sum_{n\geq 0} \frac{q_n}{\bar q}\,\frac{p_n}{p_n^*}.
\]
We immediately remark that by Proposition~\ref{prop1}, there exists some constant~$L>1$ uniform in time such that
\begin{equation}
|\bar q-1|\leq \sum_{n>N} (p_n^*+\bar p_n/n)\leq C\,((r^*)^N+\bar r^N)\leq \frac{C}{L^N}.\label{q-1}
\end{equation}
Furthermore
\begin{equation}
\begin{split}
  |m-1|&\leq \frac{1}{\bar q}\,\sum_{n\geq 0} \frac{|q_n-p_n^*|}{p_n^*}\,p_n+|1-1/\bar{q}|\\
  &\leq \frac{C}{L^N}+\frac{1}{\bar q}\,\sum_{ n\geq N} (1+\bar p_n/p_n^*)\,p_n\\
  & \leq \frac{C}{L^N}+C\,\sum_{n\geq N} \left(1+\left(\frac{1+K}{2}\right)^n\right)\,p_n\leq \frac{C}{L^N},
\end{split}\label{m-1}
\end{equation}
for some constant $L>1$, thanks to Proposition~\ref{prop1} and Corollary~\ref{cor1}.

We now observe that, since $\bar r\,\bar p_n=\bar p_{n+1}$,
\[
\begin{split}
  \DD&=\sum_{n\geq 0} (\bar r\,p_n-p_{n+1})\,(\log (\bar r\,p_n)-\log p_{n+1})\\
  &=\sum_{n\geq 0} \bar p_{n+1}\,(p_n/\bar p_n-p_{n+1}/\bar p_{n+1})\,(\log (p_n/\bar p_n) )-\log (p_{n+1}/\bar p_{n+1}))\\
  &\geq \sum_{n\geq 0} \bar p_{n+1}\,\left|\sqrt{\frac{p_n}{\bar p_{n}}}-\sqrt{\frac{p_{n+1}}{\bar p_{n+1}}} \right|^2,
\end{split}
\]
due to the elementary inequality $(a-b)\,(\log a-\log b)\geq (\sqrt{a}-\sqrt{b})^2$ for all $a,b > 0$.

We denote accordingly
\[
\bar{\DD} =\sum_{n\geq 0} \bar p_{n+1}\,\left|\sqrt{\frac{p_n}{\bar p_{n}}}-\sqrt{\frac{p_{n+1}}{\bar p_{n+1}}} \right|^2 ~~\text{and}~~ {\DD}^*=\sum_{n\geq 0} \bar p_{n+1}\,\left|\sqrt{\frac{p_n}{p_{n}^*}}-\sqrt{\frac{p_{n+1}}{p_{n+1}^*}} \right|^2,
\]
so we immediately have that $\bar{\DD} \leq \DD$. Moreover, denote $f_n=\sqrt{p_n/p_n^*}$, $g_n=\sqrt{p_n/\bar p_n}$ and observe that
\[
\begin{split}
  {\DD}^*-\bar{\DD}&=\sum_{n\geq 0} \bar p_{n+1}\,\left(f_n-f_{n+1}+g_n-g_{n+1}\right)\,(f_n-g_n-(f_{n+1}-g_{n+1}))\\
  &\leq C\,\left(\sqrt{\bar{\DD}}+\sqrt{{\DD}^*}\right)\,\left(\sum_{n\geq 0} \bar p_{n+1}\,|f_n-g_n|^2\right)^{1/2},
\end{split}
\]
by Cauchy-Schwarz inequality and using that $(a+b)^2\leq 2(a^2+ b^2)$ together with $\bar p_{n+2}=\bar r\,\bar p_{n+1}$.

On the other hand, by the definitions of $\bar p_n$ and $p_n^*$,
\[
|f_n-g_n|=\frac{\sqrt{p_n}\,|\bar p_n-p_n^*|}{\sqrt{\bar p_n}\,\sqrt{p_n^*}\,(\sqrt{\bar p_n}+\sqrt{p_n^*})}\leq C\,(1+n)\,\frac{\sqrt{p_n}\,(\bar p_n+p_n^*)}{\sqrt{\bar p_n}\,\sqrt{p_n^*}\,(\sqrt{\bar p_n}+\sqrt{p_n^*})} |\bar r-r^*|
\]
By Lemma~\ref{lem1},
\[
\sum_{n\geq 0} \bar p_{n+1}\,|f_n-g_n|^2\leq C\,\DD\,\sum_{n\geq 0} (1+n)^2\,p_n\,\left(1+\frac{\bar p_n}{p_n^*}\right).
\]
Using the assumptions of Theorem~\ref{thm2} and Corollary~\ref{cor1}, we finally obtain
\[
\sum_{n\geq 0} \bar p_{n+1}\,|f_n-g_n|^2\leq C\,\DD\,\sum_{n\geq 0} (1+n)^2\,p_n\,\left(1+\left(\frac{1+K}{2}\right)^n\right)\leq C\,\DD.
\]
This implies that, since $\bar{\DD} \leq \DD$,
\[
{\DD}^* = \bar{\DD} + {\DD}^*-\bar{\DD} \leq \DD+C\,\sqrt{\DD}\,(\sqrt{\DD}+\sqrt{{\DD}^*}),
\]
or
\begin{equation}
{\DD}^*\leq C\,{\DD}.\label{D*D}
  \end{equation}
We are now ready to apply Theorem 2.3 of~\cite{canizo_trend_2017} with $f_n=\sqrt{p_n/p_{n}^*}$, $\mu_n=q_n/\bar q$, $\nu_n=\bar p_{n+1}$. We estimate
\[
\sum_{n=0}^k \frac{1}{\nu_n}=\frac{1}{p_0}\,\sum_{n=0}^k \frac{1}{\bar r^{n+1}}\leq \frac{C}{\bar r^{k+1}},
\]
while
\[
\sum_{n=k+1}^\infty \mu_n \leq C\,\left(\sum_{n=k+1}^N p_n^*+\sum_{n=N+1}^\infty \frac{\bar p_n}{n}\right) \leq C\,((r^*)^{k+1}\,\mathbbm{1}_{k<N}+\bar r^{k+1}/(k+1)).
\]
Denoting $\Psi(x)=|x|\,\log(1+|x|)$, we hence have that
\[\Psi^{-1}\left(\frac{1}{\sum_{n=k+1}^\infty \mu_n}\right)\geq \frac{1}{C}\,\min\left(\frac{1}{(r^*)^{k+1}}\,\mathbbm{1}_{k<N}, \frac{k+1}{\bar r^{k+1}}\right).\]

\noindent Therefore the constant \[B_1 := \sup\limits_{k\geq m} \frac{\sum_{n=0}^k \frac{1}{\nu_n}}{\Psi^{-1}\left(\frac{1}{\sum_{n=k+1}^\infty \mu_n}\right)}\] introduced in~\cite{canizo_trend_2017} is bounded by
\[B_1\leq C\,\sup_k\, \max\left(\frac{1}{k+1},\, \frac{(r^*)^{k+1}}{\bar r^{k+1}}\,\mathbbm{1}_{k<N}\right)\leq C\,(1+N\,(r^*)^N/\bar r^N).\]
From the definition of $\mu_n$ and~\eqref{q-1}, we know that $m$ is of order $1$ so that the constant \[B_2 := \frac{\sum_{n=0}^{m-1} \frac{1}{\nu_n}}{\Psi^{-1}\left(\frac{1}{\sum_{n=0}^{m-1} \mu_n}\right)} \] defined in Theorem 2.3 of \cite{canizo_trend_2017} has the bound $B_2\leq C$. Hence by Theorem 2.3 in~\cite{canizo_trend_2017}
\begin{equation}
H_{int} \leq C\,(1+N\,(r^*)^N/\bar r^N)\,{\DD}^*\leq C\,(1+N\,(r^*)^N/\bar r^N)\,\DD,\label{HiD}
\end{equation}
where the last inequality follows from \eqref{D*D}.

We may also bound $\HH$ in terms of $H_{int}$. Indeed, from the definition of $q_n$
\[
\begin{split}
  \HH&=\sum_{n\geq 0} p_n^*\,\left(\frac{p_n}{p_n^*}\,\log \frac{p_n}{p^*_n}+1-\frac{p_n}{p_n^*}\right)\leq \sum_{n\leq N} q_n\,\left(\frac{p_n}{p_n^*}\,\log \frac{p_n}{p^*_n}+1-\frac{p_n}{p_n^*}\right)+C\,\sum_{n>N} n\,p_n\\
  &\leq  \sum_{n\geq 0} q_n\,\left(\frac{p_n}{p_n^*}\,\log \frac{p_n}{p^*_n}+1-\frac{p_n}{p_n^*}\right)+\frac{C}{L^N},
  \end{split}
\]
for some constant $L>1$, using again Corollary~\ref{cor1} and the fact that $x\,\log x+1-x\geq 0$ for all $x\geq 0$.

Furthermore, according to the definitions of $m$ and $H_{int}$
\[
\begin{split}
&  \sum_{n\geq 0} \frac{q_n}{\bar q}\,\left(\frac{p_n}{p_n^*}\,\log \frac{p_n}{p^*_n}+1-\frac{p_n}{p_n^*}\right)=\sum_{n\geq 0} \frac{q_n}{\bar q}\,\frac{p_n}{p_n^*}\,\log \frac{p_n}{p^*_n}+1-m\\
  &\qquad=\sum_{n\geq 0} \frac{q_n}{\bar q}\,\frac{p_n}{p_n^*}\,\log \frac{p_n}{m\,p^*_n}+m\,\log m+1-m=H_{int}+m\,\log m+1-m.
\end{split}
\]
Note as well that by~\eqref{m-1}, $m\,\log m+1-m\leq C\,|m-1|^2\leq \frac{C}{L^N}$, so that eventually from~\eqref{HiD}
\begin{equation}
\HH\leq \frac{H_{int}}{\bar q}+\frac{C}{L^N}\leq C\,(1+N\,(r^*)^N/\bar r^N)\,\DD+\frac{C}{L^N}.\label{HD}
\end{equation}
It mostly remains to optimize in $N$ in~\eqref{HD}. For this we note that from Lemma~\ref{lem1}
\[
(r^*)^N/\bar r^N=\exp\left(N\,\log\left(1+\frac{r^*-\bar r}{\bar r}\right)\right)\leq \exp\left(C\,N\,\frac{\sqrt{\DD}}{\bar r}\right).
\]
Therefore as long as we restrict ourselves to $N\leq \frac{C}{\sqrt{\DD}}$, we have that $(r^*)^N/\bar r^N\leq C$. We hence choose $N=N_0\,\left|\log (1/\DD)\right|$, which satisfies this constraint together with $C/L^N\leq \DD$ provided $N_0$ is large enough. We then find
\[
\HH\leq C\,\DD\,\left|\log(1/\DD)\right|,
\]
as claimed. Finally, \eqref{eq:almost_exponential_decay} follows from the basic differential inequality \[\frac{\dd}{\dd t} \HH = -\DD \leq -C\frac{\HH}{\left|\log (1/\HH)\right|},\] which completes the proof.
\end{proof}

\begin{remark}
In literature, an almost-exponential decay result to equilibrium of the type \eqref{eq:almost_exponential_decay} is sometimes called Jabin
and Niethammer’s rate of decay to equilibrium \cite{canizo_trend_2017}, owing to their work \cite{jabin_rate_2003} on quantitative entropy method for the Becker–D\"oring equations.
\end{remark}

We end this section with several numerical experiments demonstrating the entropic convergence of ${\bf p}(t)$ to ${\bf p}^*$. We use $\mu = 10$ for the model. For the discretization of the nonlinear ODE system \eqref{eq:law_limit}, we use the first 501 components (i.e., $(p_0(t),\ldots,p_{500}(t)$) to characterize the distribution ${\bf p}(t)$. We will test our bound \eqref{eq:almost_exponential_decay} using two 'typical' initial conditions to be described in detail below. The classical fourth-order Runge-Kutta method is used to discretize the ODE system \eqref{eq:law_limit} with the time step $\Delta t = 0.01$.

We plot in figure \ref{fig:entropic_decay}-left the evolution of the relative entropy $\DD_{\KL}({\bf p(t)}~||~{\bf p}^*)$ over $0\leq t \leq 200$, starting from the initial datum $p_\mu(0) = 1$ and $p_i(0) = 0$ for $i\neq \mu$. This choice of initial wealth distribution corresponds to the initial configuration prescribed by $S_i(0) = \mu$ for all $1\leq i\leq N$ at the level of $N$-agents system. We observe that the ansatz
\begin{equation}\label{eq:ansatz}
\DD_{\KL}({\bf p(t)}~||~{\bf p}^*) \approx C_1\,\expo^{-C_2\sqrt{t}},
\end{equation}
where $C_1$ and $C_2$ are positive constants, fits extremely well with the evolution of the relative entropy over the time span under investigation. Figure \ref{fig:entropic_decay}-right records the evolution of the relative entropy over $0\leq t \leq 2000$, now starting from the initial distribution $p_0(0) = 1 - \frac{\mu}{500} = 0.98$, $p_{500}(0) = 0.02$, and $p_i(0) = 0$ for $i \notin \{0,500\}$. This choice of initial distribution can be linked to the initial configuration given by $S_i(0) = 0$ for all $1\leq i\leq N-1$ and $S_N(0) = N\cdot \mu = 5000$ in the $N$-agents system with $N = 500$, in which case we put all the dollars in the hand of a single agent initially. Again, the ansatz \eqref{eq:ansatz} fits extremely well with the evolution of the relative entropy in our numerical simulation.

\begin{figure}[!htb]
  \begin{subfigure}{0.47\textwidth}
    \centering
    \includegraphics[scale=0.6]{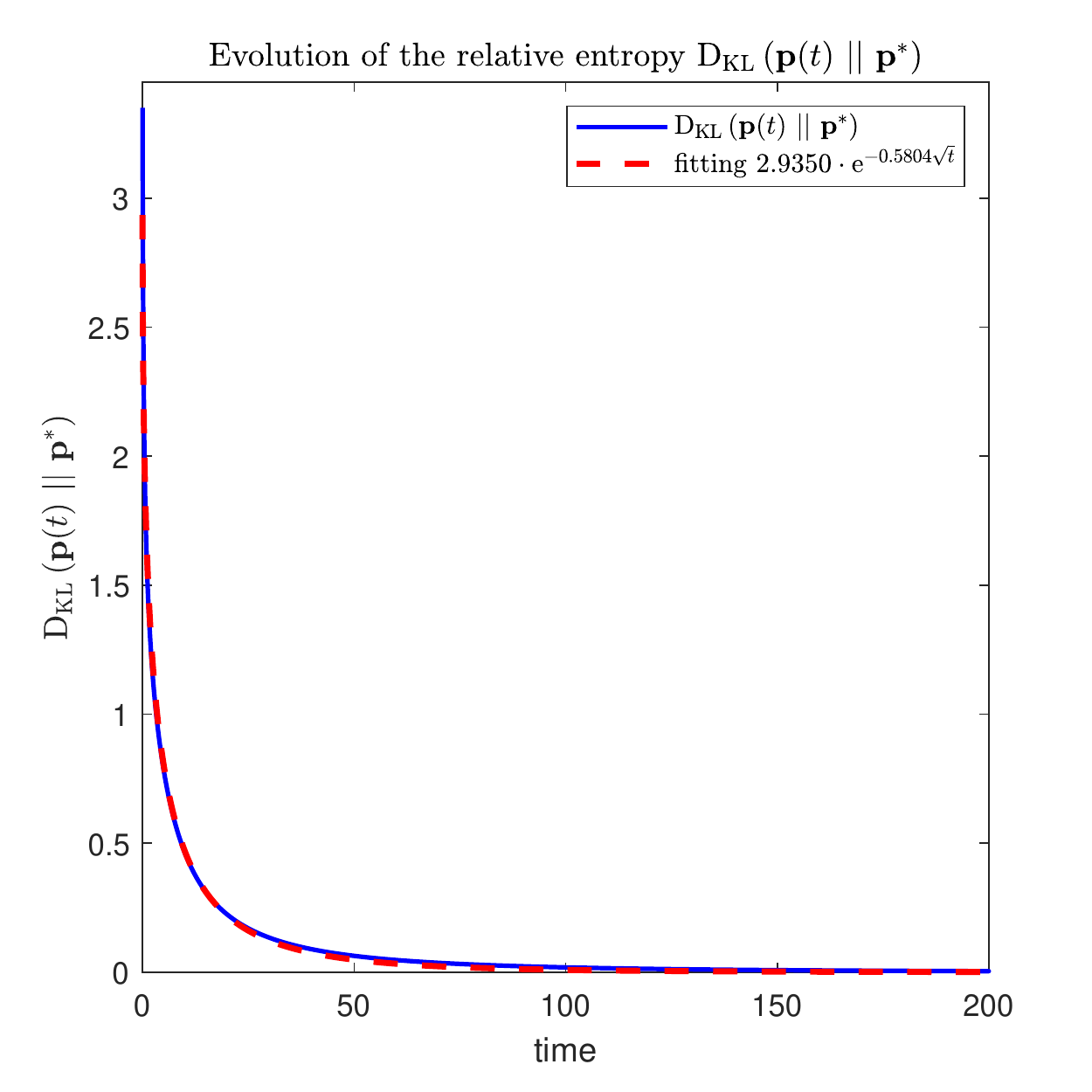}
  \end{subfigure}
  \hspace{0.1in}
  \begin{subfigure}{0.47\textwidth}
    \centering
    \includegraphics[scale=0.6]{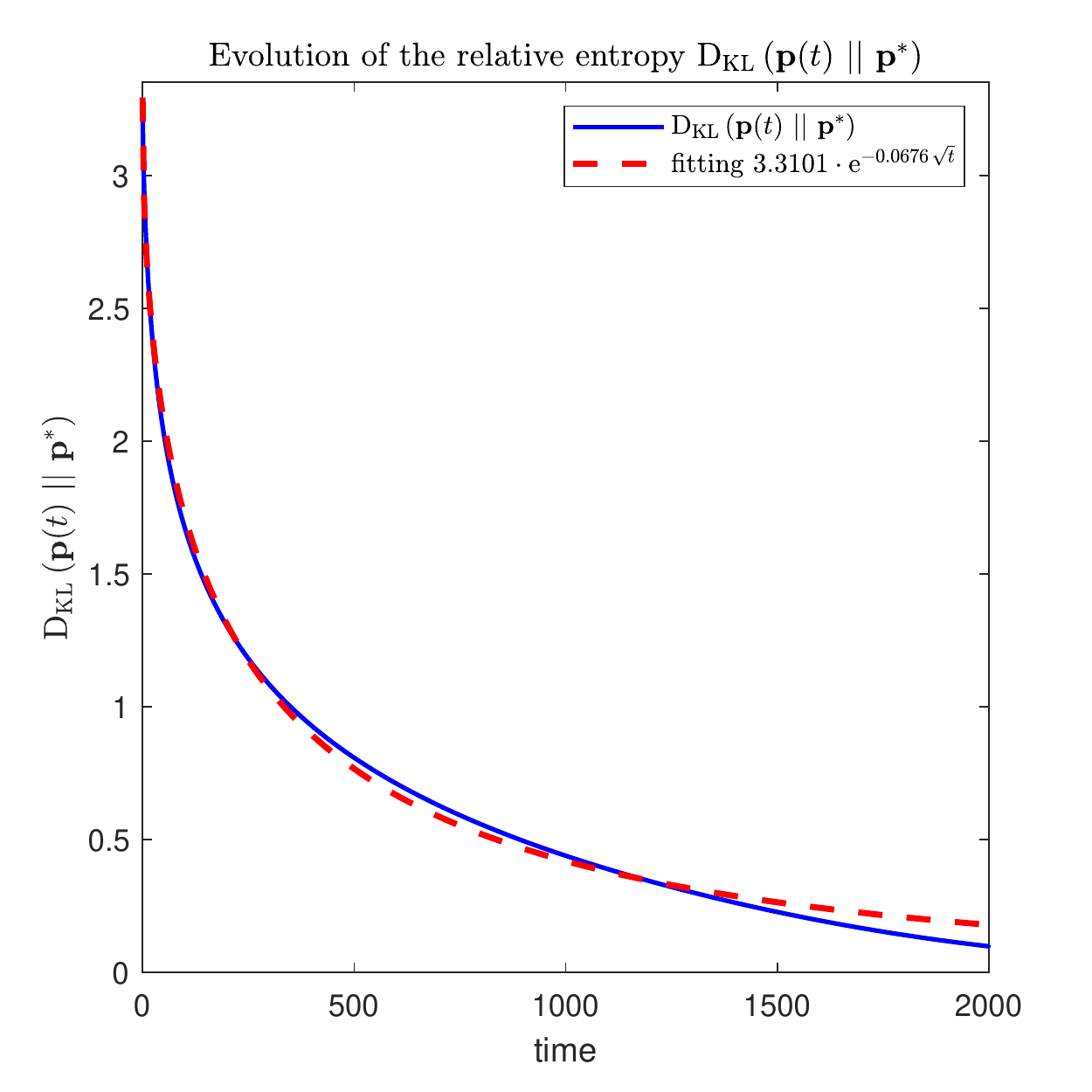}
  \end{subfigure}
  \caption{{\bf Left}: Evolution of the relative entropy $\DD_{\KL}({\bf p(t)}~||~{\bf p}^*)$ over $0\leq t \leq 200$, starting from the initial datum $p_\mu(0) = 1$ and $p_i(0) = 0$ for $i\neq \mu$. {\bf Right}: Evolution of the relative entropy over $0\leq t \leq 2000$, starting from the initial condition $p_0(0) = 1 - \frac{\mu}{500} = 0.98$, $p_{500}(0) = 0.02$, and $p_i(0) = 0$ for $i \notin \{0,500\}$. The decay is of the form \eqref{eq:ansatz} in both cases as predicted by Theorem \ref{thm2}.}
  \label{fig:entropic_decay}
\end{figure}

\section{Uniform-in-time propagation of chaos}
\setcounter{equation}{0}

Let us introduce the empirical measure $\rho_{\emp}$ of the unbiased exchange dynamics \eqref{unbiased_exchange}, denoted by
\begin{equation}\label{eq:empirical_measure}
\rho_{\emp}(t) = \frac{1}{N}\sum_{1\leq i\leq N} \delta_{S_i(t)}.
\end{equation}
Recall that ${\bf q} = (q_0,q_1,\ldots)$ represents the individual coordinates of the measure $\rho_{\emp}$ and $r := 1 - q_0$. For each test function $\psi$, we have
\begin{equation}\label{eq:weak_formulation}
\begin{aligned}
&\frac{\dd}{\dd t} \mathbb{E}\left[\psi({\bf q})\right] = \\
&\mathbb{E}\left[\sum_{\ell \geq 1, m\geq 0} Nq_\ell\left(q_m - \frac{1}{N}\mathbbm{1}\{\ell = m\}\right)\left(\psi\left({\bf q} + \frac{1}{N}\left(\delta_{\ell-1} + \delta_{m+1} - \delta_\ell - \delta_m\right)\right) - \psi({\bf q})\right)\right].
\end{aligned}
\end{equation}
The main result (see Theorem \ref{thm4}) we want to prove in this section is that for all $t > 0$ and all $N \geq 1$, we have that
\begin{equation}\label{eq:L1chaos_restate}
\mathbb{E}\,\|{\bf q}(t) - {\bf p}(t)\|^2_{\ell^1} \leq \frac{C}{1+\log N}.
\end{equation}
By an interpolation argument, such $\ell^1$ type uniform in time propagation of chaos also leads to a entropic uniform in time propagation of chaos, i.e.,
\begin{equation}\label{eq:entropic_chaos_restate}
\mathbb{E}\,\sum_n q_n(t)\,\log\frac{q_n(t)}{p_n(t)}\leq C\,\frac{\log \log N}{1+\log N}.
\end{equation}

\subsection{Mimicking the continuous entropy dissipation}\label{subsec:3.1}
The first step is to derive an appropriate entropy dissipation result by showing:
\begin{lemma}
For any solution to the unbiased exchange dynamics~\eqref{unbiased_exchange}, we have that
\[
\frac{\dd}{\dd t} \mathbb{E}\left[\sum_{n\geq 0} q_n\log q_n\right]\leq \sum_{\ell\geq 1,\,m\geq 0}q_\ell\,q_m\,\log \frac{(q_{\ell-1}+1/N)\,(q_{m+1}+1/N)}{q_\ell\,q_m}+\frac{C\,\log N}{\sqrt{N}},
\]
and that
\[
\begin{split}
  &\frac{\dd}{\dd t} \mathbb{E}\left[\sum_{n\geq 0} q_n\log q_n\right]\\
&\quad\leq -\mathbb{E}\,\sum_{n\geq 0} (\bar r\,(q_n+1/N)-(q_{n+1}+1/N))\,L\left(\frac{\bar r\,(q_n+1/N)}{q_{n+1}+1/N}\right)
  +\frac{C\,\log N}{\sqrt{N}},
\end{split}
\]
where we recall that $\bar r=\sum_{n\geq 1} q_n$ and we define the function $L(x)$ as a truncated version of the natural logarithmic function:
\[
L(x)=\log x\quad\mbox{if}\ \frac{1}{\expo^2}\leq x\leq \expo^2,\quad L(x)=2\quad\mbox{if}\  x\geq \expo^2,\quad L(x)=-2\quad\mbox{if}\  x\leq \frac{1}{\expo^2}.
\]
\label{agentsentropy}
\end{lemma}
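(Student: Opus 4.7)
The natural starting point is to apply the weak formulation~\eqref{eq:weak_formulation} with the functional $\psi(\mathbf{q}) = \sum_n q_n \log q_n$. Each ordered pair $(\ell, m)$ with $\ell \geq 1$ produces a transition that modifies at most four coordinates, namely $q_{\ell-1}, q_\ell, q_m$ and $q_{m+1}$, each by $\pm 1/N$. The aim is to derive a per-jump upper bound of the shape $\Delta_{\ell,m}\psi \leq \tfrac{1}{N}\log\frac{(q_{\ell-1}+1/N)(q_{m+1}+1/N)}{q_\ell q_m} + \text{(small error)}$, and then sum it against the rates $Nq_\ell(q_m - \mathbbm{1}\{\ell=m\}/N)$ furnished by~\eqref{eq:weak_formulation}.

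The per-jump estimate rests on two elementary calculus inequalities: $(x+h)\log(x+h) - x\log x \leq h + h\log(x+h)$, which follows from $x\log(1+h/x) \leq h$; and $(x-h)\log(x-h) - x\log x \leq -h\log x$, which follows from $(x-h)\log(1-h/x) \leq 0$. Both hold for $0 < h \leq x$. Applied with $h = 1/N$ to the four affected coordinates and summed, they yield $\Delta_{\ell,m}\psi \leq \tfrac{1}{N}\log\frac{(q_{\ell-1}+1/N)(q_{m+1}+1/N)}{q_\ell q_m} + \tfrac{2}{N}$ whenever $q_\ell, q_m \geq 1/N$. The automatic $+1/N$ shift in the numerator is crucial, since it keeps the bound finite even when $q_{\ell-1}$ or $q_{m+1}$ vanishes. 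The degenerate case $q_\ell = 1/N$, in which the giving class becomes empty after the jump, is handled by direct computation: $|(1/N)\log(1/N)| = (\log N)/N$, so $|\Delta_{\ell,m}\psi| \leq C(\log N)/N$ in that regime.

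Summing the per-jump bound against the rates produces the first inequality up to three error sources: (i) the additive $2/N$ slack; (ii) the diagonal correction $\mathbbm{1}\{\ell=m\}/N$ in the pre-factor; and (iii) the degenerate-case contributions from $q_\ell = 1/N$. Each is organized by a dichotomy based on whether the relevant component exceeds the threshold $1/\sqrt{N}$: large components ($> 1/\sqrt{N}$) number at most $\sqrt{N}$ and contribute favorably-sized errors, while small components have pre-factors $q_\ell q_m$ that absorb the $\log N$ factor. Balancing the two regimes collapses the total error to $C(\log N)/\sqrt{N}$. The principal obstacle is the careful book-keeping of how these error sources interact; no single step is difficult, but the cumulative error must be tracked at the right scale.

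The second inequality is extracted from the first by algebraic rearrangement and truncation. Using $\sum_{m \geq 0} q_m = 1$ and $\sum_{\ell \geq 1} q_\ell = \bar r$, the main term reorganizes edge by edge as
\[
\sum_{\ell, m} q_\ell q_m \log\frac{(q_{\ell-1}+1/N)(q_{m+1}+1/N)}{q_\ell q_m} = \sum_n \left[ q_{n+1}\log\frac{q_n+1/N}{q_{n+1}} + \bar r\, q_n \log\frac{q_{n+1}+1/N}{q_n}\right].
\]
Replacing $q_n$ by $q_n + 1/N$ and $q_{n+1}$ by $q_{n+1} + 1/N$ inside the remaining logarithms introduces errors of order $1/N$ per index, absorbable into the $(\log N)/\sqrt{N}$ budget by the same counting argument. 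The resulting expression matches $-\sum_n \left(\bar r (q_n + 1/N) - (q_{n+1}+1/N)\right)\log \frac{\bar r(q_n+1/N)}{q_{n+1}+1/N}$ modulo boundary terms. Finally, the substitution $\log \leadsto L$ is justified by the pointwise inequality $(a-b)\log(a/b) \geq (a-b)L(a/b)$ for all $a,b > 0$, which follows from a two-case analysis: both quantities are non-negative, and truncating $L$ at $\pm 2$ only decreases its absolute value. Negating yields a valid (slightly weaker) upper bound on $\tfrac{d}{dt}\mathbb{E}[\sum_n q_n\log q_n]$, which is precisely the second form stated in the lemma.
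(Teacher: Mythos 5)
You take the same overall route as the paper — insert $\psi(\mathbf{q})=\sum_n q_n\log q_n$ into the weak formulation \eqref{eq:weak_formulation} and derive a per-jump upper bound — but your two elementary inequalities are too lossy, and the resulting error does \emph{not} close to $C\log N/\sqrt N$.

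Concretely, from $(x+h)\log(x+h)-x\log x\leq h+h\log(x+h)$ and $(x-h)\log(x-h)-x\log x\leq -h\log x$ you obtain a per-jump remainder of $+2/N$. When this is summed against the rates $N\,q_\ell\,(q_m-\mathbbm{1}\{\ell=m\}/N)$ furnished by \eqref{eq:weak_formulation}, the additive slack contributes
\[
\sum_{\ell\geq 1,\,m\geq 0} N\,q_\ell\,q_m\cdot\frac{2}{N}\;=\;2\bar r\;=\;O(1),
\]
irrespective of any dichotomy on the size of $q_\ell$, $q_m$ (the rates already sum to $\bar r$; there is nothing for a $1/\sqrt N$ threshold to gain). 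This is the wrong scale: the claimed error is $C\log N/\sqrt N$.

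The paper avoids this by pairing the increase and decrease with the \emph{same} integral bound $a\log a-b\log b=\int_b^a(1+\log x)\,\dd x$, giving
\[
(q_{\ell-1}+1/N)\log(q_{\ell-1}+1/N)-q_{\ell-1}\log q_{\ell-1}\leq \tfrac1N\bigl(1+\log(q_{\ell-1}+1/N)\bigr)
\]
and crucially
\[
(q_{\ell}-1/N)\log(q_{\ell}-1/N)-q_\ell\log q_\ell\leq -\tfrac1N\bigl(1+\log(q_\ell-1/N)\bigr),
\]
so the two $\pm 1/N$ terms cancel exactly. The only residual error then comes from replacing $\log(q_\ell-1/N)$ by $\log q_\ell$, which costs $C/(N^2 q_\ell)$ when $q_\ell\geq 2/N$; summed against the rates this gives $\sum_{q_\ell>0}1/N\leq C/\sqrt N$ by the counting argument in the paper, and the boundary case $q_\ell=1/N$ contributes $C\log N/\sqrt N$. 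Your decrease inequality $\leq -h\log x$ simply discards the $-h$ that effects this cancellation, so the estimate cannot be repaired by book-keeping alone; you need the paper's sharper decrease bound. The rearrangement you sketch for the second display (factor out $\bar r$, shift $q_n\leadsto q_n+1/N$, then truncate $\log\leadsto L$ via $(a-b)\log(a/b)\geq (a-b)L(a/b)$) is essentially the paper's path, but it also relies on the separate treatment of indices with $q_{\ell-1}=0$ or $q_{m+1}=0$ and the $\log\bar r$ insertion, which you gloss over.
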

\begin{proof}
  Inserting $\psi({\bf q}) := \sum_{n\geq 0} h(q_n)$ with $h(x) = x\log x$ for $x > 0$ and $h(0) = 0$ in \eqref{eq:weak_formulation}, we obtain
\begin{equation}\label{eq:stochastic_entropy}
\begin{aligned}
\frac{\dd}{\dd t} \mathbb{E}\left[\sum_{n\geq 0} q_n\log q_n\right] &= N\sum_{\ell \geq 1, m\geq 0} \mathbb{E}\,q_\ell\, q_m\,\left(\psi\left({\bf q} + \frac{1}{N}\left(\delta_{\ell-1} + \delta_{m+1} - \delta_\ell - \delta_m\right)\right) - \psi({\bf q})\right)\\ 
&\quad+ R_N^1,
\end{aligned}
\end{equation}
where
\[
\begin{aligned}
  R_N^1&=-\mathbb{E}\,\sum_{\ell\geq 1} q_\ell\, \left( \psi\left( {\bf q}+\frac{1}{N}(\delta_{\ell-1}+\delta_{\ell+1}-2\delta_\ell)\right)-\psi({\bf q}) \right)\\
  &=-\mathbb{E}\,\sum_{\ell\geq 1} q_\ell\, \left(h(q_{\ell-1}+1/N)-h(q_{\ell-1})+h(q_{\ell+1}+1/N)-h(q_{\ell+1})+h(q_{\ell}-2/N)-h(q_{\ell})\right).
\end{aligned}
\]
Observe that for $\eps=k/N$ for $k=\pm 1$ or $k=-2$, we have that $|\eps|\leq C\,q_n$ if  $q_n\neq 0$ (as $q_n$ are necessarily in increments of $1/N$). Hence if $|\eps|\leq q_n/2$ then
\[
|h(q_n+\eps)-h(q_n)|=\left|\eps\,\log(q_n+\eps)+q_n\,\log\frac{q_n+\eps}{q_n}\right|\leq |\eps|\,\log N+C\,q_n\,\frac{|\eps|}{q_n}\leq C\,\frac{\log N}{N},
\]
as $|\eps|/q_n\leq 1/2$ and $\log (q_n+\eps)\geq \log (1/N) = -\log N$. On the other hand for $q_n\leq 2\,|\eps|$, we simply have that
\[
|h(q_n+\eps)-h(q_n)|\leq |h(q_n+\eps)|+|h(q_n)|\leq C\,\frac{\log N}{N}.
\]
In both cases, this allows us to write that
\begin{equation}
|R_N^1|\leq C\,\frac{\log N}{N}\,\sum_{\ell\geq 1} q_\ell \leq C\,\frac{\log N}{N}.\label{boundR1N}
\end{equation}
We now further simplify \eqref{eq:stochastic_entropy} by using Taylor expansion. For fixed $l,\,m$, we denote $\eps_{\ell,m} =\frac{1}{N}\left(\delta_{\ell-1} + \delta_{m+1} - \delta_\ell - \delta_m\right)$ and define the probability mass $\widehat{{\bf q}}^{\,\ell,m} = {\bf q} + \eps_{\ell,m}$, then we observe that
\[
\begin{aligned}
  &\psi\left( {\bf q}+\frac{1}{N}(\delta_{\ell-1}+\delta_{m+1}-\delta_\ell-\delta_m)\right)-\psi({\bf q})=\sum_n (h(\widehat{q}^{\,\ell,m}_n)-h(q_n)).\\
\end{aligned}
\]
Our goal is to bound
\begin{equation}
\sum_n (h(\widehat{q}^{\,\ell,m}_n)-h(q_n))\leq \frac{1}{N}\,\log \frac{(q_{\ell-1}+1/N)\,(q_{m+1}+1/N)}{q_\ell\,q_m}+r_N,\label{goal1}
\end{equation}
for some vanishing $r_N$.

We need to study separately the generic case where $\ell\neq m$, $\ell\neq m+1$ and the two special cases $\ell=m$ and $\ell=m+1$ (or equivalently $m=\ell-1$).

\smallskip

$\bullet$ {\em Case $\ell=m+1$.} This is the trivial case as we also have that $m=\ell-1$. That implies that $\eps_{\ell,m}=0$ so that
\[
\sum_n (h(\widehat{q}^{\,\ell,m}_n)-h(q_n)) =0=\frac{1}{N}\,\log \frac{q_{m}\,q_{\ell}}{q_\ell\,q_m}= \frac{1}{N}\,\log \frac{q_{\ell-1}\,q_{m+1}}{q_\ell\,q_m},
\]
whence we can simply take $r_N=0$.

\smallskip

$\bullet$ {\em Case $\ell\neq m$, and $\ell\neq m+1$.} We have
\[
\begin{split}
  &\sum_n (h(\widehat{q}^{\,\ell,m}_n)-h(q_n)) =(q_{\ell-1}+1/N)\,\log (q_{\ell-1}+1/N)-q_{\ell-1}\,\log q_{\ell-1}+\\
  &+(q_{\ell}-1/N)\,\log (q_{\ell}-1/N)-q_\ell\,\log q_\ell+(q_{m+1}+1/N)\,\log (q_{m+1}+1/N)-q_{m+1}\,\log q_{m+1}\\
  &+(q_{m}-1/N)\,\log (q_{m}-1/N)-q_m\,\log q_m.
\end{split}\]
Therefore, we can handle separately each difference and we only have to show for example that
\[
\begin{split}
&  (q_{\ell-1}+1/N)\,\log (q_{\ell-1}+1/N)-q_{\ell-1}\,\log q_{\ell-1}+(q_{\ell}-1/N)\,\log (q_{\ell}-1/N)-q_\ell\,\log q_\ell\\
  &\qquad\leq \frac{1}{N}\,\log \frac{q_{\ell-1}+1/N}{q_\ell}+r_N.
\end{split}
\]
Note that $1+\log x$ is increasing so that for $a\geq b$
\[
a\,\log a-b\,\log b=\int_{b}^a (1+\log x)\,dx\leq (1+\log a)\,(a-b).
\]
Hence
\[
(q_{\ell-1}+1/N)\,\log (q_{\ell-1}+1/N)-q_{\ell-1}\,\log q_{\ell-1} \leq \frac{1}{N}\,(1+\log (q_{\ell-1}+1/N)),
\]
and similarly
\[
(q_{\ell}-1/N)\,\log (q_{\ell}-1/N)-q_\ell\,\log q_\ell\leq -\frac{1}{N}\,(1+\log (q_\ell-1/N)).
\]
As long as $q_\ell\geq 2/N$, we can further write
\[
(q_{\ell}-1/N)\,\log (q_{\ell}-1/N)-q_\ell\,\log q_\ell\leq -\frac{1}{N}\,(1+\log q_\ell)+\frac{C}{N^2\,q_\ell}.
\]
We do not need to consider the case where $q_\ell=0$ as we have a $q_\ell\,q_m$ factor in front of everything. Further, if $q_\ell=1/N$ then
\[
(q_{\ell}-1/N)\,\log (q_{\ell}-1/N)-q_\ell\,\log q_\ell= q_\ell\,\log N.
\]
Consequently, we may write
\[
\begin{split}
&  (q_{\ell-1}+1/N)\,\log (q_{\ell-1}+1/N)-q_{\ell-1}\,\log q_{\ell-1}+(q_{\ell}-1/N)\,\log (q_{\ell}-1/N)-q_\ell\,\log q_\ell\\
  &\qquad\leq \frac{1}{N}\,\log \frac{q_{\ell-1}+1/N}{q_\ell} + C\,q_\ell\,\log N\,\mathbbm{1}\{q_\ell = 1/N\} + \frac{C}{N^2\,q_\ell},
\end{split}
\]
leading to \[r_N=C/(N^2\,q_\ell) + C/(N^2\,q_m) + C\,q_\ell\,\log N\,\mathbbm{1}\{q_\ell = 1/N\} + C\,q_m\,\log N\,\mathbbm{1}\{q_m = 1/N\}.\]

\smallskip

$\bullet$ {\em Case $\ell=m$.} This is the second special case and we have
\[
\begin{split}
  &\sum_n (h(\widehat{q}^{\,\ell,m}_n)-h(q_n)) =(q_{\ell-1}+1/N)\,\log (q_{\ell-1}+1/N)-q_{\ell-1}\,\log q_{\ell-1}+\\
  &+(q_{\ell}-2/N)\,\log (q_{\ell}-2/N)-q_\ell\,\log q_\ell+(q_{\ell+1}+1/N)\,\log (q_{\ell+1}+1/N)-q_{\ell+1}\,\log q_{\ell+1}.\\
\end{split}
\]
This is handled in a pretty similar manner as in the previous case since we may write
\[
\begin{split}
  &(q_{\ell}-2/N)\,\log (q_{\ell}-2/N)-q_\ell\,\log q_\ell\leq -\frac{2}{N}\,(1+\log q_\ell)+\frac{C}{N^2\,q_\ell}\\
  &=-\frac{1}{N}\,(1+\log q_\ell)-\frac{1}{N}\,(1+\log q_m)+\frac{C}{N^2\,q_\ell}.
\end{split}
\]

\smallskip

Combing the three separate cases, we can make \eqref{goal1} specific to find
\begin{equation}
\begin{aligned}
\sum_n (h(\widehat{q}^{\,\ell,m}_n)-h(q_n)) &\leq \frac{1}{N}\,\log \frac{(q_{\ell-1}+1/N)\,(q_{m+1}+1/N)}{q_\ell\,q_m}+\frac{C}{N^2\,q_\ell}+\frac{C}{N^2\,q_m} \\
&\quad + C\,q_\ell\,\log N\,\mathbbm{1}\{q_\ell = 1/N\} + C\,q_m\,\log N\,\mathbbm{1}\{q_m = 1/N\}.\label{inter1}
\end{aligned}
\end{equation}
We further observe that
\[
\sum_{\ell,\;q_\ell>0} \frac{1}{N}\leq \frac{1}{\sqrt{N}}+\sum_{\ell\geq \sqrt{N},\;q_\ell>0}\frac{1}{N}\leq \frac{1}{\sqrt{N}}+\sum_{\ell\geq \sqrt{N}}q_\ell,
\]
since $q_\ell\geq 1/N$ if $q_\ell > 0$. Given that $\sum_\ell \ell\, q_\ell$ is constant this implies that
\begin{equation}
\sum_{\ell,\;q_\ell>0} \frac{1}{N}\leq \frac{1}{\sqrt{N}}+\frac{1}{\sqrt{N}}\,\sum_{\ell}\ell\,q_\ell\leq \frac{C}{\sqrt{N}}.\label{inter2}
\end{equation}
Combining \eqref{inter2} with \eqref{inter1} and the previous estimates \eqref{eq:stochastic_entropy}-\eqref{boundR1N}, we obtain that
\[
\frac{\dd}{\dd t} \mathbb{E}\left[\sum_{n\geq 0} q_n\log q_n\right]\leq \sum_{\ell\geq 1,\,m\geq 0} q_\ell\,q_m\,\log \frac{(q_{\ell-1}+1/N)\,(q_{m+1}+1/N)}{q_\ell\,q_m}+\frac{C\,\log N}{\sqrt{N}}.
\]
Of course we can trivially exclude those $\ell$ and $m$ s.t. $q_\ell=0$ or $q_m=0$. We may then decompose
  \[\begin{split}
  &  \sum_{\ell\geq 1,m\geq 0,\,q_\ell>0,\,q_m>0} q_\ell\,q_m\,\log\frac{(q_{\ell-1}+1/N)\,(q_{m+1}+1/N)}{q_\ell\,q_m}\\
  &\ =\sum_{\ell\geq 1,m\geq 0,\,q_\ell>0,\,q_m>0} q_\ell\,q_m\,\log\frac{q_{\ell-1}+1/N}{q_\ell}+\sum_{\ell\geq 1,m\geq 0,\,q_\ell>0,\,q_m>0} q_\ell\,q_m\,\log\frac{q_{m+1}+1/N}{q_m}\\
  &\ =\sum_{\ell\geq 1,\,q_\ell>0} q_\ell\,\log\frac{q_{\ell-1}+1/N}{q_\ell}+\bar r\,\sum_{m\geq 0,\,q_m>0} q_m\,\log\frac{q_{m+1}+1/N}{q_m},
\end{split}
  \]
  where we recall that $\bar r=\sum_{n\geq 1} q_n$.

We note that if $q_{\ell-1}=0$ and $q_{\ell}\geq C/N$ for example then $\log \frac{q_{\ell-1}+1/N}{q_\ell} \leq -\log C$. If $q_{\ell-1}=0$ and $q_{\ell}\leq C/N$ then we have that $\log \frac{q_{\ell-1}+1/N}{q_\ell}\geq -\log C$, and the corresponding terms are controlled by
\[
\log C\,\sum_{0<q_{\ell}\leq C/N} q_{\ell}\leq \frac{C}{\sqrt{N}}.
\]
Hence we can separate from the sums those $\ell$ s.t. $q_{\ell-1}=0$, and those $m$ s.t. $q_{m+1}=0$ to get by taking $C$ large enough
\[
\begin{split}
&  \frac{\dd}{\dd t} \mathbb{E}\left[\sum_{n\geq 0} q_n\log q_n\right]\leq \mathbb{E}\,\sum_{\ell\geq 1,q_\ell>0,q_{\ell-1}>0} q_\ell\,\log\frac{q_{\ell-1}+1/N}{q_\ell} +\frac{C\,\log N}{\sqrt{N}}\\
  &\quad+\mathbb{E}\,\bar r\,\sum_{m\geq 0,q_m>0,q_{m+1}>0} q_m\,\log\frac{q_{m+1}+1/N}{q_m}-3\,\mathbb{E}\,\sum_{\ell\geq 1,\;q_{\ell}>0,\;q_{\ell-1}=0} q_\ell\\
  &\quad-3\,\mathbb{E}\,\sum_{m\geq 0,\;q_{m}>0,\;q_{m+1}=0} q_m.
\end{split}
\]
Thus, through similar estimates as before, we can also deduce that
\[
\begin{split}
  &\frac{\dd}{\dd t} \mathbb{E}\left[\sum_{n\geq 0} q_n\log q_n\right]\leq \mathbb{E}\,\sum_{\ell\geq 1,q_\ell>0,q_{\ell-1}>0} (q_\ell+1/N)\,\log\frac{q_{\ell-1}+1/N}{q_\ell+1/N}\\
  &\quad+\mathbb{E}\,\bar r\,\sum_{m\geq 0,q_m>0,q_{m+1}>0} (q_m+1/N)\,\log\frac{q_{m+1}+1/N}{q_m+1/N}\\
  &-3\,\mathbb{E}\,\sum_{\ell\geq 1,\;q_{\ell}>0,\;q_{\ell-1}=0}  \left(q_{\ell}+\frac{1}{N}\right)-3\,\mathbb{E}\,\sum_{m\geq 0,\;q_{m}>0,\;q_{m+1}=0} \left(q_m+\frac{1}{N}\right)+\frac{C\,\log N}{\sqrt{N}}.
\end{split}
\]
 We can change the variables in the first and the third sum to $m=\ell-1$ and find
  \[\begin{split}
  & \frac{\dd}{\dd t} \mathbb{E}\left[\sum_{n\geq 0} q_n\log q_n\right]\\
  &\quad\leq -\mathbb{E}\,\sum_{m\geq 0,q_m>0,q_{m+1}>0} (\bar r\,(q_m+1/N)-(q_{m+1}+1/N))\, \log\frac{q_{m}+1/N}{q_{m+1}+1/N} \\
  &\quad -3\,\mathbb{E}\,\sum_{m\geq 0,\;q_{m}=0,\;q_{m+1}>0}  \left(q_{m+1}+\frac{1}{N}\right)-3\,\mathbb{E}\,\sum_{m\geq 0,\;q_{m}>0,\;q_{m+1}=0} \left(q_m+\frac{1}{N}\right)+\frac{C\,\log N}{\sqrt{N}}.
  \end{split}
  \]
  We further note that
  \[
  \begin{split}
    &\mathbb{E}\,\log \bar r\,\sum_{m\geq 0,q_m>0,q_{m+1}>0} (\bar r\,(q_m+1/N)-(q_{m+1}+1/N))\\
    &\quad=\mathbb{E}\,\bar r\,\log \bar r\,\sum_{m\geq 0,q_m>0,q_{m+1}>0} (q_m+1/N)-\mathbb{E}\,\log \bar r\, \sum_{m\geq 0,q_m>0,q_{m+1}>0} (q_{m+1}+1/N)\\
    &\quad= \mathbb{E}\,(\bar r\,\log \bar r)\,\left(1+\mathcal{O}(N^{-1/2})-\sum_{m\geq 0,q_m>0,q_{m+1}=0} (q_m+1/N)\right)\\
    &\qquad-\mathbb{E}\,\log \bar r\,\left(\bar r+\mathcal{O}(N^{-1/2})-\sum_{m\geq 0,q_m=0,q_{m+1}>0} (q_{m+1}+1/N)\right).
  \end{split}
  \]
Observe that $\log\bar r<0$ and $\bar r\,\log \bar r\geq -1$, further since $\sum_{n\geq 0} n\,q_n=\mu>0$ and $\sum_{n\geq 0} q_n=1$, we necessarily have that $q_0\leq 1-1/N$ or $\bar r\geq 1/N$ and $|\log \bar r|\leq \log N$. This yields that
  \[\begin{split}
  &\mathbb{E}\,\log \bar r\,\sum_{m\geq 0,q_m>0,q_{m+1}>0} (\bar r\,(q_m+1/N)-(q_{m+1}+1/N))\\
  &\qquad\leq C\,\frac{\log N}{\sqrt{N}}+\mathbb{E}\,\sum_{m\geq 0,\;q_{m}>0,\;q_{m+1}=0} \left(q_m+\frac{1}{N}\right).
  \end{split}
  \]
Hence, we deduce that
\[\begin{split}
  & \frac{\dd}{\dd t} \mathbb{E}\left[\sum_{n\geq 0} q_n\log q_n\right]\\
  &\quad\leq -\mathbb{E}\,\sum_{m\geq 0,q_m>0,q_{m+1}>0} (\bar r\,(q_m+1/N)-(q_{m+1}+1/N))\, \log\frac{\bar r(q_{m}+1/N)}{q_{m+1}+1/N} \\
  &\quad -2\,\mathbb{E}\,\sum_{m\geq 0,\;q_{m}=0,\;q_{m+1}>0}  \left(q_{m+1}+\frac{1}{N}\right)-2\,\mathbb{E}\,\sum_{m\geq 0,\;q_{m}>0,\;q_{m+1}=0} \left(q_m+\frac{1}{N}\right)+\frac{C\,\log N}{\sqrt{N}}.
  \end{split}
\]
Finally we note that if $a,\,b>0$ then
$(a-b)\,\log \frac{a}{b}\geq (a-b)\,L(a/b)$. If $a>0$ and $b=0$ then
$2\,a= (a-b)\,L(a/b)$, which allows us to conclude the proof.
\end{proof}
\subsection{Propagating exponential moments}\label{subsec:3.2}
Our next step is to obtain exponential moments on the $q_n$ in the spirit of Corollary~\ref{cor1}. The first steps consist in trying to obtain an equivalent of Proposition~\ref{prop1} to control the now random variable $\bar r(t)=\sum_{n\geq 1} q_n$.

\begin{lemma}
  For any smooth function $\phi:\;\R_+ \to\R$ and for any $k\geq 1$, we have that
  \[
  \begin{split}
    &\frac{\dd}{\dd t} \mathbb{E}\, \phi(q_k)= \\
&\quad   \mathbb{E}\,N\,\left[q_{k+1}\,\left(1-\frac{1}{N}-q_k\right)+q_{k-1}\,\left(\bar r-\frac{1}{N}\,\mathbbm{1}_{k>1}-q_k\right)\right]\,\left(\phi\left(q_k+\frac{1}{N}\right)-\phi(q_k)\right)\\
    &\quad+\mathbb{E}\,N\,q_{k}\,\left(1+\bar r-\frac{2}{N}-q_{k-1}-q_{k+1}\right)\, \left(\phi\left(q_k-\frac{1}{N}\right)-\phi(q_k)\right)\\
    &\quad+\mathbb{E}\,N\,q_{k-1}\,q_{k+1}\, \left(\phi\left(q_k+\frac{2}{N}\right)-2\,\phi\left(q_k+\frac{1}{N}\right) +\phi(q_k)\right)\\
    &\quad+\mathbb{E}\,N\,q_{k}\,\left(q_{k}-\frac{1}{N}\right)\, \left(\phi\left(q_k-\frac{2}{N}\right)-2\,\phi\left(q_k-\frac{1}{N}\right) +\phi(q_k)\right).
  \end{split}
  \]
  In the case $k=0$, we simply have that
  \begin{align*}
    \frac{\dd}{\dd t} \mathbb{E}\, \phi(q_0) &= \quad  \mathbb{E}\,N\,q_1\,\left(\bar r-\frac{1}{N}\right)\,\left(\phi\left(q_0+\frac{1}{N}\right)-\phi(q_0)\right)\\
    &\quad+ \mathbb{E}\,N\,q_0\,\left(\bar r-q_1\right)\,\left(\phi\left(q_0-\frac{1}{N}\right)-\phi(q_0)\right),
  \end{align*}
  and we recall that $\bar r$ is the random variable defined as $\bar r=\sum_{n\geq 1} q_n$.
  \label{phik}
  \end{lemma}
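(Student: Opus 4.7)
The plan is to apply the weak formulation~\eqref{eq:weak_formulation} with the one-coordinate test function $\psi({\bf q}) := \phi(q_k)$, which depends only on the $k$-th entry. For each pair $(\ell, m)$ appearing in the sum, the jump $(\ell, m) \to (\ell-1, m+1)$ produces the increment
\[
\Delta q_k = \frac{1}{N}\bigl(\mathbbm{1}\{\ell - 1 = k\} + \mathbbm{1}\{m + 1 = k\} - \mathbbm{1}\{\ell = k\} - \mathbbm{1}\{m = k\}\bigr).
\]
Since the events $\{\ell = k+1\}$ and $\{\ell = k\}$ are mutually exclusive (and similarly for $m$), $\Delta q_k$ takes only the values $\{0, \pm 1/N, \pm 2/N\}$, so $\phi(q_k + \Delta q_k) - \phi(q_k)$ takes at most five distinct values.

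The core of the proof is then a bookkeeping exercise. First I enumerate the pairs $(\ell,m)$ producing each nonzero increment: the value $+2/N$ arises only from $\ell = k+1$, $m = k-1$; the value $-2/N$ only from $\ell = m = k$; the value $+1/N$ occurs either for $\ell = k+1$ with $m \notin \{k-1, k\}$, or for $m = k-1$ with $\ell \notin \{k, k+1\}$; and $-1/N$ has the analogous disjunction. Next, summing the rate $N q_\ell(q_m - \mathbbm{1}\{\ell = m\}/N)$ over each such family and invoking $\sum_{m \geq 0} q_m = 1$ together with $\sum_{\ell \geq 1} q_\ell = \bar r$ yields coefficients expressed in terms of $q_{k-1}, q_k, q_{k+1}, \bar r$. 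The correction $-\mathbbm{1}\{\ell = m\}/N$ in the jump rate is precisely what produces the $-1/N$ and $-\mathbbm{1}_{k>1}/N$ terms of the stated formula, the indicator $\mathbbm{1}_{k>1}$ reflecting whether $\ell = k-1$ lies in the admissible range $\ell \geq 1$.

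The last step is cosmetic: writing the $\pm 2/N$ contributions as the second differences $\phi(q_k \pm 2/N) - 2\phi(q_k \pm 1/N) + \phi(q_k)$, rather than as first differences, exactly absorbs the cross terms $-2 q_{k-1} q_{k+1}$ and $-2 q_k (q_k - 1/N)$ that would otherwise appear inside the $\pm 1/N$ coefficients, giving the compact form of the lemma. The case $k = 0$ is strictly simpler: the increments $\pm 2/N$ are impossible (they would require $m = -1$), and only $\ell = 1$ with $m \geq 1$ and $\ell \geq 2$ with $m = 0$ contribute, so the two stated terms follow immediately from the same rate summation. No deep analytic difficulty is expected; the only pitfall is organizational, namely avoiding double counting of the ``double'' pairs $(k+1, k-1)$ and $(k, k)$ and correctly recovering the boundary indicator $\mathbbm{1}_{k > 1}$.
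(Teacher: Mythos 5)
Your proposal is correct and follows essentially the same route as the paper: substitute $\psi(\mathbf{q}) = \phi(q_k)$ into the weak formulation, enumerate the $(\ell,m)$ pairs according to the value of the increment $\Delta q_k \in \{0, \pm 1/N, \pm 2/N\}$, sum the rates using $\sum_m q_m = 1$ and $\sum_{\ell \geq 1} q_\ell = \bar r$, and rewrite the $\pm 2/N$ contributions as second differences so that the cross terms $q_{k-1}q_{k+1}$ and $q_k(q_k - 1/N)$ drop out of the first-difference coefficients. You also correctly identify the origins of the $\mathbbm{1}\{\ell = m\}/N$ correction and the boundary indicator $\mathbbm{1}_{k>1}$, and your treatment of $k = 0$ is right.
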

\begin{proof}
  This follows directly from Eq.~\eqref{eq:weak_formulation}. Note that for $\psi(\mathbf{q}) := \phi(q_k)$, for $\ell=k+1$ and $m\neq k-1,k$ or for $m=k-1$ and $\ell\neq k,k+1$
  \[
\left(\psi\left({\bf q} + \frac{1}{N}\left(\delta_{\ell-1} + \delta_{m+1} - \delta_\ell - \delta_m\right)\right) - \psi({\bf q})\right)=\phi\left(q_k+\frac{1}{N}\right)-\phi(q_k),
\]
while for $\ell=k$ and still $m\neq k-1,k$, or for $m=k$ and $\ell\neq k,k+1$
  \[
\left(\psi\left({\bf q} + \frac{1}{N}\left(\delta_{\ell-1} + \delta_{m+1} - \delta_\ell - \delta_m\right)\right) - \psi({\bf q})\right)=\phi\left(q_k-\frac{1}{N}\right)-\phi(q_k).
\]
Similarly for $\ell=k+1$ and $m=k-1$,
\[
\begin{split}
  &\left(\psi\left({\bf q} + \frac{1}{N}\left(\delta_{\ell-1} + \delta_{m+1} - \delta_\ell - \delta_m\right)\right) - \psi({\bf q})\right)=\phi\left(q_k+\frac{2}{N}\right)-\phi(q_k)\\
  &\qquad=2\,\left(\phi\left(q_k+\frac{1}{N}\right)-\phi(q_k)\right) +\phi\left(q_k+\frac{2}{N}\right)-2\,\phi\left(q_k+\frac{1}{N}\right)+\phi(q_k),
  \end{split}
  \]
  and for $\ell=k=m$,
  \[
\begin{split}
  &\left(\psi\left({\bf q} + \frac{1}{N}\left(\delta_{\ell-1} + \delta_{m+1} - \delta_\ell - \delta_m\right)\right) - \psi({\bf q})\right)=\phi\left(q_k-\frac{2}{N}\right)-\phi(q_k)\\
  &\qquad=2\,\left(\phi\left(q_k-\frac{1}{N}\right)-\phi(q_k)\right) +\phi\left(q_k-\frac{2}{N}\right)-2\,\phi\left(q_k-\frac{1}{N}\right)+\phi(q_k).
  \end{split}
  \]
The contribution is $0$ in all other cases, namely $\ell=k+1$ and $m=k$ or $\ell=k$ and $m=k-1$.

Hence
\[
\begin{split}
  &\mathbb{E}\,\sum_{\ell\geq 1,m\geq 0} N\,q_\ell\,\left(q_m-\frac{1}{N}\,\mathbbm{1}_{\ell=m}\right)\,\left(\psi\left({\bf q} + \frac{1}{N}\left(\delta_{\ell-1} + \delta_{m+1} - \delta_\ell - \delta_m\right)\right) - \psi({\bf q})\right)\\
  &\ =N\,\mathbb{E}\,\left(\phi\left(q_k+\frac{1}{N}\right)-\phi(q_k)\right)\,q_{k+1}\,\sum_{m\geq 0,\,m\neq k-1,\,k} \left(q_m-\frac{1}{N}\,\mathbbm{1}_{m = k+1}\right)\\
  &\quad +N\,\mathbb{E}\, \left(\phi\left(q_k+\frac{1}{N}\right)-\phi(q_k)\right)\,\sum_{\ell\geq 1,\,\ell\neq k,\,k+1} \left(q_\ell-\frac{1}{N}\,\mathbbm{1}_{\ell=k-1}\right)\,q_\ell\\
  &\quad +N\,\mathbb{E}\, \left(\phi\left(q_k-\frac{1}{N}\right)-\phi(q_k)\right)\\
  &\qquad\qquad \left(q_k\,\sum_{m\geq 0,\,m\neq k-1,\,k} \left(q_m-\frac{1}{N}\,\mathbbm{1}_{m=k}\right)+\sum_{\ell\geq 1,\,\ell\neq k,\,k+1} \left(q_k-\frac{1}{N}\,\mathbbm{1}_{\ell=k}\right)\,q_\ell\right)\\
  &\quad +N\,\mathbb{E}\,q_{k+1}\,q_{k-1}\, \left(\phi\left(q_k+\frac{2}{N}\right)-2\,\phi\left(q_k+\frac{1}{N}\right) +\phi(q_k) + 2\,\left(\phi\left(q_k+\frac{1}{N}\right)-\phi(q_k)\right) \right)\\
  &\quad +N\,\mathbb{E}\,q_{k}\,\left(q_{k}-\frac{1}{N}\right)\, \left(\phi\left(q_k-\frac{2}{N}\right)-2\,\phi\left(q_k-\frac{1}{N}\right) +\phi(q_k) + 2\,\left(\phi\left(q_k-\frac{1}{N}\right)-\phi(q_k)\right)\right),
\end{split}
\]
which gives the first result.

The case $k=0$ follows similar calculations except that we cannot have $\ell=k$ or $m=k-1$, leading to
\[
\begin{split}
  &\mathbb{E}\,\sum_{\ell\geq 1,m\geq 0} N\,q_\ell\,\left(q_m-\frac{1}{N}\,\mathbbm{1}_{\ell=m}\right)\,\left(\psi\left({\bf q} + \frac{1}{N}\left(\delta_{\ell-1} + \delta_{m+1} - \delta_\ell - \delta_m\right)\right) - \psi({\bf q})\right)\\
  &\ =N\,\mathbb{E}\,q_1\,\left(\phi\left(q_0+\frac{1}{N}\right) -\phi(q_0)\right)\,\sum_{m\geq 1} \left(q_m-\frac{1}{N}\,\mathbbm{1}_{m=1}\right)\\
  &\quad+N\,\mathbb{E}\,q_0\,\left(\phi\left(q_0-\frac{1}{N}\right) -\phi(q_0)\right)\,\sum_{\ell\geq 2} q_\ell,
  \end{split}
\]
which concludes the proof.
\end{proof}
This allows us to deduce the following
\begin{lemma}
  There exists a constant $\lambda>0$ and a constant $r_0<1$ such that
  \[
\sup_{t\geq 1}\,\mathbb{P}(\bar r(t)\geq r_0)\leq \exp\left(-\frac{N}{\lambda}\right).
  \]
  \label{1/Nr}
\end{lemma}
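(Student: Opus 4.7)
The plan is to establish exponential-in-$N$ concentration via a Chernoff bound on an appropriate exponential moment. For $\alpha>0$, set $M_\alpha(t) := \mathbb{E}\bigl[\expo^{-\alpha N q_0(t)}\bigr]$, so that Markov's inequality gives
\[
\mathbb{P}(\bar r(t) \geq r_0) = \mathbb{P}(q_0(t) \leq 1-r_0) \leq \expo^{\alpha N(1-r_0)}\,M_\alpha(t);
\]
it will then suffice to produce constants $\alpha,\lambda>0$ and $r_0<1$ with $M_\alpha(t) \leq \expo^{-\alpha N(1-r_0)-N/\lambda}$ uniformly for $t\geq 1$. Applying Lemma~\ref{phik} with $k=0$ to $\phi_\alpha(x) = \expo^{-\alpha N x}$, and using $\phi_\alpha(q_0 \pm 1/N) = \expo^{\mp\alpha}\,\phi_\alpha(q_0)$, gives the compact identity
\[
\frac{\dd}{\dd t}M_\alpha = N\,\mathbb{E}\!\left[\phi_\alpha(q_0)\Bigl((\expo^{-\alpha}-1)\,q_1(\bar r-\tfrac1N) + (\expo^{\alpha}-1)\,q_0(\bar r-q_1)\Bigr)\right].
\]

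The next step is to obtain, with overwhelming probability, a lower bound $q_0(t) \geq c_\mu > 0$ by replaying the three-step argument of Proposition~\ref{prop1} at the stochastic level. The pigeonhole step (producing some $n_0 \leq N_0(\mu)$ with $q_{n_0} \geq 1/C_\mu$) is purely algebraic and applies pointwise to any probability vector; the iterative step relies only on the elementary estimate \eqref{eq:Basic}, which stays valid on events where the random entropy dissipation of Lemma~\ref{agentsentropy} is bounded; and the differential inequality $(\dd/\dd t)(1-\bar r) \geq -(1-\bar r)$ has a stochastic counterpart obtained from Lemma~\ref{phik} with $\phi(x)=x$ up to a martingale contribution of compensator $\mathcal{O}(1/\sqrt{N})$. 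Integrating the stochastic entropy dissipation over $[t-1,t]$ and using Markov furnishes some $\tau \in [t-1,t]$ on which the dissipation is $\mathcal{O}(1)$, and the first two steps then produce the desired $q_0(\tau)\geq c_\mu$ on a set of probability at least $1 - \mathcal{O}(\log N/\sqrt{N})$; propagating from $\tau$ to $t$ via the third step gives $q_0(t) \geq c_\mu/\expo^{2}$ on an event $\mathcal{G}_t$ of the same probability order.

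The final step upgrades this polynomial-in-$N$ tail into the announced exponential-in-$N$ tail by feeding the bound $q_0 \geq c_\mu$ into the expression for the generator. Taylor-expanding $\expo^{\pm\alpha}-1$ to second order reveals
\[
B_\alpha(\mathbf q) := N\Bigl[(\expo^{-\alpha}-1)q_1(\bar r-\tfrac1N)+(\expo^\alpha-1)q_0(\bar r-q_1)\Bigr] = \alpha N\bigl[\bar r(q_0-q_1) - q_0 q_1 + q_1/N\bigr] + \mathcal{O}(\alpha^2 N),
\]
whose leading-order part is exactly the negative of the mean-field drift of $q_0$ and hence vanishes at the ODE equilibrium. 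On $\mathcal{G}_t$, the $\alpha N c_\mu$ favorable drift dominates the $\mathcal{O}(\alpha^2 N)$ diffusion once $\alpha$ is chosen small enough; on $\mathcal{G}_t^c$, the crude bound $B_\alpha \leq C\alpha N$ combined with $\mathbb{P}(\mathcal{G}_t^c) = \mathcal{O}(\log N/\sqrt N)$ contributes a negligible term. Gronwall's inequality then yields $M_\alpha(t) \leq \expo^{-\alpha c_\mu N/2}$ for all $t \geq 1$, and the Markov step from the setup concludes with $r_0 \in (1 - c_\mu/2, 1)$ and $\lambda = 4/(\alpha c_\mu)$.

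The hard part will be this final step: because the drift $B_\alpha(\mathbf q)$ depends on the random $q_1$ and $\bar r$ (not only on $q_0$), the scalar exponential moment $M_\alpha$ does not satisfy a genuinely closed ODE. The cleanest workaround is either to introduce a joint exponential moment on the vector $(q_0,q_1,\ldots,q_{N_0(\mu)})$ and close a coupled system via Lemma~\ref{phik} with a $\phi$ depending on all coordinates, or to split the expectation on $\mathcal{G}_t$ versus $\mathcal{G}_t^c$ and separately handle the two contributions. In either route, the delicate point is that the $\mathcal{O}(\alpha^2 N)$ correction from the second-order Taylor expansion must be dominated by the favorable $\alpha N c_\mu$ drift, which is what forces $\alpha$ to be small but fixed and, consequently, determines the rate $1/\lambda$ in the statement.
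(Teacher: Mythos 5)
There is a genuine gap, and it is located exactly where you yourself flag ``the hard part.'' The proposal splits $\mathbb{E}[\phi_\alpha(q_0)B_\alpha(\mathbf q)]$ into $\mathcal G_t$ and $\mathcal G_t^c$ and claims the $\mathcal G_t^c$ piece is negligible. It is not: on $\mathcal G_t^c$ you only have the crude bounds $\phi_\alpha(q_0)\leq 1$ and $B_\alpha\leq C\alpha N$, so that piece contributes $\mathcal{O}\!\bigl(\alpha N\cdot\mathbb{P}(\mathcal G_t^c)\bigr)=\mathcal{O}(\alpha\sqrt{N}\log N)$ to $\frac{\dd}{\dd t}M_\alpha$. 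The resulting Gronwall estimate has the form $\frac{\dd}{\dd t}M_\alpha\leq -c\alpha N\,M_\alpha+\mathcal{O}(\alpha\sqrt{N}\log N)$, whose long-time level is $\mathcal{O}(\log N/\sqrt{N})$; that is polynomial, not exponential, in $N$. The Markov step then yields the useless bound $\mathbb P(\bar r\geq r_0)\leq \expo^{\alpha N(1-r_0)}\cdot\mathcal{O}(\log N/\sqrt{N})$. Put differently, once you allow yourself a bad set of only polynomial probability, you cannot bootstrap to an exponential tail through a one-parameter exponential moment on $q_0$ alone; the $\mathcal G_t^c$ contribution always caps $M_\alpha$ at that polynomial level. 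Your intermediate step (probability $1-\mathcal{O}(\log N/\sqrt N)$ for $\mathcal G_t$) is also not justified as written: Lemma~\ref{agentsentropy} controls the \emph{expected} entropy dissipation, and turning that into a pointwise control on the random dissipation ``on events'' is an additional concentration step that you gesture at but do not carry out.

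The paper avoids the polynomial intermediate entirely. It conditions on the finite (deterministic) pigeonhole partition $\{\omega_{k_0}\}_{k_0\leq N_0}$, on each of which $q_{k_0}(t_0)\geq 1/C_\mu$ holds pointwise, not merely with high probability. It then runs a cascade: using Lemma~\ref{phik} with the time-shifted weight $\phi_N(q)=\exp\bigl(N(1/(1+q)-1/(1+C_\mu))\bigr)$, it first closes an exponential-moment bound for $q_{k_0}$, then feeds that bound into the drift of $q_{k_0-1}$ (the favorable term $q_{k_0}(1-1/N-q_{k_0-1})/C$ in Lemma~\ref{phik} is exactly what makes this work), obtaining an exponential-moment bound for $q_{k_0-1}$, and iterates down to $q_0$ over a short time window. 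Each propagation step keeps exponential-in-$N$ precision, and the union over the $N_0+1=\mathcal O(1)$ events $\omega_{k_0}$ costs only a constant. This is the structural ingredient missing from your proposal: the lower bound on a higher mode must be \emph{deterministic} on each conditioning event, and the exponential moment must be transported mode-by-mode, rather than trying to run a Chernoff bound directly on $q_0$ from a polynomially-probable good event.
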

\begin{proof}
  We follow the main steps of the proof of Proposition~\ref{prop1}. First of all, we have $N_0$ and $C_\mu$ s.t. for any $t_0$, there exists a random $k_0\leq N_0$ with $q_{k_0}\geq 1/C_\mu$.

  This lets us define $N_0+1$ subsets of events $\omega_{k_0}$, $k_0=0,\ldots, N_0$, for which $q_{k_0}(t_0)\geq C_\mu$. We also note that the $\omega_{k_0}$ are defined by the state of the system at $t_0$. By the Markov property of the dynamics we can hence apply Lemma~\ref{phik} to the conditional expectation on each $\omega_{k_0}$. For simplicity we take $t_0=0$ here until the end of the proof.

We define $\phi_N(q)=\exp(N\,(1/(1+q)-1/(1+C_\mu)))$.
We first choose $k=k_0$ and apply Lemma~\ref{phik} to $\phi(t,q_k)=\phi_N(q_{k_0}+\tau(t-t_0))=\phi_N(q_{k_0}+\tau\,t)$. We observe that $\phi$ is decreasing in $q$ with
\[
\partial_q \phi(t,q_k)=-\frac{N}{(1+q_k+\tau\,t)^2}\,\exp(N\,(1/(1+q_k+\tau\,t)-1/(1+C_\mu))).
\]
This implies that $\phi(t,q_k+1/N)-\phi(t,q_k)\leq 0$ while
\[
\phi(t,q_k-1/N)-\phi(t,q_k)\leq \frac{1}{N}\,|\partial_q \phi(t,q_k-1/N)|\leq -\frac{C}{N}\,\partial_q \phi(t,q_k),
\]
for some constant $C$.

On the other hand $\phi$ is convex in $q$ with
\[
\begin{split}
  \partial_q^2 \phi(t,q_k)&=\left(\frac{N^2}{(1+q_k+\tau\,t)^4}+\frac{2N}{(1+q_k+\tau\,t)^3}\right)\,\exp(N\,(1/(1+q_k+\tau\,t)-1/(1+C_\mu)))\\
  &\leq C\,N\,|\partial_q \phi(t,q_k)|,
\end{split}
\]
again for some constant $C$. Therefore we also obtain that
\[
\begin{split}
&\phi(t,q_k+2/N)-2\,\phi(t,q_k+1/N)+\phi(t,q_k)\leq \frac{C}{N}\,\left|\partial_q \phi(t,q_k)\right|,\\
&\phi(t,q_k-2/N)-2\,\phi(t,q_k-1/N)+\phi(t,q_k)\leq \frac{C}{N}\,\left|\partial_q \phi(t,q_k)\right|.
\end{split}
\]
Lemma~\ref{phik} then implies that for some $C>0$ and for any $t\geq t_0=0$,
  \[
\mathbb{E}\left(\phi(t,q_{k_0})\,|\;\omega_{k_0}\right)\leq \phi(t_0,C_\mu)+\mathbb{E}\int_{t_0}^t \partial_q \phi(s,q_{k_0})\,(\tau-C)\,ds,
\]
since $\partial_t \phi(t,q)=\tau\,\partial_q \phi(t,q)$. Choosing $\tau\geq C$ and as $\partial_q \phi(t,q) \leq 0$ yields that
  \[
  \mathbb{E}\left(\phi(t,q_{k_0})\,|\;\omega_{k_0}\right)\leq 1.
  \]
  For any $t\leq t_0+t_1$ with $\tau t_1\leq C_\mu/2$, we hence have a constant $C_0$ s.t.
  \begin{equation}
\sup_{t_0\leq t\leq t_0+t_1}\mathbb{E} \exp\left(\frac{N}{C_0}\,\left(\frac{1}{1+q_{k_0}}-\frac{1}{1+C_0}\right)\right)\leq C_0.\label{expk0}
  \end{equation}
  We now turn to $k=k_0-1$ and choose $\phi_N(q)=\exp(N\,(1/(1+q)-1)/\kappa)$ with hence $\phi_N(0)=1$. We now define $\phi_1(t,q)=\phi_N(q-\tau\,(t-t_0))=\phi_N(q-\tau\,t)$ (observe that it is $q-\tau\,t$ instead $q+\tau\,t$ as for $k=k_0$).

As long as $\tau\,(t-t_0)\leq 1/2$,  we have similar properties on $\phi_1$ as before. But need to be more precise on the dissipation term with
  \[
\phi_1(t,q_k+1/N)-\phi_1(t,q_k)\leq \frac{1}{C\,N}\,\partial_q \phi_1(t,q_k).
\]
Furthermore we note that the presence of $\kappa$ leads to an improved estimate on the second derivatives with
\[
\partial_q^2 \phi_1(t,q)\leq N\,(C/\kappa+C/N)\,|\partial_q \phi_1(t,q)|.
\]
  Lemma~\ref{phik} now leads to
  \[\begin{split}
  &\mathbb{E}\left(\phi_1(t,q_{k_0-1})\,|\;\omega_{k_0}\right)\leq 1+\mathbb{E}\int_{t_0}^t \partial_q \phi_1(s,q_{k_0-1})\\
  &\qquad\left(\frac{q_{k_0}}{C}\,\left(1-\frac{1}{N}-q_{k_0-1}\right)-\tau -C\,q_{k_0-1}-C/\kappa-C/N\right)\,ds.
\end{split}
\]
First of all, provided $\kappa$ is large enough, we have that
\[
|\partial_q \phi_1(s,q_{k_0-1})|\leq |\partial_q \phi_1(s,0)|\leq C\,N\,\expo^{N/\kappa}\leq \exp\left(\frac{N}{C_0}\,\left(\frac{1}{1+C_0/2}-\frac{1}{1+C_0}\right)\right).
\]
Hence if $q_{k_0}\leq C_0/2$, we immediately have that
\[
|\partial_q \phi_1(s,q_{k_0-1})|\leq |\partial_q \phi_1(s,0)|\leq C\,N\,\expo^{N/\kappa}\leq \exp\left(\frac{N}{C_0}\,\left(\frac{1}{1+q_{k_0}}-\frac{1}{1+C_0}\right)\right).
\]
If $q_{k_0}\geq C_0/2$, by choosing $\kappa$ large enough and $\tau$ small enough, we note that
\[
\frac{q_{k_0}}{C}\,\left(1-\frac{1}{N}-q_{k_0-1}\right)-\tau -C\,q_{k_0-1}-C/\kappa-C/N\geq 0,
\]
provided that $q_{k_0-1}\leq q_{k_0}/\tilde{C}$ with $\tilde{C}$ being large enough.

On the other hand, if $q_{k_0-1}\geq q_{k_0}/\tilde{C}$ and for $\tau\,(t-t_0)\leq 1/2$ then
\[
\begin{split}
  |\partial_q \phi_1(t,q_{k_0-1})|\leq  |\partial_q \phi_1(t,q_{k_0}/\tilde{C})|&\leq C\,N\,\exp\left(C\,\frac{N}{\kappa}\,\left(\frac{1}{C/2+q_{k_0}}-1/C\right)\right)\\
  &\leq \exp\left(\frac{N}{C_0}\,\left(\frac{1}{1+q_{k_0}}-\frac{1}{C_0+1}\right)  \right),
\end{split}
\]
provided that $C\geq 2$, $C\leq C_0+1$ and $\kappa$ was chosen large enough again.

Thus in all cases, we may write that
\[
|\partial_q \phi_1(t,q_{k_0-1})|\leq \exp\left(\frac{N}{C_0}\,\left(\frac{1}{1+q_{k_0}}-\frac{1}{C_0+1}\right)  \right).
\]
Therefore,
  \[\begin{split}
  \mathbb{E}\left(\phi_1(t,q_{k_0-1})\,|\;\omega_{k_0}\right)&\leq 1+\mathbb{E}\int_{t_0}^t \exp\left(\frac{N}{C_0}\,\left(\frac{1}{1+q_{k_0}}-\frac{1}{C_0+1}\right)  \right)\,ds\\
  &\leq 1+C_0\,(t-t_0).
\end{split}
\]
For any $\delta>0$, this implies that there exists a constant $C_1$ s.t. for all $t_0+\delta/N_0\leq t\leq t_0+t_1$
\begin{equation}
\sup_{t_0+\delta/N_0\leq t\leq t_0+t_1}\,\mathbb{E}\exp\left(\frac{N}{C_1}\,\left(\frac{1}{1+q_{k_0-1}} -\frac{1}{C_1+1}\right)  \right)\leq C_1.\label{expk-1}
  \end{equation}
By using \eqref{expk-1}, we may now repeat the argument starting from $k_0-1$ and $t_0+\delta/N_0$ instead of $t_0$, until we reach $k=0$ and $t_0+\delta$. Since we can choose any $t_0\geq 0$ and since $q_0=1-\bar r$, this yields the advertised result by Markov's inequality.
\end{proof}
The next step is to extend the result of Corollary~\ref{cor1} under the random setting. This uses in a very strong manner the large deviation bound obtained in Lemma~\ref{1/Nr}.
\begin{lemma}
There exists a constant $K>1$ independent of $N$ such that
  \[
\sup_{t>0} \mathbb{E}\sum_{n\geq 0} K^{n} q_n\leq 2.
\]\label{Kn}
\end{lemma}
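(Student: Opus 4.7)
My plan is to adapt the proof of Corollary~\ref{cor1} from the deterministic to the stochastic setting, replacing the uniform pointwise bound on $\bar r$ from Proposition~\ref{prop1} by the large deviation estimate of Lemma~\ref{1/Nr}. First I apply the weak formulation~\eqref{eq:weak_formulation} to the test function $\psi({\bf q}) := \sum_{n\geq 0} K^n q_n$. Since
$\psi\bigl({\bf q} + \tfrac{1}{N}(\delta_{\ell-1} + \delta_{m+1} - \delta_\ell - \delta_m)\bigr) - \psi({\bf q}) = \tfrac{K-1}{N}\,(K^m - K^{\ell-1})$,
a short computation structurally identical to the derivation behind Corollary~\ref{cor1} yields
\[
\frac{\dd}{\dd t} M_N = \frac{K-1}{K}\,\mathbb{E}\bigl[(K\bar r - 1)\,\mathcal{M}_N\bigr] + \frac{K-1}{K}\,\mathbb{E}[q_0] - \frac{(K-1)^2}{NK}\,\mathbb{E}[\mathcal{M}_N - q_0],
\]
where $\mathcal{M}_N(t) := \sum_n K^n q_n(t)$ and $M_N(t) := \mathbb{E}[\mathcal{M}_N(t)]$. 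The only difference from the deterministic ODE for the moment is that $\bar r$ is now random.

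To handle this randomness, I split the main term $\mathbb{E}[(K\bar r - 1)\mathcal{M}_N]$ according to the events $\{\bar r \leq r_0\}$ and $\{\bar r > r_0\}$, with $r_0 \in (0,1)$ as furnished by Lemma~\ref{1/Nr}, and pick $K > 1$ close enough to $1$ that $Kr_0 < 1$. On the good event, $(K\bar r - 1)\mathcal{M}_N \leq -(1 - Kr_0)\mathcal{M}_N$ is dissipative. On the bad event, I combine the crude deterministic bound $\mathcal{M}_N \leq K^{N\mu}$---which holds because $S_i \leq N\mu$ by mass conservation---with $\mathbb{P}(\bar r > r_0) \leq e^{-N/\lambda}$ from Lemma~\ref{1/Nr}, giving
\[
\mathbb{E}\bigl[\mathcal{M}_N\,\mathbbm{1}_{\bar r > r_0}\bigr] \leq K^{N\mu}\,e^{-N/\lambda} \qquad (t \geq 1).
\]
If $K$ is shrunk further so that $\mu \log K < \tfrac{1}{2\lambda}$, this bad-event contribution is uniformly bounded by $e^{-N/(2\lambda)}$, in fact vanishing as $N \to \infty$.

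Combining the two estimates, for $t \geq 1$ I obtain a differential inequality of the form $\tfrac{\dd}{\dd t} M_N(t) \leq -c_1\, M_N(t) + c_2$ with $c_1 > 0$ and $c_2$ bounded independently of $N$ and $t$. Gronwall's inequality then gives $M_N(t) \leq M_N(1)\,e^{-c_1(t-1)} + c_2/c_1$ for $t \geq 1$, while on $t \in [0,1]$ a crude Gronwall using $\bar r \leq 1$ controls $M_N(1)$ by $M_N(0)$ times a constant depending only on $K$. This establishes $\sup_{t > 0} M_N(t) \leq C$ for some constant $C$ depending on $K$ and the initial data. To reduce the bound from $C$ to the value $2$ stated in the lemma, I replace $K$ by $\tilde K := K^\alpha$ for a small $\alpha \in (0,1)$: by Jensen's inequality applied first to the concave function $x \mapsto x^\alpha$ against the probability distribution $\{q_n\}_{n\geq 0}$ and then against the expectation,
\[
\mathbb{E}\Bigl[\sum_n \tilde K^n\,q_n\Bigr] \leq \mathbb{E}\Bigl[\Bigl(\sum_n K^n\,q_n\Bigr)^{\!\alpha}\,\Bigr] \leq M_N(t)^\alpha \leq C^\alpha,
\]
and choosing $\alpha$ small enough ensures $C^\alpha \leq 2$.

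I expect the main obstacle to be the balancing act imposed on $K$ in the splitting step. The crude bound $\mathcal{M}_N \leq K^{N\mu}$ on the bad event is quite wasteful, and absorbing it into the exponentially small factor $e^{-N/\lambda}$ from Lemma~\ref{1/Nr} confines the admissible $K$ to a narrow window such as $(1, e^{1/(2\mu\lambda)})$, strictly smaller than the range $(1, 1/C)$ available in the deterministic Corollary~\ref{cor1}. A sharper concentration inequality at the level of the empirical distribution would in principle restore the full range, but the above estimate already suffices for the uniform-in-time propagation of chaos developed in the sequel.
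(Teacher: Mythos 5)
Your proposal is correct and follows essentially the same route as the paper: the same linear test function in the weak formulation, the same split into the events $\{\bar r \leq r_0\}$ and $\{\bar r > r_0\}$, the same crude bound $\mathcal{M}_N \leq K^{N\mu}$ combined with the large-deviation estimate of Lemma~\ref{1/Nr}, the same constraint $\mu\log K \lesssim 1/\lambda$ to absorb the bad-event term, and Gronwall. Your Jensen argument with $x \mapsto x^\alpha$ is a concrete and clean way to carry out what the paper compresses into "lowering further $K$ \ldots by interpolation with $K=1$," so the two proofs match in substance.
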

\begin{proof}
  We use again \eqref{eq:weak_formulation} with $\psi({\bf q})=\sum_n K^n\,q_n$. Since  $\psi$ is linear in ${\bf q}$, we simply have that
  \[
  \begin{split}
   & \psi\left({\bf q} + \frac{1}{N}\left(\delta_{\ell-1} + \delta_{m+1} - \delta_\ell - \delta_m\right)\right) - \psi({\bf q})=\psi\left(\frac{1}{N}\left(\delta_{\ell-1} + \delta_{m+1} - \delta_\ell - \delta_m\right)\right)\\
   &\quad=\frac{K^{\ell-1}+K^{m+1}-K^{\ell}-K^{m}}{N}=(K-1)\,\frac{K^m-K^{\ell-1}}{N}.
  \end{split}
  \]

Hence we first obtain that
\[\frac{\dd}{\dd t}\,\mathbb{E}\sum_{n\geq 0} K^n q_n =(K-1)\,\mathbb{E}\,\left[\sum_{\ell \geq 1, m\geq 0} q_\ell\,\left(q_m-\frac{1}{N}\,\mathbbm{1}\{\ell = m\}\right) (K^m-K^{\ell-1}).\right]\]
Recalling our definition $\bar{r} =\sum_{\ell\geq 1} q_\ell=1-q_0$, this leads to computations very similar to the proof of Corollary~\ref{cor1}
\[
\begin{split}
  \frac{\dd}{\dd t}\,\mathbb{E}\sum_{n\geq 0} K^n q_n&\leq (K-1)\,\mathbb{E}\,\left[ \bar r\,\sum_{m\geq 0} q_m\,K^m\right]-\frac{K-1}{K}\,\mathbb{E}\,\sum_{\ell\geq 1} q_\ell\,K^\ell\\
  &=(1-1/K)\,\mathbb{E}\,q_0 + (K-1)\,\mathbb{E}\,\left[(\bar r-1/K)\,\sum_{\ell\geq 0}q_\ell\,K^\ell\right].
\end{split}
\]
Imposing that $1/K> r_0$ or $K<1/r_0$ and separate the case $\bar r\leq r_0$ and $\bar r\geq r_0$, together with the elementary observation that $\sum_\ell q_\ell\,K^\ell\leq K^{N\,\mu}$ lead us to
\[
\mathbb{E}\,\left[(\bar r-1/K)\,\sum_{\ell\geq 0}q_\ell\,K^\ell\right]\leq (r_0-1/K)\,\mathbb{E}\,\left[\sum_{\ell\geq 0}q_\ell\,K^\ell\right]+K^{N\,\mu}\,\mathbb{P}(\bar r\geq r_0).
\]
Applying Lemma~\ref{1/Nr} yields
\[
\mathbb{E}\,\left[(\bar r-1/K)\,\sum_{\ell\geq 0}q_\ell\,K^\ell\right]\leq (r_0-1/K)\,\mathbb{E}\,\left[\sum_{\ell\geq 0}q_\ell\,K^\ell\right]+K^{N\,\mu}\,\expo^{-N/\lambda}.
\]
Enforcing that $K^{N\,\mu}\,\expo^{-N/\lambda}\leq 1$ or $\log K\leq 1/(\lambda\,\mu)$, we obtain
\[
\begin{split}
  \frac{\dd}{\dd t}\,\mathbb{E}\sum_{n\geq 0} K^n q_n&\leq
  2 + (K-1)\,(r_0-1/K)\mathbb{E}\,\left[\sum_{\ell\geq 0}q_\ell\,K^\ell\right].
\end{split}
\]
It is then easy to conclude by Gronwall's inequality that $\mathbb{E}\,\left[\sum_{\ell\geq 0}q_\ell\,K^\ell\right]$ is uniformly bounded in time. Lowering further $K$, it is also always possible to ensure that this quantity less than $2$ by interpolation with $K=1$.
\end{proof}

\begin{remark}
We do emphasize here that Lemma~\ref{1/Nr} was critical as it allows us to replace the random variable $\bar{r} -1/K$, which would have blocked any attempt to use Gronwall-type inequalities, by the deterministic $r_0-1/K$. The exponential tail in Lemma~\ref{1/Nr} is in particular essential to compensate the effect of the term $K^{N\,\mu}$.
\end{remark}
\subsection{The precise $\log$-Sobolev inequality}\label{subsec:3.3}
After we obtained a good control on the exponential moments, we need to make the quantitative logarithmic Sobolev inequality in Theorem~\ref{thm1} as explicit as possible in terms of $K$ and $\bar r$ due to the large deviations considerations. For this purpose, we prove a pointwise version as follows:
\begin{proposition}
  For any random probability mass function $\{p_n\}_{n \geq 0}$ satisfying the following assumptions
  \[
\sum_{n\geq 0} n\,p_n=\tilde \mu,\quad \sum_{n\geq 1} p_n=\bar r<1,\quad \mathbb{E}\,\sum_{n\geq 0} K^n\,p_n\leq 2,
\]
for some $K>1$, we have that
\[
\mathbb{E}\,(1-\bar r)\,\sum_n p_n\,\log\frac{p_n}{p_n^*}\leq \frac{C}{K-1}\,\sqrt{\tilde D}\,|\log \tilde D|+\frac{C}{K-1}\,\mathbb{E}\,|\mu-\tilde\mu|\,|\log(\mathbb{E}\,|\mu-\tilde\mu|)|,
\]
in which the modified dissipation term $\tilde D$ is defined via
\[
\tilde D =\mathbb{E}\sum_{n\geq 0} (\bar r\, p_n-p_{n+1})\,L\left(\frac{\bar r\,p_n}{p_{n+1}}\right),
\]
where we recall that the function $L$ is defined by $L(x)=\log x$ if $-2\leq\log x\leq 2$, $L(x)=-2$ if $\log x\leq -2$ and $L(x)=2$ if $\log x\geq 2$.
\label{preciselogsobolev}
\end{proposition}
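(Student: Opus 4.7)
The strategy mirrors the proof of Theorem~\ref{thm1}: we split the relative entropy into a ``tail'' piece controlled by the exponential moment and a ``bulk'' piece controlled by the dissipation via a weighted Cauchy--Schwarz, then optimize a threshold. Three new features demand attention: (i) the dissipation $\tilde D$ uses the truncated logarithm $L$ in place of $\log$; (ii) the mean $\tilde\mu$ of $\{p_n\}$ need not coincide with the mean $\mu$ defining $\{p_n^*\}$, so an additional error involving $|\mu-\tilde\mu|$ must be absorbed; and (iii) all constants must be tracked carefully in $K-1$ in order to yield the advertised $1/(K-1)$ prefactor.

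The first step is a randomized analog of Lemma~\ref{lem1} bounding $\mathbb{E}\,(1-\bar r)\,|\bar r - r^*|$. Starting from the telescoping identity $p_n - \bar r^n p_0 = \sum_{k=0}^{n-1}\bar r^{n-k-1}(p_{k+1}-\bar r\,p_k)$, a short computation yields $(1-\bar r)\bigl(\tilde\mu - \tfrac{\bar r}{1-\bar r}\bigr) = (1-\bar r)\sum_{n\geq 1}n(p_n - \bar r^n p_0)$, so that, as in Lemma~\ref{lem1}, the left-hand side equals $(1+\tilde\mu)(r^*_{\tilde\mu} - \bar r)$ with $r^*_{\tilde\mu} = \tilde\mu/(1+\tilde\mu)$. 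One then applies Cauchy--Schwarz with weights $\sqrt{\bar r p_k + p_{k+1}}$. The pointwise inequality $(a-b)L(a/b)\geq c(a-b)^2/(a+b)$ remains valid in every regime of $L$: in the capped regime $|L|=2$, and $|a-b|/(a+b)\leq 1$ gives $(a-b)L(a/b) = 2|a-b|\geq 2(a-b)^2/(a+b)$. The weighted moment $\mathbb{E}\sum_k(k+1)^2(p_{k+1}+\bar r p_k)$ is bounded by the exponential-moment hypothesis by a quantity of order $1/(K-1)^2$, since $\sup_k(k+1)^2/K^k$ behaves like $1/(\log K)^2$, so its square root contributes a factor of $1/(K-1)$ to the Cauchy--Schwarz step. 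Using $|r^*-r^*_{\tilde\mu}|\leq|\mu-\tilde\mu|$ and Jensen's inequality to move the expectation inside the square root, one obtains
\[
\mathbb{E}\,(1-\bar r)\,|\bar r - r^*| \;\leq\; \frac{C\,\sqrt{\tilde D}}{K-1}+C\,\mathbb{E}|\mu-\tilde\mu|.
\]

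We then decompose the entropy exactly as in Theorem~\ref{thm1}: pick $M>1$ and split $\HH := \sum_n\bigl(p_n\log(p_n/p_n^*)+p_n^*-p_n\bigr) = \HH_1+\HH_2$ according to whether $p_n\geq M p_n^*$ or not. For $\HH_1$, the H\"older chain from the proof of Theorem~\ref{thm1}, combined with the hypothesis $\mathbb{E}\sum K^n p_n\leq 2$, gives $\mathbb{E}\,(1-\bar r)\HH_1 \leq C/M^\theta$ for any $\theta>0$ with $(r^*)^{2\theta}>1/K$; the admissible $\theta$ is of order $\log K\sim K-1$. For $\HH_2$, the elementary bound $(a-b)\log(a/b)+b-a\leq C|\log M|(a-b)^2/(a+b)$ (valid for $a\leq Mb$), together with the identity $p_n-p_n^* = \sum_k\bar r^{n-k-1}(p_{k+1}-\bar r p_k)+(\bar r^n p_0-(r^*)^n p_0^*)$ and Step~1, delivers
\[
\mathbb{E}\,(1-\bar r)\,\HH_2 \;\leq\; \frac{C\,|\log M|}{K-1}\bigl(\sqrt{\tilde D}+\mathbb{E}|\mu-\tilde\mu|\bigr).
\]
Choosing $M$ so that $M^\theta\asymp 1/(\sqrt{\tilde D}+\mathbb{E}|\mu-\tilde\mu|)$ yields the claimed bound, where the two $|\log(\cdot)|$ factors appear through $\log M\sim\log(1/\varepsilon)/\theta$.

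The main technical obstacle is the bookkeeping of the $K-1$ dependence: the Cauchy--Schwarz weight in Step~1 and the coupling of the H\"older exponent $\theta\sim K-1$ with the threshold $M$ in Step~2 must be executed so that only a single $1/(K-1)$ survives the optimization. The $(1-\bar r)$ prefactor on the left is essential, since it equals $p_0$ and absorbs the potentially degenerate factors $1/p_0$ and $1/(1-r^*)$ appearing in several intermediate estimates. Finally, one must verify that replacing $\log$ by the truncation $L$ in $\tilde D$ does not degrade the Cauchy--Schwarz step, which is precisely the purpose of the pointwise case analysis for $L$ recorded above.
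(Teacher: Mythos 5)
Your proposal follows the same architecture as the paper's proof: split $\HH = \HH_1 + \HH_2$ by the threshold $p_n \geq M p_n^*$, bound $\HH_1$ via a H\"older/Cauchy--Schwarz chain that leverages the exponential-moment hypothesis with exponent $\theta$ satisfying $(r^*)^{2\theta} > K^{-2}$, bound $\HH_2$ by combining the elementary inequality $(a-b)\log(a/b)+b-a \leq C|\log M|\,(a-b)^2/(a+b)$ with a random analogue of Lemma~\ref{lem1}, then optimize over $M$. Your verification that the truncated logarithm $L$ still satisfies the pointwise bound $(a-b)\,L(a/b) \geq c\,(a-b)^2/(a+b)$ in the capped regime, and your handling of $\tilde\mu\neq\mu$ via $|r^*-r^*_{\tilde\mu}|\leq |\mu-\tilde\mu|$, both match the paper's argument.

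There is, however, one issue that you flag but do not close, and it is worth being concrete about. In Step~1 you correctly observe that the $(k+1)^2$-weighted Cauchy--Schwarz in the random analogue of Lemma~\ref{lem1} relies on $\mathbb{E}\sum_k(k+1)^2(p_{k+1}+\bar r p_k)$, which under the hypothesis $\mathbb{E}\sum_n K^n p_n\leq 2$ is only $O\bigl(1/(K-1)^2\bigr)$; after the square root this already inserts a factor $1/(K-1)$ into the bound on $\mathbb{E}\,(1-\bar r)\,|\bar r - r^*|$. When this is fed into your Step~2 together with $|\log M| \sim \theta^{-1}\log(1/\varepsilon) \sim (K-1)^{-1}\log(1/\varepsilon)$ from $\theta\sim K-1$, the $\HH_2$ estimate acquires a $1/(K-1)^2$ prefactor, not the $1/(K-1)$ stated in the proposition. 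You acknowledge this (``must be executed so that only a single $1/(K-1)$ survives'') but offer no mechanism for cancelling the extra power. The paper's own proof of Proposition~\ref{preciselogsobolev} passes silently over the $K$-dependence of the weighted-moment constant in Lemma~\ref{lem1}'s extension --- it just writes $\mathbb{E}|p_0-p_0^*|\leq C\,\mathbb{E}|\mu-\tilde\mu|+C\sqrt{\tilde D}$ --- and produces the $1/(K-1)$ solely from the $|\log M|$ step, so your more careful accounting surfaces a genuine imprecision: either the Step~1 estimate can be arranged to avoid the $1/(K-1)$ (which I do not see how to do under the stated hypotheses), or the final prefactor in the proposition is effectively $1/(K-1)^2$. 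You should either resolve this or state explicitly that the argument as written yields a weaker power of $1/(K-1)$ than claimed.
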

Note that in the proposition, we may have that $\sum n\,p_n\neq \sum n\,p_n^*$ as $p_n^*$ is defined through $r^*$ and $\mu$ while $\sum n\,p_n=\tilde \mu$.
\begin{proof}
As before, we write that
\begin{equation}\label{eq:decompose_H_random}
\HH = \mathbb{E}\,(1-\bar r)\,\sum_{n \geq 0} \left(p_n\,\log \frac{p_n}{p^*_n} + p^*_n - p_n\right) := \HH_1 + \HH_2,
\end{equation}
with
\begin{equation}\label{eq:H1_random}
\HH_1 := \mathbb{E}\,(1-\bar r)\,\sum_{n \colon p_n \geq Mp^*_n} \left(p_n\,\log \frac{p_n}{p^*_n} + p^*_n - p_n\right)
\end{equation}
and
\begin{equation}\label{eq:H2_random}
\HH_2 := \mathbb{E}\,(1-\bar r)\,\sum_{n \colon p_n < Mp^*_n} \left(p_n\,\log \frac{p_n}{p^*_n} + p^*_n - p_n\right)
\end{equation}
for some (potentially) large $M > 0$ to be determined. Let $\theta > 0$ be a small enough exponent so that $(r^*)^{2\theta} > 1/K^2$. The estimate of $\HH_1$ can be carried out as follows for constants $C_1$ and $C_2$,
\begin{align*}
\HH_1 &\leq \mathbb{E}\,\sum_{n \colon p_n \geq Mp^*_n} \left(p_n\,\log \frac{p_n}{p^*_n} + p^*_n - p_n\right) \\
&\leq C_1\,\mathbb{E}\,\sum_{n \colon p_n \geq Mp^*_n} p_n(n + C_2) \\
&\leq \frac{C_1}{M^\theta}\mathbb{E}\,\sum_{n \colon p_n \geq Mp^*_n} K^{\frac{n}{2}}p_n \left(\frac{p_n}{p^*_n}\right)^\theta \frac{n+C_2}{K^{\frac{n}{2}}}\\
&\leq \frac{C_3}{M^\theta}\left(\mathbb{E}\,\sum_{n\geq 0} K^n p^{1+2\theta}_n \right)^{\frac 12}\left(\mathbb{E}\,\sum_{n\geq 0} p_n\,\frac{(n+C_2)^2}{\left(K(r^*)^{2\theta}\right)^n}\right)^{\frac 12} \leq C/M^\theta\,\left(\frac{1}{\log \left(K^2\,(r^*)^{2\theta}\right)}\right)^{\frac 12},
\end{align*}
in which $C_1, C_2, C_3 > 0$ and $C >0$ are fixed constants depending only on $r^*$ and $K$.

Next, notice that for each $n \in \mathbb N$, we have
\begin{equation}\label{eq:identity_restate}
p_n - p^*_n = \sum_{0\leq k\leq n-1} \bar{r}^{n-k-1}\left(p_{k+1} - \bar{r}p_k\right) + \left(\bar{r}^n p_0 - (r^*)^n p^*_0\right).
\end{equation}
Then for some constant $C > 0$ whose value might change from line to line, but again depending only on $\mu$
\begin{align*}
\sum_{n\geq 0} \frac{|p_n - p^*_n|^2}{p_n + p^*_n} &\leq \sum_{n\geq 0} |p_n - p^*_n| \\
&\leq \sum_{k \geq 0} |p_{k+1}-\bar{r}p_k|\sum_{n\geq k+1} \bar{r}^{n-k-1} + \sum_{n\geq 0} \left|\bar{r}^n p_0 - (r^*)^n p^*_0\right| \\
&\leq \frac{C}{1-\bar r}\,|p_0 - p^*_0| + \frac{C}{1-\bar r}\,\left(\sum_{k\geq 0} \frac{|p_{k+1}-\bar{r}p_k|^2}{p_{k+1}+\bar{r}p_k}\right)^{\frac 12}.
\end{align*}
We note that the elementary fact~\eqref{eq:elementary} also applies to the function~$L$, i.e.,
\[
\frac{(a-b)^2}{a+b}\leq (a-b)\,L\left(\frac{a}{b}\right).
\]
Indeed, if $\expo^{-2}\leq a/b\leq \expo^2$ then this is the same inequality as $L(a/b)=\log a/b$. If $a/b\geq \expo^2$ for example then $(a-b)^2/(a+b)\leq a$ while $(a-b)\,L(a/b)\geq \frac{a}{2}\cdot 2$.

We can hence also use Lemma~\ref{lem1} with $\tilde D$ as the proof of Lemma~\ref{lem1} relies on the same inequalities. Because of the possibility that $\mu\neq\tilde\mu$, we obtain
\[
\mathbb{E}\,|p_0-p_0^*|\leq C\,\mathbb{E}\,|\mu-\tilde\mu|+C\,\sqrt{\tilde D}.
\]
This implies that
\begin{align*}
\mathbb{E}\,(1-\bar r)\,\sum_{n\geq 0} \frac{|p_n - p^*_n|^2}{p_n + p^*_n}&\leq C\,\mathbb{E}\,|\mu-\tilde\mu|+C\,\sqrt{\tilde D},
\end{align*}
in which the constant $C$ is independent of $\bar r$.

Here we emphasize that the estimates presented here are somewhat worse than its counterpart in the deterministic setting (shown in the proof of Theorem \ref{thm1}), as our control on $\bar{r}$ (which is now a random variable) might be poor a priori and $\bar{r}$ might be very close to $1$ for some realizations of the stochastic $N$-agent system, which means that whenever we have a series like $\sum \bar{r}^{\,n}$, we need to bear with an extra $1/(1-\bar r)$.

To control $\HH_2$, we use again the observation that for any $a,b \in \mathbb{R}_+$, if $a \leq Mb$ for some $M > 0$, then $(a-b)\log(a/b) + b - a \leq C\,\left|\log M\right|\,\frac{|a-b|^2}{a+b}$ for some constant $C > 0$ (independent of $a,b$ and $M$). Thus, we deduce that
\begin{equation}\label{eq:bound_H2_random}
\HH_2 \leq C\,\left|\log M\right|\,\mathbb{E}\,(1-\bar r)\,\sum_{n\geq 0} \frac{|p_n - p^*_n|^2}{p_n + p^*_n} \leq C\,(\sqrt{\tilde D}+\mathbb{E}\,|\mu-\tilde\mu|)\,\left|\log M\right|.
\end{equation}
Putting the estimates on $\HH_1$ and $\HH_2$ together lead us to
\begin{equation}\label{eq:bound_H_random}
\HH = \HH_1 + \HH_2 \leq \frac{C}{M^\theta}\,\left(\frac{1}{\log \left(K^2\,(r^*)^{2\theta}\right)}\right)^{\frac 12} + C\,(\sqrt{\tilde D}+\mathbb{E}\,|\mu-\tilde\mu|)\,\left|\log M\right|,
\end{equation}
for some constant $C > 0$.

As before, we may first take $\theta = (K-1)/C$ for sufficiently large $C > 0$ such that (since $K$ cannot be too large)
\[
\log \frac{K^2}{(r^*)^{2\,\theta}} \geq \log K\geq \frac{K-1}{C}.
\]
The bound on $\HH$ follows by taking $M = \left(1/(\mathbb{E}\,|\mu-\tilde\mu|+\tilde D)\right)^{1/\theta}$ which implies that
\[
|\log M|\leq \frac{1}{\theta}\,\min(|\log\mathbb{E}\,|\mu-\tilde\mu||,\ |\log \tilde D|)\leq \frac{C}{K-1}\,\min(|\log\mathbb{E}\,|\mu-\tilde\mu||,\ |\log \tilde D|),
\]
for some $C$ depending only on $\mu$ or $r^*$.
\end{proof}
\subsection{Uniform propagation of chaos}\label{subsec:3.4}
We now assemble the previous estimates to show that the random histogram $\{q_n\}_{n \geq 0}$ still converge to the deterministic equilibrium $\{p_n^*\}_{n \geq 0}$ up to a small polynomial correction in $N$.
\begin{theorem}\label{thm3}
For all $t > 0$, we have that
\[
\mathbb{E}\,\sum_{n\geq 0} q_n(t)\,\log \frac{q_n(t)}{p_n^*}  \leq C\,\frac{\log N}{N^{1/4}}+\frac{C}{1+t}.
\]
\end{theorem}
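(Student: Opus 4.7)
The strategy is to transpose the deterministic entropy-entropy dissipation loop of Section~\ref{sec:2} to the empirical measure: combine the stochastic entropy dissipation of Lemma~\ref{agentsentropy} with the weighted log-Sobolev inequality of Proposition~\ref{preciselogsobolev}, using Lemma~\ref{1/Nr} to remove the random $(1-\bar r)$ weight and Lemma~\ref{Kn} to supply the exponential-moment hypothesis.

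\emph{Step 1: Turning the stochastic dissipation into a bound on $\HH_N$.} Set $\HH_N(t) := \mathbb{E}\sum_n q_n(t)\log(q_n(t)/p_n^*)$. The particle dynamics conserves $\sum_i S_i(t) = N\mu$ almost surely, hence $\sum_n n\,q_n(t) = \mu$ for every $t\geq 0$ and therefore $\mathbb{E}\sum_n q_n\log p_n^* = \log p_0^* + \mu\log r^*$ is time-independent. Lemma~\ref{agentsentropy} then yields
\[
\frac{\dd}{\dd t}\HH_N \leq -\tilde D_N^{\mathrm{ag}} + \frac{C\log N}{\sqrt N},
\]
where $\tilde D_N^{\mathrm{ag}}$ denotes the shifted dissipation on the right-hand side of Lemma~\ref{agentsentropy}.

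\emph{Step 2: The log-Sobolev side and the $(1-\bar r)$ weight.} I apply Proposition~\ref{preciselogsobolev} to the random probability mass function $\{q_n\}$. The hypothesis $\mathbb{E}\sum K^n q_n\leq 2$ is supplied by Lemma~\ref{Kn}, and the equality $\tilde\mu = \mu$ makes the $|\mu-\tilde\mu|$ correction vanish, yielding
\[
\mathbb{E}\,(1-\bar r)\sum_n q_n\log\frac{q_n}{p_n^*} \leq C\,\sqrt{\tilde D_N^{\mathrm{prop}}}\,|\log \tilde D_N^{\mathrm{prop}}|,
\]
with $\tilde D_N^{\mathrm{prop}} = \mathbb{E}\sum(\bar r q_n - q_{n+1})L(\bar r q_n/q_{n+1})$. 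A short comparison shows that $\tilde D_N^{\mathrm{ag}}\geq c\,\tilde D_N^{\mathrm{prop}} - C/\sqrt N$: splitting according to whether $\min(q_n,q_{n+1})$ is below or above a large multiple of $1/N$ and using the bound $\sum_{n:q_n>0}1/N \leq C/\sqrt N$ established in \eqref{inter2}, the $1/N$-shifts inside $L$ perturb each summand only by amounts that telescope to the claimed error. To strip the $(1-\bar r)$ weight I split on the event $A = \{\bar r\geq r_0\}$ of Lemma~\ref{1/Nr}. On $A^c$ the weight is at least $1-r_0>0$, so the contribution is bounded by $C\sqrt{\tilde D_N^{\mathrm{prop}}}|\log \tilde D_N^{\mathrm{prop}}|$; on $A$ one has $\mathbb{P}(A)\leq \expo^{-N/\lambda}$ together with the deterministic pointwise bound $\sum q_n\log(q_n/p_n^*) \leq C\log N$ (since $q_n\in\{0,1/N,2/N,\ldots\}$ supports at most $N$ atoms while $\sum nq_n=\mu$ keeps the $-\sum q_n\log p_n^*$ piece of order $1$). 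Thus
\[
\HH_N \leq C\sqrt{\tilde D_N^{\mathrm{ag}}}\,|\log \tilde D_N^{\mathrm{ag}}| + C\log N\cdot \expo^{-N/\lambda}.
\]

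\emph{Step 3: Closing the ODE.} Inverting the relation in Step~2 gives $\tilde D_N^{\mathrm{ag}} \geq c\,\HH_N^2/|\log \HH_N|^2$ up to a residual that is exponentially small in $N$. Plugging into the inequality of Step~1 produces the scalar differential inequality
\[
\frac{\dd}{\dd t}\HH_N \leq -c\,\frac{\HH_N^2}{|\log \HH_N|^2} + \frac{C\log N}{\sqrt N}.
\]
As long as the dissipation dominates the forcing, integrating yields the transient rate $\HH_N\lesssim 1/(1+t)$ (the same computation as at the end of Theorem~\ref{thm1}, with the log factors swept into the constant). Once $\HH_N$ drops below the equilibrium level at which the two competing terms balance, one finds $\HH_N \lesssim \log N/N^{1/4}$ and $\HH_N$ cannot re-enter the larger regime. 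Combining both regimes gives $\HH_N(t)\leq C\log N/N^{1/4} + C/(1+t)$ for all $t>0$, as claimed.

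The most delicate point is the comparison in Step~2 between the shifted dissipation $\tilde D_N^{\mathrm{ag}}$ arising naturally from the stochastic entropy identity and the bare dissipation $\tilde D_N^{\mathrm{prop}}$ required by Proposition~\ref{preciselogsobolev}: the truncated logarithm $L$ interacts nontrivially with the $1/N$ shifts, and one must verify that the residual really contributes only $O((\log N)/\sqrt N)$ so that it can be absorbed into the noise term without spoiling the $N^{-1/4}$ rate. Everything else is either a careful application of the large-deviation estimate of Lemma~\ref{1/Nr} or a direct adaptation of the corresponding deterministic computation in Section~\ref{sec:2}.
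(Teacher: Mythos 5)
Your overall scheme is the same as the paper's: combine the stochastic entropy dissipation of Lemma~\ref{agentsentropy} with the log-Sobolev inequality of Proposition~\ref{preciselogsobolev}, use Lemma~\ref{Kn} for the exponential-moment hypothesis and Lemma~\ref{1/Nr} to remove the $(1-\bar r)$ weight, and close a scalar ODE. The difference — and the gap — is precisely at the juncture you flag as ``the most delicate point.''

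The paper never compares the shifted dissipation coming out of Lemma~\ref{agentsentropy} to the unshifted dissipation of $\{q_n\}$. Instead, it defines $\tilde q_n = (q_n + 1/N)\mathbbm{1}\{q_n>0\}$, renormalizes to a probability mass function $\bar q_n = \tilde q_n/\bar m$, and applies Proposition~\ref{preciselogsobolev} \emph{to $\bar q_n$}, whose dissipation (after normalization) is exactly what Lemma~\ref{agentsentropy} produces. This is why Proposition~\ref{preciselogsobolev} was stated with the flexibility $\tilde\mu := \sum n\,p_n$ allowed to differ from $\mu$ and with the explicit error term $\mathbb{E}|\mu-\tilde\mu|\,|\log \mathbb{E}|\mu-\tilde\mu||$: the $1/N$ shifts move the first moment, so $\tilde\mu\neq\mu$, and the paper estimates $\mathbb{E}|\mu-\tilde\mu|\leq C/\sqrt N$ and then compares $\mathbb{E}\sum\bar q_n\log(\bar q_n/p_n^*)$ with $\HH_N$ at the relative-entropy level. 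Your claim that ``the equality $\tilde\mu=\mu$ makes the $|\mu-\tilde\mu|$ correction vanish'' is true of $\{q_n\}$, but it is not what the proposition was built for — the $|\mu-\tilde\mu|$ term exists exactly because the intended input is $\bar q_n$, not $q_n$. By routing through $q_n$ you avoid using this term, but you now need the comparison $\tilde D_N^{\mathrm{ag}}\geq c\,\tilde D_N^{\mathrm{prop}}-C/\sqrt N$, which you assert but do not prove. This is a genuine gap: the $1/N$-shifts perturb the argument of the truncated logarithm $L$ by $O(1)$ multiplicative factors whenever $q_n$ or $q_{n+1}$ is of size $O(1/N)$, and although each such term is individually $O(1/N)$ and there are at most $O(\sqrt N)$ of them, one has to argue carefully that the difference is one-sided (or at least does not destroy the lower bound on the shifted dissipation by the unshifted one). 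The paper's route is cleaner precisely because it sidesteps any dissipation-versus-dissipation comparison and pays instead with an estimable $\mathbb{E}|\mu - \tilde\mu|$ term.

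A secondary issue: the truncated function $L$ was introduced to make the dissipation well-defined when one of $q_n,q_{n+1}$ vanishes, and the restricted sum ($q_m>0, q_{m+1}>0$) appears explicitly in the derivation inside Lemma~\ref{agentsentropy}'s proof. If you wish to apply Proposition~\ref{preciselogsobolev} directly to $\{q_n\}$ (which contains many exact zeros), you should check that its proof — in particular the pointwise inequality $(a-b)^2/(a+b)\leq (a-b)L(a/b)$ and the telescoping identity from Lemma~\ref{lem1} — remains valid in the presence of zeros; the paper avoids this by working with $\tilde q_n$, for which the indicator explicitly encodes the restricted sum. The remainder of your argument (the large-deviation stripping of $(1-\bar r)$ and the ODE closure) matches the paper's.
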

\begin{proof}
We introduce
\[H(t)= \mathbb{E}\,\sum_{n\geq 0} q_n(t)\,\log \frac{q_n(t)}{p_n^*}\]
and
\[\tilde q_n=\left(q_n+\frac{1}{N}\right)\,\mathbbm{1}\{q_n>0\},\quad \tilde q_n=0\ \mbox{if}\ q_n=0. \]
We notice that a slight variant of Lemma~\ref{agentsentropy} enables us to deduce that
  \[
  \begin{split}
    \frac{\dd}{\dd t} H(t)\leq &-\mathbb{E}\sum_{n\geq 0} (\bar r\,\tilde q_n-\tilde q_{n+1})\,L\left(\frac{\bar r\,\tilde q_n}{\tilde q_{n+1}}\right)+C\,\frac{\log N}{\sqrt{N}}.
  \end{split}
  \]
  We now need to normalize the $\tilde q_n$ so that they define a probability distribution, hence we introduce
  \[
\bar q_n=\frac{\tilde q_n}{\bar m},\quad \bar m=\sum_{n\geq 0} \tilde q_n.
\]
We note that a variant of \eqref{inter2} shows that
\[
|\bar m-1|\leq \frac{C}{\sqrt{N}},
\]
almost surely, for a deterministic $C$ depending only on $\mu$.

We can easily renormalize and obtain
 \[
  \begin{split}
    \frac{\dd}{\dd t} H(t)&\leq -\mathbb{E}\,\bar m\,\sum_{n\geq 0} (\bar r\,\bar q_n-\bar q_{n+1})\,L\left(\frac{\bar r\,\bar q_n}{\bar q_{n+1}}\right)+C\,\frac{\log N}{\sqrt{N}}\\
    &\leq -\frac{1}{2}\,\mathbb{E}\,\sum_{n\geq 0} (\bar r\,\bar q_n-\bar q_{n+1})\,L\left(\frac{\bar r\,\bar q_n}{\bar q_{n+1}}\right)+C\,\frac{\log N}{\sqrt{N}},
\end{split}
  \]
  since $L$ is non-decreasing.

  Denote $\tilde \mu=\sum_{n\geq 0}n\,\bar q_n$. Observe that 
  \[
\tilde\mu=\frac{1}{\bar m}\,\left(\sum_n n\,q_n+\sum_{n,\,q_n>0} \frac{n}{N}\right)=\frac{1}{\bar m}\,\left(\mu+\sum_{n,\,q_n>0} \frac{n}{N}\right).
\]
With a calculation similar to \eqref{inter2}, we find that
\[
\mathbb{E}\,\sum_{n,\,q_n>0} \frac{n}{N}\leq \mathbb{E}\,\sum_{n\leq N^{1/4}} \frac{n}{N}+\mathbb{E}\sum_{n\geq N^{1/4}} n\,q_n\leq \frac{1}{\sqrt{N}}+\frac{C\,N^{1/4}}{K^{N^{1/4}}}\,\mathbb{E}\sum_{n\geq 0} K^n\,q_n\leq \frac{C}{\sqrt{N}},
\]
by Lemma~\ref{Kn}.

This implies that
\[
\mathbb{E}\,|\mu-\tilde\mu|\leq \frac{C}{\sqrt{N}}.
\]
We may now apply Proposition~\ref{preciselogsobolev} combined again with the exponential moment estimate from Lemma~\ref{Kn} to find that
  \[
\frac{\dd}{\dd t} H(t)\leq -\frac{1}{C}\,\frac{\tilde H^2}{(\log \tilde H)^2}+C\,\frac{\log N}{\sqrt{N}}
\]
for
\[
\tilde H=\mathbb{E}\,(1-\bar r)\,\sum_n \bar q_n\,\log\frac{\bar q_n}{p_n^*}-\frac{C}{\sqrt{N}},
\]
and as long as $\tilde H\geq 0$.

Remark that
\[
\begin{split}
\mathbb{E}\,(1-\bar r)\,\sum_n \bar q_n\,\log\frac{\bar q_n}{p_n^*}&\geq (1-r_0)\,
\mathbb{E}\,\sum_n \bar q_n\,\log\frac{\bar q_n}{p_n^*}-\mathbb{E}\,\tilde\mu\,\mathbbm{1}_{\bar r>r_0}\\
&\geq (1-r_0)\,
\mathbb{E}\,\sum_n \bar q_n\,\log\frac{\bar q_n}{p_n^*}-C\,\expo^{-N/\lambda},
\end{split}
\]
by Lemma~\ref{1/Nr}.

Similar estimates yield that
\[
\mathbb{E}\,\sum_n \bar q_n\,\log\frac{\bar q_n}{p_n^*}\geq \mathbb{E}\,\sum_n q_n\,\log\frac{q_n}{p_n^*}-\frac{C}{\sqrt{N}}.
\]
Finally we deduce that
  \[
\frac{\dd}{\dd t} H(t)\leq -\frac{1}{C}\,\frac{(H-C/\sqrt{N})^2}{(\log (H-C/\sqrt{N}))^2}+C\,\frac{\log N}{\sqrt{N}},
\]
from which we conclude that
\[H(t)\leq C\,\frac{\log N}{N^{1/4}}+\frac{C}{1+t}.\]
Thus the proof is completed.
\end{proof}
Now we ready to prove the uniform (in time) propagation of chaos results (i.e, Theorem \ref{thm4}), which is restated here for the reader's convenience.

\addtocounter{theorem}{-5}%
\begingroup
\begin{theorem}
For all $t > 0$ and all $N \geq 1$, we have that
\begin{equation}\label{eq:L1chaos_restate}
\mathbb{E}\,\|{\bf q}(t) - {\bf p}(t)\|^2_{\ell^1} \leq \frac{C}{1+\log N},
\end{equation}
in which ${\bf p}(t)=\{p_n(t)\}_{n\geq 0}$ is the unique solution of ODE system \eqref{eq:law_limit}. Furthermore, we have the entropic uniform in time propagation of chaos, i.e.,
\begin{equation}\label{eq:entropic_chaos_restate}
\mathbb{E}\,\sum_n q_n(t)\,\log\frac{q_n(t)}{p_n(t)}\leq C\,\frac{\log \log N}{1+\log N}.
\end{equation}
\end{theorem}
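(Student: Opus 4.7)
The plan is to derive both bounds from the two relative-entropy estimates already in hand: Theorem~\ref{thm3} gives $\mathbb{E}\,\DD_{\KL}({\bf q}(t)~\|~{\bf p}^*)\le C\,\log N/N^{1/4}+C/(1+t)$, and Theorem~\ref{thm2} gives $\DD_{\KL}({\bf p}(t)~\|~{\bf p}^*)\le C\,\expo^{-C\sqrt t}$. Combined with the Csiszár-Kullback-Pinsker inequality and the triangle inequality in $\ell^1$, these will yield \eqref{eq:L1chaos_restate}; a short reversed-CKP interpolation based on the exponential moment bounds of Lemma~\ref{Kn} and Corollary~\ref{cor1} will then upgrade the $\ell^1$ estimate to the entropic bound \eqref{eq:entropic_chaos_restate}.

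Concretely, CKP applied to each KL input gives $\mathbb{E}\|{\bf q}(t)-{\bf p}^*\|_{\ell^1}^2\le C\log N/N^{1/4}+C/(1+t)$ and $\|{\bf p}(t)-{\bf p}^*\|_{\ell^1}^2\le C\expo^{-C\sqrt t}$; the triangle inequality $\|{\bf q}-{\bf p}\|_{\ell^1}^2\le 2\|{\bf q}-{\bf p}^*\|_{\ell^1}^2+2\|{\bf p}^*-{\bf p}\|_{\ell^1}^2$ then produces
\[
\mathbb{E}\|{\bf q}(t)-{\bf p}(t)\|_{\ell^1}^2 \le C\left(\frac{\log N}{N^{1/4}}+\frac{1}{1+t}+\expo^{-C\sqrt t}\right).
\]
Taking the threshold $t_\star:=c\log N$ with $c>0$ small enough makes the right-hand side $O(1/\log N)$ for all $t\ge t_\star$. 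For the short-time regime $t\le t_\star$ I would invoke a classical finite-time propagation of chaos (available for this model from \cite{cao_derivation_2021,graham_rate_2009,merle_cutoff_2019}, or derived directly via a Grönwall argument on $\mathbb{E}\|{\bf q}-{\bf p}\|_{\ell^1}^2$ using \eqref{eq:weak_formulation}), which yields a rate of order $\expo^{Ct}/N$; at $t=t_\star$ this is $O(N^{cC-1})$ and therefore dominated by $1/\log N$ once $c$ is chosen sufficiently small.

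For the entropic bound, the plan is to establish a deterministic reversed-CKP interpolation of the form
\[
\DD_{\KL}({\bf q}~\|~{\bf p}) \le C\,\|{\bf q}-{\bf p}\|_{\ell^1}^2\,\bigl|\log\|{\bf q}-{\bf p}\|_{\ell^1}^2\bigr|,
\]
valid whenever ${\bf q}$ and ${\bf p}$ share comparable exponential moments. I would prove this by splitting the KL sum at a cutoff $n=N_0$: on the head $n\le N_0$, a Taylor expansion of $x\log x$ together with the lower bound $p_n\gtrsim (r^*)^{N_0}$ (ensured by Theorem~\ref{thm2}) gives a head contribution of size $(1/r^*)^{N_0}\,\|{\bf q}-{\bf p}\|_{\ell^1}^2$, while on the tail $n>N_0$ the exponential moments from Lemma~\ref{Kn} and Corollary~\ref{cor1} yield a contribution of size $K^{-N_0}$; balancing at $N_0\approx c\,\bigl|\log\|{\bf q}-{\bf p}\|_{\ell^1}^2\bigr|$ produces the displayed inequality. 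Taking expectation and applying Jensen's inequality to the concave map $x\mapsto x\,|\log x|$ to push $\mathbb{E}$ inside the logarithm then yields $\mathbb{E}\,\DD_{\KL}({\bf q}~\|~{\bf p})\le C\,\log\log N/(1+\log N)$, as claimed.

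The main obstacle is the reversed-CKP step: standard Pinsker-type inequalities go the wrong way, so the cutoff $N_0$ must be calibrated carefully so that the exponentially small tail $K^{-N_0}$ compensates the potentially large prefactor $(1/r^*)^{N_0}$ coming from the head; this balance is exactly what generates the extra $\log\log N$ factor rather than a universal constant. A secondary subtlety is that the entropy dissipation of Section~\ref{subsec:3.1} provides no smallness for small times, which is why the short-time regime in the $\ell^1$ bound has to be handled separately.
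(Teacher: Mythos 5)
Your treatment of the $\ell^1$ estimate \eqref{eq:L1chaos_restate} matches the paper exactly: combine the Csisz\'{a}r--Kullback--Pinsker inequality with Theorems~\ref{thm2} and~\ref{thm3} and a triangle inequality to get a bound of the form $\expo^{-C\sqrt t}+\log N/N^{1/4}+1/(1+t)$, switch to the finite-time $\expo^{Ct}/N$ bound for $t\le c\log N$, and splice the two regimes at $t\sim\log N$. That part is sound.

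The gap is in your ``reversed CKP'' step for the entropic bound. You split the relative entropy at a \emph{position} cutoff $n=N_0$, and claim a head contribution of size $(1/r^*)^{N_0}\,\|{\bf q}-{\bf p}\|_{\ell^1}^2$ and a tail of size $K^{-N_0}$, to be balanced at $N_0\sim c\,|\log\|{\bf q}-{\bf p}\|_{\ell^1}^2|$. But the head prefactor $(1/r^*)^{N_0}$ is \emph{geometric} in $N_0$, exactly like the tail factor $K^{-N_0}$. Writing $X=\|{\bf q}-{\bf p}\|_{\ell^1}^2$ and equating $(1/r^*)^{N_0}X$ with $K^{-N_0}$ gives $N_0=\log(1/X)/\log(K/r^*)$, and both terms then equal $X^{\beta}$ with $\beta=\log K/\log(K/r^*)<1$. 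So the balance produces a strict \emph{power loss} $X^\beta$, not the stated $X\,|\log X|$, and after plugging in $X\lesssim 1/\log N$ you would only get $(1/\log N)^{\beta}$ with $\beta<1$, which is weaker than the claimed $\log\log N/(1+\log N)$. Even if you replace the crude Taylor/lower-bound argument on the head by the sharper $a\log(a/b)+b-a\le C\,|\log(a/b)|\,\frac{(a-b)^2}{a+b}\le C\,|\log(a/b)|\,|a-b|$, you only get a head of size $N_0\,\|{\bf q}-{\bf p}\|_{\ell^1}$, which is \emph{linear} in the $\ell^1$ distance and again does not match the quadratic $X|\log X|$ you assert; pushed through, that route would give at best $\log\log N/\sqrt{1+\log N}$.

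The paper's actual decomposition cuts on the \emph{ratio} $q_n/p_n$ with a threshold $M$, not on the position $n$: on $\{M^{-1}p_n\le q_n\le Mp_n\}$ the local inequality $a\log(a/b)+b-a\le C\,\log M\,\frac{(a-b)^2}{a+b}\le C\,\log M\,|a-b|$ immediately yields a contribution $\lesssim\log M\,\mathbb{E}\|{\bf q}-{\bf p}\|_{\ell^1}$ with no exponentially growing prefactor, while the two extreme regions $\{q_n>Mp_n\}$ and $\{q_n<M^{-1}p_n\}$ are bounded using the moment estimates by $C/M$ and $C\log M/M$ respectively. Optimizing in $M\sim\log N$ then delivers the $\log\log N/(1+\log N)$ rate. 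The reason the ratio cut works and the position cut does not is precisely that the quadratic approximation of $a\log(a/b)+b-a$ is governed by $|\log(a/b)|$, not by $1/b$: by cutting in the ratio you pay only a $\log M$ penalty on the ``good'' region, rather than the $p_n^{-1}\gtrsim(1/r^*)^{N_0}$ penalty your position cut incurs.

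Two smaller remarks. First, your Jensen step (pushing $\mathbb{E}$ into $x\mapsto x|\log x|$) is fine since that map is concave near zero, but it is only needed once the pointwise inequality has the correct form. Second, you correctly flag that the short-time regime has to be handled separately; that part of the plan agrees with the paper.
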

\endgroup

\begin{proof}
First, we note that a finite time propagation of chaos proved for instance in \cite{cao_derivation_2021} and \cite{merle_cutoff_2019} ensures that
\begin{equation}\label{eq:finite-time-L1-chaos}
\mathbb{E}\,\|{\bf q}(t) - {\bf p}(t)\|^2_{\ell^1} \leq C\,\frac{\expo^{C\,t}}{N} \quad \forall\, t \in (0,\infty)
\end{equation}
for some $C > 0$. By combining Theorem \ref{thm2}, Theorem \ref{thm3} and the celebrated Csisz\'{a}r-Kullback-Pinsker inequality, we also have
\begin{equation}\label{eq:triangle_inequality}
\mathbb{E}\,\|{\bf q}(t) - {\bf p}(t)\|^2_{\ell^1} \leq C\,\expo^{-C\,\sqrt{t}} + C\,\frac{\log N}{N^{1/4}}+\frac{C}{1+t} \quad \forall\, t > 0.
\end{equation}
Now the advertised bound \eqref{eq:L1chaos} follows by taking $T = \frac{\log N}{2\,C}$ and observing that
\begin{equation}
\mathbb{E}\,\|{\bf q}(t) - {\bf p}(t)\|^2_{\ell^1} \leq
    \begin{cases}
        \frac{C}{1+\log N} & \text{if } t \geq T,\\
        \frac{C}{\sqrt{N}} & \text{if } t \leq T.
    \end{cases}
\end{equation}
A straightforward interpolation argument then allows us to derive the relative entropy estimates as well. For example, one may write
\[
\begin{split}
  \sum_n q_n(t)\,\log\frac{q_n(t)}{p_n(t)}&=\sum_n \left(q_n(t)\,\log\frac{q_n(t)}{p_n(t)}+p_n(t)-q_n(t)\right)\\
  &\leq \sum_{n,\,M^{-1}\,p_n\leq q_n\leq M\,p_n} \left(q_n(t)\,\log\frac{q_n(t)}{p_n(t)}+p_n(t)-q_n(t)\right)\\
  &+\sum_{n,q_n>M\,p_n} q_n\, (1+n)+\frac{\log M}{M}\,\sum_{n} p_n.
\end{split}
\]
We have that
\[
\begin{split}
 & \mathbb{E}\,\sum_{n,\,M^{-1}\,p_n\leq q_n\leq M\,p_n} \left(q_n(t)\,\log\frac{q_n(t)}{p_n(t)}+p_n(t)-q_n(t)\right)\leq \log M\,\mathbb{E}\,\sum_{n} |p_n-q_n|^2\\
  &\qquad\leq 2\,\log M\,\mathbb{E}\,\sum_{n} |p_n-q_n|\leq \frac{C\,\log M}{1+\log N}.
\end{split}
\]
On the other hand,
\[
\sum_{n,q_n>M\,p_n} q_n\, (1+n)\leq \frac{1}{M}\,\sum_n p_n\,(1+n)\leq \frac{C}{M}.
\]
Hence
\[
\mathbb{E}\,\sum_n q_n(t)\,\log\frac{q_n(t)}{p_n(t)}\leq \frac{C\,\log M}{1+\log N}+\frac{C\,\log M}{M},
\]
which gives rise to \eqref{eq:entropic_chaos} by optimizing in the choice of $M$.
\end{proof}

\section{Conclusion}
\setcounter{equation}{0}

In this manuscript, an agent based model (known as the unbiased exchange model in \cite{cao_derivation_2021} and the one-coin model in \cite{lanchier_rigorous_2017}) for wealth (re-)distribution introduced in the seminar work \cite{dragulescu_statistical_2000} is studied. We rigorously proved a uniform in time propagation of chaos result for this model which is not available in the literature prior to the present work, based on a detailed analysis of dissipation of the relative entropy at the level of the $N$-agent system. Meanwhile, precise entropy-entropy dissipation arguments have also been carried out at the level of the limiting ODE system, which lead us to a quantitative almost-exponential rate of convergence toward a geometric equilibrium distribution of money. It would also be interesting to examine variants of this model, for instance the so-called poor-biased and rich-biased exchange model investigated in \cite{cao_derivation_2021}, where agents are picked to give a dollar at a rate based on the amount of dollars he/she has. One possible follow-up work would be to establish a uniform in time propagation of chaos result for these closely related models. As of now, one exciting direction of research for econophysics models involves the inclusion of a (central) bank and hence the possibility of agents being in debt (see for instance \cite{lanchier_rigorous_2018-1}). As the inclusion of bank and debt will also make the econophysics models more realistic from a modeling perspective, we plan to extend the framework and analysis of the present work to other econophysics models with bank and debt in our future work.

\end{document}